\documentclass[a4paper,11pt]{amsart}
\usepackage{color}
\usepackage{amsmath,amscd,amsthm,upref}
\usepackage[psamsfonts]{amssymb}
\usepackage[all]{xy}
\usepackage{enumerate}


\newtheorem{thm}{Theorem}[section]
\newtheorem{crl}[thm]{Corollary}
\newtheorem{lmm}[thm]{Lemma}
\newtheorem{prp}[thm]{Proposition}
\theoremstyle{definition}
\newtheorem{dfn}[thm]{Definition}

\theoremstyle{remark}
\newtheorem*{rmk}{Remark}

\DeclareMathOperator{\Diff}{\bf Diff}
\DeclareMathOperator{\Top}{\bf Top}

\newcommand{\bI}{\partial I}
\newcommand{\abs}[1]{\lvert{#1}\rvert}

\title{Homotopy structures of smooth CW complexes}
\author[T. Haraguchi]{Tadayuki Haraguchi} 
\address{Faculty of Education for Human Growth, Nara Gakuen University, Nara 636-8503, Japan}
\email{t-haraguchi@naragakuen-u.jp}
\date{\today}
\subjclass[2010]{Primary 18G55 ; Secondary 55P05, 55P10, 55Q05}
\keywords{diffeological space, smooth CW complex, homotopy extension property, cellular approximation theorem, Whitehead theorem}
\begin{document}
\maketitle
\begin{abstract}
In this paper we present the notion of smooth CW complexes given by attaching cubes on the category of diffeological spaces,
and we study their smooth homotopy structures related to the  homotopy extension property.
\end{abstract}
\tableofcontents
%
%
%
%
%
%
%
%
\section{Introduction}
The homotopy theory has a long history.
In topology,
a CW complex is a type of topological space introduced by J.~H.~C Whitehead to meet the needs of homotopy theory (cf.~\cite{JHC0}).
To develop homotopy theory,
Quillen gives,
so called,
the Quillen model structure on the category $\bf Top$ of topological spaces which represents 
the standard homotopy theory of CW complexes (cf.~\cite{Qui67}).
Model categories have been constructed in many fields to solve various ploblems having homotopical means.
We present the Quillen model structure on the category $\Diff$ of diffeological spaces which is Quillen equivalent to 
the model structure above on $\Top$ by constructing smooth homotopy theory on homotopy groups and cubical cell complexes 
(cf.~{\cite[Theorem 5.6 and Theorem 6.2]{HS}}).
Cubical cell complexes are cofibrant objects which represent the behavior of our model structure on $\Diff$ (cf.~{\cite[\S6]{HS}}).
In this article,
we give smooth CW complexes as a class of cubical cell complexes and introduce their smooth homotopy theory related to the homotopy extension property.
%

In topology,
the property of retractions from the cube $I^{n}$ to the horn $J^{n-1}$ develops the homotopy theory.
However,
in diffeology,
due to the difficulty finding a smooth retraction $I^{n} \to J^{n-1}$,
we must retrieve most of the ingredients of homotopy theory without relying such a strict retraction.
To solve this ploblem,
In \cite[\S3]{HS},
we introduce the notion of \textit{tameness} into smooth maps from $I^{n}$ to a diffeological space $X$.
A tame map $I^{n} \to X$ is locally constant at all vertexes (see Definition \ref{dfn:definition of tame maps}).
By approximate retractions $I^{n} \to J^{n-1}$,
any tame map $J^{n-1} \to X$ can be extended to a smooth map $I^{n} \to X$  (see Lemma \ref{lmm:approximate retraction exists} and Proposition \ref{extend of J}).
Using this property,
we discuss our homotopy structures on smooth CW complexes.

This paper is organized as follows.
Section 2 review the basic properties of tameness,
approximate retractions and smooth homotopy groups.
Section 3 introduces the \textit{final diffeology} defined by functions.
The colimits in $\Diff$ are always equipped with the final diffeology defined by natural maps (cf.~\cite{DGE}). 
Here we study the final diffeology of the product of pushouts.
In section 4,
we give smooth CW complexes (see Definition \ref{dfn:definition of smooth CW complexes})
equipped with the final diffeology defined by characteristic maps of the cubes,
and we study their basic structures (see Proposition \ref{prp:product of CW},
\ref{prp:coproduct of CW complexes} and \ref{prp:quotient of CW complexes}).
Furthermore we introduce smooth CW complexes equipped with tameness to discuss our smooth homotopy theory.
Such smooth CW complexes are called \textit{gathered}.
Any diffeological space has the natural topology called the $D$-topology (cf.~\cite{Zem85}).
Our interest shifts to the behavior of the $D$-topology for smooth CW complexes.
The topological spaces which arise as the $D$-topology of smooth CW complexes are 
paracompactum CW complexes equipped with the weak topology (see Proposition \ref{topological CW complexes} and \ref{the weak topology induced smooth CW complexes}).
Furthermore we have the Whitney approximation property on gathered CW complexe,
that is,
any continuous map between gathererd CW complexes are continuously homotopic to some smooth map (see Theorem \ref{thm:Whitney approximation property}).
In Section 5 we give the homotopy extension property (see Lemma \ref{lmm:HEP}),
and develop this property further by introducing the notions of $N$-\textit{connected} and $N$-\textit{equivalence}
(Definition \ref{dfn:N-connected} and \ref{dfn:weak homotopy equivalence}).
As their applications,
in Section 6-7 we prove the following main results.
\begin{enumerate}
\item
A relative gathered CW complex has the homotopy extension lifting property (see Theorem \ref{HELP}).
\item
Let $\phi \colon (X;A_{1},A_{2}) \to (Y;B_{1},B_{2})$ be a smooth map of excive triads such that the restrictions 
\[
\phi \colon A_{1} \to B_{1},\ \phi \colon A_{2} \to B_{2} \ \mbox{and} \ \phi \colon A_{1} \cap A_{2} \to B_{1} \cap B_{2}
\]
are weak homotopy equivalences.
Then $\phi \colon X \to Y$ is a weak homotopy equivalence (see Theorem \ref{thm:extends weak}).
\item
We have the Whitehead theorem on gathered CW complexes,
that is,
a weak homotopy equivalence between gathered CW complexes is a homotopy equivalence (see Teorem \ref{thm:Whitehead}).
\item
A smooth map between gathered CW complexes is homotopic to some cellular map (see Theorem \ref{thm:cellular approximation}).
\end{enumerate}
%
%
%
%
\section{Preliminaries}\label{the basic notion of diffeology}
In this section we introduce the notions of \emph{tameness} and \emph{homotopy groups}
to study a smooth homotopy 
theory of diffeological spaces.
All the material in this section are described in \cite[Section 3]{HS}.
The fundamental properties of diffeological spaces can be found in the standard textbook \cite{Zem}.
The class of diffeological spaces together with smooth maps form a category $\bf Diff$ which is complete,
cocomplete and cartesian closed (cf.~\cite{Sou80}).

Throughout the paper,
Let $\mathbf{R}^{n}$ be the $n$-dimensional Euclidean space equipped with the standard diffeology consisting of all smooth parametrizations of
$\mathbf{R}^{n}$,
and let $I^{n}$ be the $n$-dimensional cube $I^{n}$ equipped with the subspace diffeology.
Denote by $\partial I^{n}$ the boundary of $I^{n}$,
and let
\[
L^{n-1}= \partial I^{n-1} \times I \cup I^{n-1} \times \{0\}\ \mbox{and}\
J^{n-1}= \partial I^{n-1} \times I \cup I^{n-1} \times \{1\}
\]
for each $n \geq 1$.
We regard $\partial I^{n},\ L^{n-1}$ and $J^{n-1}$ as subspaces of $\mathbf{R}^{n}$.
Suppose $f_{0},f_{1} \colon X \to Y$ are smooth maps between diffeological spaces.
We say that $f_{0}$ and $f_{1}$ are homotopic,
written $f_{0} \simeq f_{1}$,
if there exists a smooth map $F \colon X \times I \to Y$ such that $F(x,0)=f_{0}(x)$ and $F(x,1)=f_{1}(x)$ hold.
Such a smooth map $F$ is called a homotopy between $f_{0}$ and $f_{1}$.
Homotopy is an equivalence relation.
The resulting equivalent classes are called homotopy classes.
Moreover,
$f_{0}$ and $f_{1}$ are called homotopic relative to a subspace $A$ of $X$,
written $f_{0} \simeq f_{1} \ \mbox{rel} \ A$,
if there exists a homotopy $F \colon X \times I \to Y$ between
$f_{0}$ and $f_{1}$ such that $F(a,t)=f_{0}(a)=f_{1}(a)$ holods for all $a \in A$ and for all $t \in I$.
A smooth map $f \colon X \to Y$ is called homotopy equivalence if there exists a smooth map $g \colon Y \to X$ satisfying
\[
g \circ f \simeq 1 \colon X \to X,\ f \circ g \simeq 1_{Y} \colon Y \to Y.
\]
We say that $X$ and $Y$ are homotopy equivalent,
written $X \simeq Y$,
if there exists a homotopy equivalence $f \colon X \to Y$.
A subspace $A$ is called a retract of $X$ 
if there exists a smooth map $\gamma \colon X \to A$ such that $\gamma(a)=a$ for all $a \in A$.
Such a smooth map $\gamma \colon X \to A$ is called a retraction.
We say that $A$ is a deformation retract of $X$ if there exists a retraction $\gamma \colon X \to A$ such that the composition $i \circ \gamma$ and the identity $1_{X} \colon X \to X$
are homotopic relative to $A$,
where $i \colon A \to X$ is the inclusion.

In the case of topological spaces,
the fact that $J^{n-1}$ is a retract of $I^{n}$ is crucial for developing homotopy theory.
(Compare e.g. homotopy exact sequence and homotopy extension property.)
Unfortunately,
it is not easy to construct a smooth retraction from $I^{n}$ to $J^{n-1}$.
Thus
we discuss our smooth homotopy theory by using the following smooth map which is locally constant at each vertex.
\begin{dfn}[{\cite[Definition 3.8]{HS}}]\label{dfn:definition of tame maps}
Let $f \colon K \to X$ be a smooth map from a cubical subcomplex $K$
of $I^n$ (e.g.\ $I^n$, $\bI^n$, $L^{n-1}$ or $J^{n-1}$) to a diffeological
space $X$.  Suppose $0 < \epsilon \leq 1/2$ and $\alpha \in \{0,1\}$.
Then $f$ is called to
be \emph{$\epsilon$-tame} if we have
\[
f(t_1,\cdots,t_{j-1},t_j,t_{j+1},\cdots,t_n) =
f(t_1,\cdots,t_{j-1},\alpha,t_{j+1},\cdots,t_n)
\]
for every $(t_1,\cdots,t_n) \in K$ such that $\abs{t_j - \alpha} \leq \epsilon$ holds.
Moreover,
a homotopy $F \colon X \times I \to Y$ is called to be $\epsilon$-tame if we have $F(x,t)=F(x,\alpha)$ for all $x \in X,\ t\in I$ such that
$| t - \alpha | \leq \epsilon $ holods.
We use the abbreviation ``tame'' to mean $\epsilon$-tame for some
$\epsilon > 0$.
\end{dfn}
We shall introduce that any tame map defined on $J^{n-1}$ is extendable over $I^{n}$.
To see this we need some preparations.
For $0 < \epsilon \leq 1/2$, we denote
$I^n(\epsilon) = [\epsilon,1-\epsilon]^n$ and call it the
\emph{$\epsilon$-chamber} of $I^n$.
More generally, if $K$ is a cubical subcomplex of $I^n$ then its
$\epsilon$-chamber $K(\epsilon)$ is defined to be the union of
$\epsilon$-chambers of its maximal faces.
Thus we have $\bI^n(\epsilon) = \bigcup F(\epsilon)$, where $F$ runs
through the $(n-1)$-dimensional faces of $I^n$, and
$J^{n-1}(\epsilon) = \bI^n(\epsilon) \cap J^{n-1}$.
\begin{dfn}
A smooth map $I^{n} \to J^{n-1}$ is called an $\epsilon$-\emph{approximate retraction} 
if it restricts to the identity on the $\epsilon$-chanber $J^{n-1}(\epsilon)$.
\end{dfn}
To get an approximate retraction,
we construct the following tame map.
\begin{lmm}[{\cite[Lemma 3.10]{HS}}]
  \label{lmm:modified smash function}
  Suppose $0 \leq \sigma < \tau \leq 1/2$.  Then there exists a
  non-decreasing smooth function $T_{\sigma,\tau} \colon \mathbf{R} \to I$
  satisfying the following conditions:
  \begin{enumerate}
  \item $T_{\sigma,\tau}(t) = 0$ for $t \leq \sigma$,
  \item $T_{\sigma,\tau}(t) = t$ for $\tau \leq t \leq 1 - \tau$,
  \item $T_{\sigma,\tau}(t) = 1$ for $1 - \sigma \leq t$, and
  \item $T_{\sigma,\tau}(1 - t) = 1 - T_{\sigma,\tau}(t)$
    for all $t$.
  \end{enumerate}
\end{lmm}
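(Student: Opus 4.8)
The plan is to build $T_{\sigma,\tau}$ by first prescribing its derivative and then integrating, since all four conditions are naturally expressed in terms of the slope. First I would construct a smooth nonnegative function $\rho \colon \mathbf{R} \to \mathbf{R}$, to play the role of $T'_{\sigma,\tau}$, subject to the requirements that $\rho$ vanish on $(-\infty,\sigma]$ and on $[1-\sigma,\infty)$, that $\rho \equiv 1$ on $[\tau,1-\tau]$, that $\rho$ be symmetric in the sense $\rho(t)=\rho(1-t)$, and that $\int_{\sigma}^{\tau}\rho(s)\,ds=\tau$. On $[\sigma,\tau]$ (and symmetrically on $[1-\tau,1-\sigma]$) the function $\rho$ is a smooth bump that rises from $0$ to $1$ with infinite-order (flat) contact at both ends, so that it glues $C^{\infty}$-smoothly to the constant values $0$ and $1$. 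The standard bump $\lambda(t)=e^{-1/t}$ for $t>0$ and $\lambda(t)=0$ for $t\leq 0$ supplies such flat transitions, and a symmetric choice yields $\rho(t)=\rho(1-t)$.

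Once $\rho$ is in hand, I would set
\[
T_{\sigma,\tau}(t)=\int_{-\infty}^{t}\rho(s)\,ds .
\]
Then $T_{\sigma,\tau}$ is smooth with $T'_{\sigma,\tau}=\rho$, and it is non-decreasing because $\rho\geq 0$. Conditions (1) and (3) are immediate from the support of $\rho$ together with the normalization
\[
\int_{-\infty}^{\infty}\rho(s)\,ds=\int_{\sigma}^{\tau}\rho(s)\,ds+\int_{\tau}^{1-\tau}1\,ds+\int_{1-\tau}^{1-\sigma}\rho(s)\,ds=\tau+(1-2\tau)+\tau=1,
\]
which also shows $0\leq T_{\sigma,\tau}\leq 1$. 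For (2), if $\tau\leq t\leq 1-\tau$ then $T_{\sigma,\tau}(t)=\int_{\sigma}^{\tau}\rho(s)\,ds+\int_{\tau}^{t}1\,ds=\tau+(t-\tau)=t$. For (4), the substitution $s\mapsto 1-s$ together with $\rho(1-s)=\rho(s)$ gives $1-T_{\sigma,\tau}(t)=\int_{t}^{\infty}\rho(s)\,ds=\int_{-\infty}^{1-t}\rho(s)\,ds=T_{\sigma,\tau}(1-t)$.

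The main obstacle is the area constraint $\int_{\sigma}^{\tau}\rho(s)\,ds=\tau$. Since $\tau-\sigma<\tau$, a monotone transition with $0\leq\rho\leq 1$ can contribute at most $\tau-\sigma<\tau$ to this integral, so $\rho$ must overshoot above the value $1$ somewhere inside $(\sigma,\tau)$ while still returning to $1$ with flat contact at $\tau$. To secure exactly the value $\tau$ I would introduce a one-parameter family $\rho_{h}$ of such bumps carrying an adjustable plateau of height $h\geq 1$ in the interior of $[\sigma,\tau]$; the integral $\int_{\sigma}^{\tau}\rho_{h}(s)\,ds$ depends continuously on $h$, is strictly smaller than $\tau$ when $h=1$, and tends to $\infty$ as $h\to\infty$, so the intermediate value theorem yields a height $h^{*}$ with $\int_{\sigma}^{\tau}\rho_{h^{*}}(s)\,ds=\tau$. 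Taking this $\rho_{h^{*}}$, suitably symmetrized, as $\rho$ completes the construction, and everything else reduces to the routine verification above.
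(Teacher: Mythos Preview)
The paper does not actually prove this lemma; it is quoted verbatim from \cite[Lemma~3.10]{HS} and no argument is supplied here, so there is nothing to compare your proposal against within this paper.

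That said, your strategy is sound and would constitute a valid proof. Integrating a prescribed nonnegative derivative $\rho$ with the stated support, symmetry, and area conditions immediately yields (1)--(4), and the intermediate-value argument on the plateau height $h$ correctly produces the area $\int_{\sigma}^{\tau}\rho=\tau$. One small imprecision: you justify the need for overshoot by writing ``$\tau-\sigma<\tau$'', which fails when $\sigma=0$ (the lemma allows $\sigma=0$). The conclusion still holds in that case because $\rho_{1}$ vanishes to infinite order at $\sigma$, so $\int_{\sigma}^{\tau}\rho_{1}<\tau-\sigma\leq\tau$ regardless; you should phrase it that way. A slightly quicker alternative, avoiding the IVT step entirely, is to set $T_{\sigma,\tau}(t)=t\cdot\psi\bigl((t-\sigma)/(\tau-\sigma)\bigr)$ for $t\leq 1/2$ (where $\psi$ is a standard smooth nondecreasing cutoff with $\psi\equiv 0$ on $(-\infty,0]$ and $\psi\equiv 1$ on $[1,\infty)$) and extend by the symmetry (4); monotonicity then follows because both summands in the derivative are nonnegative on $[0,1/2]$.
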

Let $T^{n}_{\sigma, \tau} \colon \mathbf{R}^{n} \to I^{n}$ denote the $\sigma$-tame map given by
\[
T^{n}_{\sigma, \tau}(t_{1}, \cdots, t_{n})=\left(T_{\sigma, \tau}(t_{1}), \cdots, T_{\sigma, \tau}(t_{n})\right).
\]
\begin{lmm}[{\cite[Lemma 3.12]{HS}}]
  \label{lmm:approximate retraction exists}
  For any real number $\epsilon$ with $0 < \epsilon < 1/2$, there
  exists an $\epsilon$-approximate retraction
  $R_{\epsilon}^{n} \colon I^n \to J^{n-1}$.
\end{lmm}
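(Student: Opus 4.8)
The plan is to realize $R^{n}_{\epsilon}$ as a \emph{shadow projection} onto $J^{n-1}$ that has been smoothed near the lower faces of $J^{n-1}$ using the tame cutoff functions of Lemma \ref{lmm:modified smash function}. First I would fix a source point $p = (1/2,\dots,1/2,-a)$ with $a > 0$, lying strictly below the center of the omitted bottom face $I^{n-1}\times\{0\}$. For $q \in I^{n}$ the ray $\lambda \mapsto p + \lambda(q-p)$ has strictly increasing last coordinate, so it re-enters the boundary of the cube only through the top face $I^{n-1}\times\{1\}$ or through a side $\partial I^{n-1}\times I$, never through the bottom; sending $q$ to this exit point defines a continuous retraction $\rho \colon I^{n}\to J^{n-1}$ (one uses here that $p\notin I^{n}$, so $\rho$ is defined on all of $I^{n}$, and that $J^{n-1}$ is contractible). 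On each open region where the exit face is a fixed $F$ we have $\rho(q)=p+\lambda_{F}(q)(q-p)$ for a single scaling function $\lambda_{F}$ with positive denominator, so $\rho$ is smooth there; a direct check shows that every point of the chamber $J^{n-1}(\epsilon)$ is the first exit of its own ray, so $\rho$ is already the identity on $J^{n-1}(\epsilon)$. The sole failure of smoothness lies along the finitely many \emph{crease walls}, the rays from $p$ to the edges of $J^{n-1}$, where the exit face jumps.

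Next I would smooth $\rho$ across the creases, exploiting two facts: each closed face of $J^{n-1}$ is a convex sub-cube of $J^{n-1}$, and each function $T_{\sigma,\tau}$ of Lemma \ref{lmm:modified smash function} is the identity on $[\tau,1-\tau]$ while snapping any value within $\sigma$ of $0$ or $1$ exactly to $0$ or $1$. Choosing $0<\sigma<\tau<\epsilon$, the idea is to keep $R^{n}_{\epsilon}=\rho$ on a neighborhood of each chamber, where $\rho$ is smooth and the identity, and to replace $\rho$ in a collar about each crease wall by an interpolation taking values in the common face $e$ where the two meeting top-dimensional faces of $J^{n-1}$ intersect. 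Because $e$ is a convex sub-cube contained in $J^{n-1}$, such an interpolation stays inside $J^{n-1}$, and I would organize it by downward induction on $\dim e$: near an edge I flatten the two coordinates cutting out $e$ by applying $T_{\sigma,\tau}$ to the relevant exit parameters of $\rho$, pinning the image onto $e$, and I blend the two one-sided branches by a smooth convex combination inside $e$; near lower strata (corners) the same device forces $R^{n}_{\epsilon}$ to be locally constant at a vertex. Since $T_{\sigma,\tau}$ is the identity on $[\tau,1-\tau]\supseteq[\epsilon,1-\epsilon]$, none of this touches $J^{n-1}(\epsilon)$, so $R^{n}_{\epsilon}$ remains the exact identity on the chamber.

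The main obstacle is precisely the reconciliation of these piecewise definitions into a single globally smooth map in dimensions $n\ge 3$, where the meeting strata $e$ are positive-dimensional and the two branches of $\rho$ do \emph{not} agree off the crease. This forces me to nest the collars by stratum dimension and to verify that the $T_{\sigma,\tau}$-flattened exit parameters land in the correct convex stratum compatibly along every crease at once, all without disturbing the neighborhoods of the chambers. In the base case $n=2$ the meeting strata are single points, so the two branches are both the constant map at a corner and glue trivially; the genuine work is the higher-dimensional bookkeeping. I expect this stratified gluing, rather than the geometric picture of projecting from below and smoothing the creases inside convex faces, to be the delicate part, and the tameness encoded in $T_{\sigma,\tau}$ (Definition \ref{dfn:definition of tame maps}) is exactly the tool that makes it go through.
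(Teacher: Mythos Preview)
The paper does not prove this lemma: it is quoted from \cite[Lemma~3.12]{HS} without argument, so there is no in-paper proof against which to compare your proposal.

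On its own merits, your geometric idea is sound. The radial projection $\rho$ from a point $p$ beneath the omitted bottom face is indeed a well-defined continuous retraction $I^{n}\to J^{n-1}$ (in fact the identity on all of $J^{n-1}$, not merely on the chamber), and its only failures of smoothness lie along the cones over the $(n-2)$-skeleton of $J^{n-1}$, as you say. What you have written, however, is by your own account a \emph{plan} rather than a proof: the smoothing step is described only in outline (``blend the two one-sided branches by a smooth convex combination inside $e$'', ``applying $T_{\sigma,\tau}$ to the relevant exit parameters'') and is never carried out. The difficulty you flag is real---near a codimension-$k$ stratum one must flatten several coordinates simultaneously and verify that the local formulas patch to infinite order across every overlap, with the nested collar widths and the parameters $(\sigma,\tau)$ chosen compatibly across all strata at once. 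None of that verification appears, so the proposal has a genuine gap in execution even though the underlying strategy is reasonable.
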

It is clear that we have the following by the lemmas above.
\begin{prp}[{\cite[Proposition 3.13]{HS}}]\label{extend of J}
Any $\epsilon$-tame map $f \colon J^{n-1} \to X$ can be extended to a $\sigma$-tame map $g \colon I^{n} \to X$ for any 
$\sigma < \epsilon$.
\end{prp}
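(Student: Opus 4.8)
The plan is to produce the extension by composing $f$ with an approximate retraction, and to use tameness to guarantee that the composite still restricts to $f$ on all of $J^{n-1}$, not merely on the chamber. Fix the target parameter $\sigma$ with $0 < \sigma < \epsilon$ and choose an intermediate $\tau$ with $\sigma < \tau < \epsilon$. Let $T^{n}_{\sigma,\tau} \colon \mathbf{R}^{n} \to I^{n}$ be the $\sigma$-tame map associated to Lemma \ref{lmm:modified smash function}, and let $R = R^{n}_{\tau} \colon I^{n} \to J^{n-1}$ be the $\tau$-approximate retraction supplied by Lemma \ref{lmm:approximate retraction exists}, so that $R$ restricts to the identity on $J^{n-1}(\tau)$. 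I would then define
\[
g = f \circ R \colon I^{n} \longrightarrow X,
\]
which is smooth as a composite of smooth maps, and verify that it is the desired $\sigma$-tame extension.

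First I would record the two features of $R$ that the argument needs, both read off from its coordinatewise construction out of the smash functions of Lemma \ref{lmm:modified smash function}: (i) $R$ is $\sigma$-tame, being constant in $t_j$ whenever $\abs{t_j - \alpha} \le \sigma$ for $\alpha \in \{0,1\}$; and (ii) on the subspace $J^{n-1}$ the retraction factor acts as the identity, so that $R|_{J^{n-1}} = T^{n}_{\sigma,\tau}|_{J^{n-1}}$. Granting (i), the $\sigma$-tameness of $g$ is immediate, since $g = f \circ R$ then fails to depend on $t_j$ exactly where $R$ does.

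The extension property is the crux. For $x = (t_1,\dots,t_n) \in J^{n-1}$ I would compare $T^{n}_{\sigma,\tau}(x)$ with $x$ coordinate by coordinate using conditions (1)--(4) of Lemma \ref{lmm:modified smash function}. In a coordinate with $t_j \in [\tau, 1-\tau]$ the smash function is the identity; in a coordinate with $t_j < \tau$ (resp.\ $t_j > 1-\tau$) both $t_j$ and $T_{\sigma,\tau}(t_j)$ lie in $[0,\epsilon]$ (resp.\ $[1-\epsilon,1]$), because $T_{\sigma,\tau}$ is non-decreasing with $T_{\sigma,\tau}(\tau) = \tau$ and $\tau < \epsilon$. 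Hence in every coordinate $x$ and $T^{n}_{\sigma,\tau}(x)$ sit in the $\epsilon$-collar of the same face, so the $\epsilon$-tameness of $f$ (Definition \ref{dfn:definition of tame maps}) forces $f$ to take the same value at the two points. Since $T^{n}_{\sigma,\tau}$ also carries $J^{n-1}$ into $J^{n-1}$, this yields $f \circ T^{n}_{\sigma,\tau}|_{J^{n-1}} = f$, and combining with (ii) gives $g|_{J^{n-1}} = f$.

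The step I expect to be the main obstacle is property (ii). A \emph{generic} $\tau$-approximate retraction is only constrained to be the identity on the chamber $J^{n-1}(\tau)$, and off the chamber it could carry points of $J^{n-1}$ to places where $f$ differs, so a naive composite need not extend $f$ at all. The argument therefore rests on using the specific approximate retraction built in Lemma \ref{lmm:approximate retraction exists}, whose restriction to $J^{n-1}$ is precisely the coordinatewise smash $T^{n}_{\sigma,\tau}$ and thus never leaves the tame collars; pinning this down, together with the routine check that $T^{n}_{\sigma,\tau}$ preserves $J^{n-1}$, is where the real work lies.
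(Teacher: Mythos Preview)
Your overall strategy—compose $f$ with an approximate retraction and then use $\epsilon$-tameness coordinate by coordinate to check that the composite still restricts to $f$ on $J^{n-1}$—is exactly the paper's. The difference is organizational: the paper writes the extension as
\[
g \;=\; f \circ R^{n}_{\epsilon} \circ T^{n}_{\sigma,\epsilon},
\]
with the smash map $T^{n}_{\sigma,\epsilon}$ kept as an explicit outer factor. That factorization gives the $\sigma$-tameness of $g$ for free from the stated $\sigma$-tameness of $T^{n}_{\sigma,\epsilon}$, so nothing about the tameness of $R^{n}_{\epsilon}$ needs to be assumed. Your version $g = f \circ R^{n}_{\tau}$ instead loads both jobs onto the approximate retraction, which is why you are forced to posit (i) and (ii).

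There is a genuine wrinkle in how you have stated (i) and (ii): the map $R^{n}_{\tau}$ supplied by Lemma~\ref{lmm:approximate retraction exists} depends only on the single parameter $\tau$, yet your claimed properties ``$R^{n}_{\tau}$ is $\sigma$-tame'' and ``$R^{n}_{\tau}|_{J^{n-1}} = T^{n}_{\sigma,\tau}|_{J^{n-1}}$'' both involve your independently chosen $\sigma$. As written they cannot literally hold, because the left-hand side does not see $\sigma$ at all. What you are implicitly doing is rebuilding the approximate retraction as (something)$\,\circ\, T^{n}_{\sigma,\tau}$; once that factorization is made explicit you have recovered the paper's formula with $\epsilon$ playing the role of your $\tau$. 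Your coordinatewise verification that $f \circ T^{n}_{\sigma,\tau}|_{J^{n-1}} = f$ via the $\epsilon$-collars is correct and is precisely the substantive step the paper's one-line proof leaves to the reader and to \cite{HS}.
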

\begin{proof}
Let $f \colon J^{n-1} \to X$ be an $\epsilon$-tame map.
Then we have a $\sigma$-tame map $g$ given by the composition $f \circ R^{n}_{\epsilon} \circ T^{n}_{\sigma, \epsilon} \colon I^{n} \to X$.
\end{proof}
Next,
we will intoroduce homotopy groups of diffeological spaces.
Let $f_{0},f_{1} \colon (X,A) \to (Y,B)$ be smooth maps between pairs of diffeological spaces,
then we define $f_{0}$ and $f_{1}$ to be homotopic,
written $f_{0} \simeq f_{1}$,
if there is a homotopy $H$ between $f_{0}$ and $f_{1}$ such that $H(a,t) \in B$ for all $t \in I$.
Such $H$ is called a homotopy of pairs.
A smooth map of pairs $f \colon (X,A) \to (Y,B)$ is called a homotopy equivalence if there exists a smooth map of pairs $g \colon (Y,B) \to (X,A)$ satisfying $g \circ f \simeq 1_{X}$ and $f \circ g \simeq 1_{Y}$.
We denote by
\[
[X,A;Y,B]
\]
the set of homotopy classes of smooth maps between pairs $(X,A)$ and $(Y,B)$.
Similarly,
let us denote by 
\[
[X,A_{1},A_{2}; Y, B_{1}, B_{2}]
\]
the set of homotopy classes of smooth maps between triples $(X,A_{1},A_{2})$ and $(Y,B_{1},B_{2})$.
\begin{dfn}
Given a pointed diffeological space $(X,x_{0})$,
we put 
\[
\pi_{n}(X,x_{0})=[I^{n}, \partial I^{n} ; X,x_{0}],\ n \geq 0.
\]
Similarly,
given a pointed pair of diffeological spaces $(X,A,x_{0})$,
we put 
\[
\pi_{n}(X,A,x_{0})=[I^{n}, \partial I^{n}, J^{n-1} ; X,A,x_{0}], \ n \geq 1.
\]
\end{dfn}
For $n \geq 1$, $\pi_n(X,x_0)$ is isomorphic to $\pi_n(X,x_0,x_0)$,
and $\pi_0(X,x_0)$ is isomorphic to the set of path components
$\pi_0{X}$, regardless of the choice of basepoint $x_0$.  Note,
however, that we consider $\pi_0(X,x_0)$ as a pointed set with
basepoint $[x_0] \in \pi_0{X}$.
We now introduce a group structure on $\pi_n(X,A,x_0)$.
Suppose
$\phi$ and $\psi$ are smooth maps from $(I^n,\bI^n,J^{n-1})$ to
$(X,A,x_0)$.  If $n \geq 2$, or if $n \geq 1$ and $A = x_0$, then
there is a smooth map $\phi * \psi \colon I^n \to X$ which takes
$(t_1,t_2,\dots,t_n) \in I^n$ to
\[
  \begin{cases}
    \phi(T_{\sigma,\tau}(2t_1),t_2,\dots,t_n), & 0 \leq t_1 \leq 1/2
    \\
    \psi(T_{\sigma,\tau}(2t_1-1),t_2,\dots,t_n), & 1/2 \leq t_1 \leq 1.
  \end{cases}
\]
It is clear that $\phi * \psi$ defines a map of triples
$(I^n,\bI^n,J^{n-1}) \to (X,A,x_0)$, and there is a multiplication on
$\pi_n(X,A,x_0)$ given by the formula
\[
  [\phi] \cdot [\psi] = [\phi * \psi] \in \pi_n(X,A,x_0).
\]
Then the homotopy set $\pi_{n}(X,A,x_{0})$ is a group if $n \geq 2$ or if $n \geq 1$ and $A=x_{0}$,
and is an abelian group if $n \geq 3$ or if $n \geq 2$ and $A=x_{0}$.
In general,
the homotopy sets $\pi_{n}(X,x_{0})$ and $\pi_{n}(X,A,x_{0})$ are called homotopy group and relative homotopy group,
respectively.
Moreover,
for every smooth map between pointed pairs $f \colon (X,A,x_{0}) \to (Y,B,y_{0})$,
the induced map $f_{\ast} \colon \pi_{n}(X,A,x_{0}) \to \pi_{n}(Y,B,y_{0})$ is a group homomorphism whenever its source and target are groups.
In particular,
any element of $\pi_{n}(X,A,x_{0})$ is represented by a tame map $(I^{n}, \partial I^{n},J^{n-1}) \to (X,A,x_{0})$ by {\cite[Lemma 3.11]{HS}}.
%
%
%
%
%
%
\section{The final diffeology}
In this section we will introduce the properties of the \emph{final diffeology} (cf.~{\cite[p.3]{DGE}}) 
to make homotopy structures of smooth CW complexes easy to discuss.
In particular, the pushouts are equipped with the final diffeology.
%

Let $Z$ be a set.
Let us denote by $\{(X_{\lambda},f_{\lambda}) \}_{\lambda \in \Lambda}$  the family of pairs of a diffeological space $X_{\lambda}$ 
and a function $f_{\lambda} \colon X_{\lambda} \to Z$.
We say that $\mathcal{F}=\{(X_{\lambda},f_{\lambda}) \}_{\lambda \in \Lambda}$ is a \emph{functionalized cover} of $Z$ if the union of the image of $f_{\lambda}$ covers $Z$,
that is,
$\cup_{\lambda \in \Lambda} f_{\lambda}(X_{\lambda})=Z$ holds.
Then there exists a finest diffeology of $Z$ such that all functions $f_{\lambda}$ are smooth,
and it will be called the \emph{final diffeology} defined by $\mathcal{F}$ and will be denoted by $<\mathcal{F}>$.
Then a parametrization $P \colon U \to Z$ is a plot of $Z$ if and only if it is lifts locally,
at each $r \in U$,
along a function $f_{\lambda} \colon X_{\lambda} \to Z$ (cf.~{\cite[1.68]{Zem}}).
Conversly,
let $Z$ be a diffeological space and $D$ be its diffeology.
We say that $D$ is \emph{generated} by $\mathcal{F}$,
if $D=< \mathcal{F} >$ holds.
In particular,
the final diffeology is a generalization of the weak diffeology (cf.~{\cite[p.17]{HS}}).
For example,
let $\mathcal{I}=\{(X_{\lambda}, i_{\lambda})\}_{\lambda \in \Lambda}$ be a functionalized cover of 
a diffeological space $X$ consisting of the inclusions $i_{\lambda}$ from subspace $X_{\lambda}$ to $X$.
Then the final diffeology defined by $\mathcal{I}$ is the weak diffeology.
We shall present the fundamental properties of the final diffeologies.
\begin{prp}
Let $Z$ be a diffeological space equipped with the final diffeology defined by 
its functionalized cover $\{(X_{\lambda}, f_{\lambda})\}_{\lambda \in \Lambda}$,
and let $Z^{\prime}$ be a diffeological space.
Then a map $g \colon Z \to Z^{\prime}$ is a smooth if and only if the composition 
$g \circ f_{\lambda} \colon X_{\lambda} \to Z^{\prime}$ is smooth for each $\lambda \in \Lambda$.
\end{prp}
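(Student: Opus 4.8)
The plan is to verify the universal mapping property of the final diffeology directly from the local lifting characterization of its plots recalled just above the statement. The forward implication is immediate, so the substance lies in the converse, where the essential tool is the locality axiom for plots of a diffeological space.

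First I would dispose of the ``only if'' direction. Since $Z$ carries the final diffeology $\langle \mathcal{F} \rangle$, which by definition is the finest diffeology rendering every $f_{\lambda} \colon X_{\lambda} \to Z$ smooth, each $f_{\lambda}$ is itself smooth. Hence if $g \colon Z \to Z'$ is smooth, then $g \circ f_{\lambda}$ is a composite of smooth maps and is therefore smooth for every $\lambda \in \Lambda$.

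For the ``if'' direction, I would test the smoothness of $g$ against an arbitrary plot $P \colon U \to Z$, aiming to show that $g \circ P$ is a plot of $Z'$. Fix $r \in U$. By the local lifting characterization of plots of $Z$, there is an open neighborhood $V$ of $r$ in $U$, an index $\lambda$, and a plot $Q \colon V \to X_{\lambda}$ with $P|_{V} = f_{\lambda} \circ Q$. On $V$ we then have
\[
(g \circ P)|_{V} = (g \circ f_{\lambda}) \circ Q,
\]
and since $g \circ f_{\lambda}$ is smooth by hypothesis while $Q$ is a plot of $X_{\lambda}$, the right-hand side is a plot of $Z'$. Thus every point of $U$ has a neighborhood on which $g \circ P$ restricts to a plot of $Z'$.

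Finally I would invoke the locality axiom of diffeology: a parametrization that is locally a plot is itself a plot. This upgrades the local conclusion to the assertion that $g \circ P$ is a plot of $Z'$ on all of $U$, and since $P$ was arbitrary, $g$ is smooth. The only point requiring care is that the index $\lambda$ furnishing the local lift may vary with $r$, so one genuinely obtains an open cover of $U$ rather than a single lift; but because smoothness is a local condition this causes no difficulty, and the locality axiom closes the argument. I do not anticipate a serious obstacle here, as this is the expected characterization of maps out of a final (colimit-type) diffeology.
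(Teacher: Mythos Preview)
Your proof is correct and follows essentially the same approach as the paper: both take an arbitrary plot $P\colon U\to Z$, use the local lifting characterization of the final diffeology to factor $P|_V$ through some $f_\lambda$, and conclude that $g\circ P$ is locally (hence globally) a plot of $Z'$. Your write-up is in fact more complete than the paper's, which omits the ``only if'' direction and does not explicitly name the locality axiom.
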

\begin{proof}
Let $P \colon U \to Z$ be a plot of $Z$.
For any $r \in U$,
there exist an open neighborhood $V$ of $r$ and a plot $Q_{\lambda} \colon V \to X_{\lambda}$ of $X_{\lambda}$ satisfying $P|V=f_{\lambda} \circ Q_{\lambda}$.
Then
$
g \circ P|V= g \circ (f_{\lambda} \circ Q_{\lambda})
$
is a plot of $Z^{\prime}$.
Thus $g$ is smooth.
\end{proof}
\begin{prp}\label{product of final diffeology}
Let $Z$ and $Z^{\prime}$ be a diffeological spaces equipped with the final diffeologies defined by 
their functionalized covers $\{(X_{\lambda}, f_{\lambda})\}_{\lambda \in \Lambda}$ and 
$\{(Y_{\lambda^{\prime}}, g_{\lambda^{\prime}}) \}_{\lambda^{\prime} \in \Lambda^{\prime}}$,
respectively.
Then the product diffeology of $Z \times Z^{\prime}$ is generated by
$\{(X_{\lambda} \times Y_{\lambda^{\prime}},f_{\lambda}\times g_{\lambda^{\prime}}) \}$.
\end{prp}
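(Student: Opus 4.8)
The plan is to show that the two diffeologies on $Z \times Z^{\prime}$ coincide by comparing their sets of plots, since a diffeology is determined by its plots. Write $D$ for the product diffeology and $\blck{\mathcal{G}}$ for the final diffeology generated by $\mathcal{G} = \{(X_{\lambda} \times Y_{\lambda^{\prime}}, f_{\lambda}\times g_{\lambda^{\prime}})\}$. The essential tool is the local-lifting characterization recalled just above the previous proposition: a parametrization belongs to $\blck{\mathcal{G}}$ precisely when it lifts locally, near each point of its domain, along some $f_{\lambda}\times g_{\lambda^{\prime}}$. I would establish the two inclusions separately, using throughout the defining property of the product diffeology, namely that a parametrization into a product is a plot if and only if its two component parametrizations are plots.

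For $\blck{\mathcal{G}} \subseteq D$, I would take a plot $P \colon U \to Z \times Z^{\prime}$ of the final diffeology. Near each $r \in U$ it factors as $P|V = (f_{\lambda}\times g_{\lambda^{\prime}}) \circ Q$ for some plot $Q \colon V \to X_{\lambda} \times Y_{\lambda^{\prime}}$. Writing $Q = (Q_{1},Q_{2})$, the components $Q_{1}$ and $Q_{2}$ are plots of $X_{\lambda}$ and $Y_{\lambda^{\prime}}$, so $f_{\lambda}\circ Q_{1}$ and $g_{\lambda^{\prime}}\circ Q_{2}$ are plots of $Z$ and $Z^{\prime}$ by smoothness of $f_{\lambda}$ and $g_{\lambda^{\prime}}$. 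Hence $P|V$ is a plot of the product diffeology, and since being a plot is a local condition, $P \in D$.

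For the reverse inclusion $D \subseteq \blck{\mathcal{G}}$, I would take a product plot $P = (P_{1},P_{2}) \colon U \to Z \times Z^{\prime}$, so that $P_{1}$ and $P_{2}$ are plots of $Z$ and $Z^{\prime}$. Fix $r \in U$. Applying the local-lifting characterization to each factor yields an open $V_{1} \ni r$ with $P_{1}|V_{1} = f_{\lambda}\circ Q_{1}$ and an open $V_{2} \ni r$ with $P_{2}|V_{2} = g_{\lambda^{\prime}}\circ Q_{2}$, for suitable indices and plots. Restricting to $V = V_{1} \cap V_{2}$, the pair $(Q_{1}|V, Q_{2}|V)$ is a plot of $X_{\lambda} \times Y_{\lambda^{\prime}}$ and $P|V = (f_{\lambda}\times g_{\lambda^{\prime}}) \circ (Q_{1}|V, Q_{2}|V)$, so $P$ lifts locally along a member of $\mathcal{G}$ at $r$. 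As $r$ was arbitrary, $P \in \blck{\mathcal{G}}$.

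The only genuine subtlety — and the step I would treat most carefully — is this last coordination: the two factors lift along a priori different indices $\lambda, \lambda^{\prime}$ over a priori different neighborhoods $V_{1}, V_{2}$. This causes no real trouble because the generating family $\mathcal{G}$ ranges over all pairs $(\lambda,\lambda^{\prime})$, so the combined map $f_{\lambda}\times g_{\lambda^{\prime}}$ is itself a member of $\mathcal{G}$, and passing to the common refinement $V = V_{1}\cap V_{2}$ produces a single lift there. Everything else reduces to the definition of the product diffeology together with the smoothness of the $f_{\lambda}$ and $g_{\lambda^{\prime}}$, so I expect the argument to be short once the two inclusions are laid out in this order.
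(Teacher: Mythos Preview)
Your proposal is correct and the substantive direction $D \subseteq \blck{\mathcal{G}}$ is argued exactly as in the paper: split a product plot into components, lift each along some $f_{\lambda}$ and $g_{\lambda^{\prime}}$ on neighborhoods $V_{1},V_{2}$, and pair the lifts over $V_{1}\cap V_{2}$. The paper omits the easy inclusion $\blck{\mathcal{G}} \subseteq D$ (which follows at once from the universal property of the final diffeology, since each $f_{\lambda}\times g_{\lambda^{\prime}}$ is smooth for the product diffeology), whereas you spell it out; otherwise the two arguments coincide.
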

\begin{proof}
Let $P \colon U \to Z \times Z^{\prime}$ be a plot of $Z \times Z^{\prime}$,
where $P(r)=(P_{Z}(r), P_{Z^{\prime}}(r))$.
Then for any $r \in U$,
there are a plot $Q_{\lambda} \colon V \to X_{\lambda}$ of $X_{\lambda}$ and a plot 
$Q^{\prime}_{\lambda^{\prime}} \colon V^{\prime} \to Y_{\lambda^{\prime}}$ of $Y_{\lambda^{\prime}}$ such that
$P_{Z}|V=f_{\lambda} \circ Q_{\lambda}$ and $P_{Z^{\prime}}|V^{\prime}=g_{\lambda^{\prime}} \circ Q^{\prime}_{\lambda^{\prime}}$,
respectively.
Let us define the plot 
$Q_{(\lambda, \lambda^{\prime})} \colon V \cap V^{\prime} \to X_{\lambda} \times Y_{\lambda^{\prime}}$ by 
$Q_{(\lambda,\lambda^{\prime})}(s)=(Q_{\lambda}(s),Q^{\prime}_{\lambda^{\prime}}(s))$.
Then we have $P|V \cap V^{\prime}=(f_{\lambda} \times g_{\lambda^{\prime}}) \circ Q_{(\lambda,\lambda^{\prime})}$.
\end{proof}
Clearly,
we have the following by Proposition \ref{product of final diffeology}.
\begin{crl}\label{final diffeology and identity}
Let $Z$ be a diffeological space equipped with the final diffeology defined by its functionalized cover 
$\{(X_{\lambda},f_{\lambda}) \}_{\lambda \in \Lambda}$.
Then for any diffeological space $Z^{\prime}$,
the product diffeology of $Z \times Z^{\prime}$ is generated by $\{(X_{\lambda} \times Z^{\prime},f_{\lambda} \times 1_{Z^{\prime}})\}$,
where $1_{Z^{\prime}}$ is the identity $Z^{\prime} \to Z^{\prime}$.
\end{crl}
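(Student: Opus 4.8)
The plan is to reduce the statement to Proposition \ref{product of final diffeology} by exhibiting the arbitrary diffeological space $Z^{\prime}$ as itself carrying a final diffeology. The natural choice is the one-element functionalized cover $\{(Z^{\prime}, 1_{Z^{\prime}})\}$, where $1_{Z^{\prime}} \colon Z^{\prime} \to Z^{\prime}$ is the identity. First I would check that this is indeed a functionalized cover of $Z^{\prime}$, since the image of its single function is all of $Z^{\prime}$, and that the final diffeology it defines coincides with the originally given diffeology of $Z^{\prime}$.

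For this last point, recall that a parametrization $P \colon U \to Z^{\prime}$ is a plot of the final diffeology defined by $\{(Z^{\prime}, 1_{Z^{\prime}})\}$ precisely when it lifts locally, at each $r \in U$, along $1_{Z^{\prime}}$. Since $1_{Z^{\prime}}$ is the identity, such a local lift is just a plot $Q$ of $Z^{\prime}$ with $P|V = Q$ on some open neighborhood $V$ of $r$. Thus $P$ is a plot of the final diffeology if and only if it is locally a plot of $Z^{\prime}$, which by the locality axiom for diffeologies is equivalent to $P$ being a plot of $Z^{\prime}$ itself. Hence the final diffeology defined by $\{(Z^{\prime}, 1_{Z^{\prime}})\}$ is exactly the original diffeology of $Z^{\prime}$.

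With this identification in hand, I would apply Proposition \ref{product of final diffeology} to the covers $\{(X_{\lambda}, f_{\lambda})\}_{\lambda \in \Lambda}$ of $Z$ and $\{(Z^{\prime}, 1_{Z^{\prime}})\}$ of $Z^{\prime}$. The proposition then asserts that the product diffeology of $Z \times Z^{\prime}$ is generated by $\{(X_{\lambda} \times Z^{\prime},\ f_{\lambda} \times 1_{Z^{\prime}})\}$, which is exactly the claim. There is essentially no obstacle here: the only substantive observation is that every diffeological space is the final diffeology of its trivial singleton cover, and this is immediate from the locality axiom, so the corollary follows directly.
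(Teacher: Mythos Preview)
Your proposal is correct and matches the paper's approach: the paper simply states that the corollary follows from Proposition~\ref{product of final diffeology}, and you have supplied the implicit detail that $Z'$ carries the final diffeology of its trivial singleton cover $\{(Z',1_{Z'})\}$. Your verification of that fact via the locality axiom is a clean way to make the otherwise unstated step explicit.
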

Let $\{f_{\lambda} \}_{\lambda \in \Lambda}$ be the family of smooth maps $f_{\lambda} \colon X_{\lambda} \to Z$.
We denote by $\bigcup_{\lambda \in \Lambda}f_{\lambda}$ the composition
\[
\bigtriangledown \circ ( \coprod_{\lambda \in \Lambda} f_{\lambda} ) \colon 
\coprod_{\lambda \in \Lambda} X_{\lambda} \to \coprod_{\lambda \in \Lambda} Z \to Z,
\]
where $\bigtriangledown$ is the foloding map of $\coprod_{\lambda \in \Lambda} Z$ onto $Z$.
Then we have the following.
\begin{lmm}\label{subduction of final diffeology}
Let $Z$ be a diffeological space and 
$\{(X_{\lambda},f_{\lambda}) \}_{\lambda \in \Lambda}$
be its functionalized cover.
Then the following conditions are equivalent.
\begin{enumerate}
\item
The diffeology of $Z$ is generated by $\{(X_{\lambda},f_{\lambda}) \}_{\lambda \in \Lambda}$.
\item
$
\bigcup_{\lambda \in \Lambda} f_{\lambda} \colon \coprod_{\lambda \in \Lambda} X_{\lambda} \to Z
$
is a subduction.
\end{enumerate}
Moreover, 
let $Z^{\prime}$ be a diffeological space.
Then the following conditions are equivalent.
\begin{enumerate}
\item
The product diffeology of $Z \times Z^{\prime}$ is generated by $\{(X_{\lambda} \times Z^{\prime}, f_{\lambda} \times 1_{Z^{\prime}})\}$.
\item
$
\bigcup_{\lambda \in \Lambda}(f_{\lambda} \times 1_{Z^{\prime}}) \colon \coprod_{\lambda \in \Lambda}(X_{\lambda} \times Z^{\prime})
\to Z \times Z^{\prime}
$
is a subduction.
\end{enumerate}
\end{lmm}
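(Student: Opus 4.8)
The plan is to reduce both equivalences to an unwinding of three definitions---the coproduct diffeology, subduction, and the generated (final) diffeology---and then to obtain the second equivalence as a direct instance of the first, applied to a suitable functionalized cover of $Z \times Z^{\prime}$.

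For the first equivalence, I would write $g = \bigcup_{\lambda \in \Lambda} f_{\lambda} \colon \coprod_{\lambda \in \Lambda} X_{\lambda} \to Z$ and first record that $g$ is surjective precisely because $\{(X_{\lambda},f_{\lambda})\}$ is a functionalized cover, i.e.\ $\cup_{\lambda} f_{\lambda}(X_{\lambda}) = Z$. The content of the lemma then lies in comparing two local-lifting conditions for a parametrization $P \colon U \to Z$. That $g$ is a subduction means exactly that $Z$ carries the quotient diffeology of $g$: $P$ is a plot iff around each $r \in U$ there is an open $V$ and a plot $Q \colon V \to \coprod_{\lambda} X_{\lambda}$ with $P|V = g \circ Q$. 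On the other hand, $P$ being a plot of the generated diffeology $\langle \mathcal{F} \rangle$ means that around each $r$ there is some $\lambda$ and a plot $Q_{\lambda} \colon V \to X_{\lambda}$ with $P|V = f_{\lambda} \circ Q_{\lambda}$.

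The one genuine step is to identify these two conditions, and for this I would invoke the defining property of the coproduct diffeology: any plot $Q$ of $\coprod_{\lambda} X_{\lambda}$ factors locally through a single inclusion $X_{\lambda} \hookrightarrow \coprod_{\lambda} X_{\lambda}$. Shrinking $V$, we may write $Q = \iota_{\lambda} \circ Q_{\lambda}$, so that $g \circ Q = f_{\lambda} \circ Q_{\lambda}$; conversely each $f_{\lambda} \circ Q_{\lambda}$ arises as $g \circ (\iota_{\lambda} \circ Q_{\lambda})$. Hence a local lift of $P$ along $g$ is the same datum as a local lift of $P$ along some $f_{\lambda}$, so the quotient diffeology of $g$ coincides, as a set of plots, with $\langle \mathcal{F} \rangle$. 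Together with surjectivity this gives (1) $\Leftrightarrow$ (2). I expect this passage between lifting along the folding map and lifting along a single $f_{\lambda}$ to be the only substantive point; everything else is bookkeeping.

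For the second equivalence, I would observe that $\{(X_{\lambda} \times Z^{\prime},\, f_{\lambda} \times 1_{Z^{\prime}})\}_{\lambda \in \Lambda}$ is again a functionalized cover, since $\cup_{\lambda} (f_{\lambda} \times 1_{Z^{\prime}})(X_{\lambda} \times Z^{\prime}) = \left(\cup_{\lambda} f_{\lambda}(X_{\lambda})\right) \times Z^{\prime} = Z \times Z^{\prime}$. Applying the first equivalence to the space $Z \times Z^{\prime}$ equipped with this cover---so that ``generated by'' refers precisely to the product diffeology---immediately yields that the product diffeology of $Z \times Z^{\prime}$ equals $\langle \{(X_{\lambda} \times Z^{\prime},\, f_{\lambda} \times 1_{Z^{\prime}})\} \rangle$ if and only if $\bigcup_{\lambda}(f_{\lambda} \times 1_{Z^{\prime}})$ is a subduction. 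No separate argument is required.
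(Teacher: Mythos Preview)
Your proposal is correct and matches the paper's approach: for the first equivalence both you and the paper unwind the definitions of subduction, the coproduct diffeology, and the final (generated) diffeology to identify the two local-lifting conditions on a plot of $Z$. For the second equivalence the paper invokes the preceding corollary on products with a fixed factor rather than, as you do, recognizing it as a direct instance of the first equivalence applied to the cover $\{(X_{\lambda}\times Z',\,f_{\lambda}\times 1_{Z'})\}$; the content is the same.
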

\begin{proof}
It is clear that $\bigcup_{\lambda \in \Lambda}f_{\lambda}$ is a subduction by the definition of the final diffeology.
Conversely,
any $P \colon U \to Z$ be a plot of $Z$ is locally lifts,
at each $r \in U$,
along a smooth map $f_{\lambda}$ by the definitions of the subduction and the coproduct diffeology.
That the next two conditions are equivalent is immediate from Corollary \ref{final diffeology and identity}.
\end{proof}
Given the following pushout square
\[
\xymatrix{
A
\ar[r]^{i_{1}}
\ar[d]_{i_{2}}
&
X_{1}
\ar[d]^{f_{1}}
\\
X_{2}
\ar[r]_(0.3){f_{2}}
&
X_{1} \cup_{(i_{1}, i_{2})} X_{2}.
}
\]
Then the pushout $X_{1} \cup_{(i_{1},i_{2})} X_{2}$ is the quotient space $X_{1} \coprod X_{2} /\sim$,
with the quotient map $\pi \colon X_{1} \coprod X_{2} \to X_{1} \cup_{(i_{1},i_{2})} X_{2}$ satisfying 
$\pi|X_{1}=f_{1}$ and $\pi|X_{2}=f_{2}$,
where $\sim$ is the smallest equivalence relation on the coproduct space $X_{1} \coprod X_{2}$ that $i_{1}(a) \sim i_{2}(a)$ for all $a \in A$.
Since $\pi=f_{1} \bigcup f_{2}$ is the subduction by Lemma \ref{subduction of final diffeology},
its diffeology is generated by $\{(X_{j},f_{j})\}_{j=1,2}$.
When $A$ is a subspace of $X_{2}$,
the pushout $X_{1} \cup_{i_{1}} X_{2}$ is called the adjunction space of $X_{2}$ to $X_{1}$ along $i_{1}$ and its diffeology is called the 
\emph{gluing diffeology} (cf.~\cite{Pervova}).
Now,
we have the following.
\begin{prp}
Let $Z^{\prime}$ be a diffeological space.
Then the following commutative diagram
\[
\xymatrix{
A \times Z^{\prime}
\ar[r]^{i_{1} \times 1_{Z^{\prime}} }
\ar[d]_{i_{2} \times 1_{Z^{\prime}} }
&
X_{1} \times Z^{\prime}
\ar[d]^{f_{1} \times 1_{Z^{\prime}}}
\\
X_{2} \times Z^{\prime}
\ar[r]_(0.35){f_{2} \times 1_{Z^{\prime}} }
&
(X_{1} \cup_{(i_{1}, i_{2})} X_{2}) \times Z^{\prime}
}
\]
is a pushout.
\end{prp}
\begin{proof}
By Corollary \ref{final diffeology and identity} and Lemma \ref{subduction of final diffeology},
we have a subduction
\[
(f_{1} \times 1_{Z^{\prime}}) \bigcup (f_{2} \times 1_{Z^{\prime}}) \colon (X_{1} \times Z^{\prime}) \coprod (X_{2} \times Z^{\prime})
\to (X_{1} \cup_{(i_{1}, i_{2})} X_{2}) \times Z^{\prime}.
\]
Therefore the commutative diagram above is a pushout square.
\end{proof}
%
%
%
%
%
%
\section{Smooth CW complexes}
In this section we introduce the notion of \emph{smooth CW complexes} and discuss their fundamental properties.
\begin{dfn}\label{dfn:definition of smooth CW complexes}
(i)
We say that a pair of diffeological spaces $(X,A)$ is a \emph{smooth relative CW complex} if there exists a sequence of inclusions 
called the skeleta
\[
A=X^{-1} \xrightarrow{i_{0}} X^{0} \xrightarrow{i_{1}} \cdots X^{n-1} \xrightarrow{i_{n}}  X^{n} \to \cdots
\]
such that the natural map $X^{-1} \to \mbox{colim}X^{n}$ coincides with the inclusion $A \to X$,
and for each $n \geq 0$ there exists a pushout square
\[
\xymatrix{
\coprod_{\lambda \in \Lambda_{n}}
\partial I^{n}
\ar[r]^(0.58){\bigcup \phi_{\lambda}}
\ar[d]_{\cap}
&
X^{n-1}
\ar[d]^{i_{n}}
\\
\coprod_{\lambda \in \Lambda_{n}} I^{n}
\ar[r]_(0.6){\bigcup \Phi_{\lambda}}
&
X^{n},
}
\]
that is,
the $n$-skelton $X^{n}$ is a pushouts $X^{n-1} \cup_{\bigcup \phi_{\lambda}} (\coprod_{\lambda \in \Lambda} I^{n})$.
Then,
\[
i_{n} \bigcup_{\lambda \in \Lambda_{n}} \Phi_{\lambda} \colon X^{n-1} \coprod_{\lambda \in \Lambda_{n}} I^{n} \to X^{n}
\]
is a subduction by Lemma \ref{subduction of final diffeology}.
We call 
$\phi_{\lambda}$ and $\Phi_{\lambda}$ \emph{attaching} and \emph{characteristic} maps,
respectively.
In particular,
if $A = \emptyset$ then $X$ is called a \emph{smooth CW complex}.

(ii)
We say that a smooth relative CW complex $(X,A)$ has dimension $n$,
written $\mbox{dim}(X,A)=n$,
if $X=X^{n}$ holds.

(iii)
A smooth relative CW complex is called \emph{gathered} if all attaching maps are tame.
%
\end{dfn}
We will introduce the examples of smooth CW complexes.
\begin{prp}\label{prp:example of CW complex}
The cube $I^{n}$ is a smooth CW complex and the unit interval $I$ is a gathered CW complex.
\end{prp}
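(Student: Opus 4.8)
The plan is to put on $I^{n}$ its standard cubical face decomposition and read off the skeleta, cells, and attaching and characteristic maps directly, handling the unit interval $I=I^{1}$ as the base case. First I would set, for $0\le k\le n$, the $k$-skeleton $X^{k}$ to be the union of the $k$-dimensional faces of $I^{n}$, where a face $F$ is specified by letting $k$ of the coordinates range over $I$ and fixing the other $n-k$ at $0$ or $1$. Each such $F$ is a $k$-cell with characteristic map $\Phi_{F}\colon I^{k}\to I^{n}$ the affine embedding onto $F$ and attaching map $\phi_{F}=\Phi_{F}|_{\partial I^{k}}$ carrying $\partial I^{k}$ onto $\partial F$, which is a union of $(k-1)$-faces and so lies in $X^{k-1}$. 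I would then check the combinatorial claim that $X^{k}$ is the pushout $X^{k-1}\cup_{\bigcup\phi_{F}}\bigl(\coprod_{F}I^{k}\bigr)$; this is routine, the only geometric input being that the boundary of a $k$-face is exactly the union of the $(k-1)$-faces it contains. The top skeleton is $X^{n}=I^{n}$ with a single $n$-cell whose characteristic map is $\id\colon I^{n}\to I^{n}$.

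The decisive point is the diffeology. By Lemma \ref{subduction of final diffeology} each $X^{k}$, being a pushout, carries the final diffeology generated by its characteristic maps together with the inclusion of $X^{k-1}$. For the top skeleton this at once yields the standard diffeology: the unique $n$-cell contributes $\id\colon I^{n}\to I^{n}$ to the generating family, so the final diffeology on $X^{n}$ is precisely the subspace diffeology of $I^{n}$, independently of whatever diffeology $X^{n-1}$ carries, and $\colim X^{k}=X^{n}=I^{n}$ realizes the inclusion $\emptyset\to I^{n}$. I would isolate this as a small remark, immediate from Lemma \ref{subduction of final diffeology}: a pushout one of whose characteristic maps is a subduction onto the standard cube recovers the standard diffeology.

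The step I expect to be the main obstacle is the smoothness of the attaching maps $\phi_{F}\colon \partial I^{k}\to X^{k-1}$ for $k\ge 2$, and with it the correct reading of the diffeology on the boundaries. A plot of $I^{n}$ that passes flatly through a corner of $\partial I^{k}$ need not lift locally to a single face, so the final (gluing) diffeology on a skeleton does not in general agree with the subspace diffeology it would inherit from $I^{n}$. The hard part will therefore be to keep track that every $\partial I^{k}$ occurring as a domain is taken as the cubical subcomplex it is, carrying the final diffeology generated by its own faces; granting this, $\phi_{F}$ is smooth because it is cellular and affine, sending each face of $\partial I^{k}$ onto a face of $X^{k-1}$, so that any local lift to a face of $\partial I^{k}$ is pushed forward to a local lift to a face of $X^{k-1}$. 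The product bookkeeping behind the faces of $\partial I^{k}$ is governed by Proposition \ref{product of final diffeology} and Corollary \ref{final diffeology and identity}, and reconciling these two diffeologies on the boundaries is the real content of the argument.

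Finally I would specialize to $n=1$. Here $X^{0}=\partial I=\{0,1\}$ is discrete, there is a single $1$-cell with characteristic map $\id\colon I\to I$, and the attaching map $\phi\colon \partial I^{1}=\{0,1\}\to X^{0}$ is the identity of a zero-dimensional subcomplex, hence vacuously tame in the sense of Definition \ref{dfn:definition of tame maps}. Thus every attaching map of $I$ is tame and $I$ is a gathered CW complex. By contrast, for $n\ge 2$ the attaching map of the top cell is the identity inclusion $\partial I^{n}\to X^{n-1}$, which is not locally constant at the vertices and so is not tame; this is exactly why the face structure exhibits $I^{n}$ as a smooth CW complex but not, a priori, as a gathered one, in agreement with the statement.
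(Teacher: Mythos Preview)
Your plan is far more elaborate than the paper's actual argument, which is essentially a single line: for each $m$, the adjunction space $\partial I^{m}\cup_{1_{\partial I^{m}}}I^{m}$ (with $\partial I^{m}$ taken in its subspace diffeology, as fixed in Section~\ref{the basic notion of diffeology}) maps diffeomorphically onto $I^{m}$, and since $\partial I\to\partial I$ is tame this makes $I$ gathered. That adjunction diffeomorphism is exactly your ``top cell'' remark---the identity $I^{m}\to I^{m}$ among the generators forces the standard diffeology on the pushout regardless of what sits below---so at the level of the key idea you and the paper agree completely.

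Where you diverge is in worrying about the intermediate skeleta, and here you have put your finger on something the paper simply does not discuss. You are right that for $k\ge 2$ the subspace diffeology on $\partial I^{k}\subset\mathbf{R}^{k}$ is strictly coarser than the final diffeology generated by its faces, because a smooth parametrization can bend around a corner without locally factoring through a single face. Your proposed cure---to read every $\partial I^{k}$ appearing as a domain in the pushout square of Definition~\ref{dfn:definition of smooth CW complexes} with the \emph{final} diffeology from its faces---does make the attaching maps cellular and hence smooth, but it is not the paper's convention: Section~\ref{the basic notion of diffeology} explicitly equips $\partial I^{n}$ with the subspace diffeology, and Definition~\ref{dfn:definition of smooth CW complexes} uses that $\partial I^{n}$ in the upper-left corner of the pushout. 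Under the paper's convention the identity map $\partial I^{k}_{\mathrm{sub}}\to X^{k-1}_{\mathrm{glue}}$ is \emph{not} smooth for $k\ge 2$, so the na\"{\i}ve inductive verification of the lower skeleta does not go through as you sketch it. The paper's proof sidesteps this entirely by asserting only the one adjunction diffeomorphism and leaving the skeletal bookkeeping implicit; your more careful reading exposes a genuine tension in the conventions that the paper does not resolve. For the purposes of matching the paper, you should trim your argument down to the adjunction observation and the tameness of $\partial I\to\partial I$, and flag the lower-skeleton issue as a remark rather than as part of the proof.
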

\begin{proof}
For all $m$,
let $\partial I^{m} \cup_{1_{\partial I^{m}}} I^{m}$ be the adjunction space of $I^{m}$ to $\partial I^{m}$ along the identity $1_{\partial I^{m}}$.
Then natural map $\partial I^{m} \cup_{1_{\partial I^{m}}} I^{m} \to I^{m}$ is a diffeomorphism.
Thus $I^{n}$ is a smooth CW complex.
Since the identity $\partial I \to \partial I$ is tame,
$I$ is a gathered CW complex.
\end{proof}
Let $(X,A)$ be a smooth relative CW complex.
Let us denote by
\[
\{ (I^{n}, \Phi_{\lambda}) \}_{\lambda \in \Lambda_{n},n\geq -1}
\]
the functionalized cover consisting of 
all characteristic maps $\Phi_{\lambda} \colon I^{n} \to X^{n}$,
where $\Phi_{\lambda}$ is the identity $A \to A$ if $n=-1$.
We call it the characterized cover of $(X,A)$.
Then we have the following.
\begin{prp}\label{plots of smooth CW complexes}
Let $(X,A)$ be a smooth relative CW complex and let $\{ (I^{n}, \Phi_{\lambda}) \}_{\lambda \in \Lambda_{n},n\geq -1}$
be its characterized cover.
Then  the diffeology of $(X,A)$ is generated by this cover.
\end{prp}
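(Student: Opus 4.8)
The plan is to reduce everything to statements about subductions via Lemma~\ref{subduction of final diffeology} and then argue by induction on the skeleta, using the two standard facts that a composition of subductions is a subduction and that a coproduct of subductions is a subduction (cf.~\cite{Zem}). By Lemma~\ref{subduction of final diffeology} applied to the characterized cover, the assertion is equivalent to showing that the union map
\[
\bigcup_{m \geq -1}\ \bigcup_{\lambda \in \Lambda_{m}} \Phi_{\lambda} \colon \coprod_{m \geq -1}\ \coprod_{\lambda \in \Lambda_{m}} I^{m} \longrightarrow X
\]
of all characteristic maps is a subduction, which is what I would establish.

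First I would prove, by induction on $n \geq -1$, that the map
\[
\Psi_{n} \colon \coprod_{-1 \leq m \leq n}\ \coprod_{\lambda \in \Lambda_{m}} I^{m} \longrightarrow X^{n}
\]
assembled from all characteristic maps of dimension at most $n$ (with the $m=-1$ term the identity of $A$) is a subduction. The case $n=-1$ is trivial, since $\Psi_{-1} = \id_{A}$. For the inductive step I would factor $\Psi_{n}$ as the composite
\[
\Bigl(\coprod_{m \leq n-1}\coprod_{\lambda} I^{m}\Bigr) \coprod \Bigl(\coprod_{\lambda \in \Lambda_{n}} I^{n}\Bigr)
\xrightarrow{\ \Psi_{n-1} \coprod \id\ }
X^{n-1} \coprod_{\lambda \in \Lambda_{n}} I^{n}
\xrightarrow{\ i_{n}\bigcup_{\lambda}\Phi_{\lambda}\ }
X^{n}.
\]
The left-hand map is the coproduct of the subduction $\Psi_{n-1}$ (inductive hypothesis) with an identity, hence a subduction; the right-hand map is the subduction recorded in Definition~\ref{dfn:definition of smooth CW complexes} coming from the defining pushout of $X^{n}$. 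A composition of subductions being a subduction, $\Psi_{n}$ is a subduction, closing the induction.

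Finally I would pass to the colimit $X = \colim_{n} X^{n}$. Since a colimit in $\Diff$ carries the final diffeology defined by the natural maps $j_{n} \colon X^{n} \to X$, the map $\bigcup_{n} j_{n} \colon \coprod_{n} X^{n} \to X$ is a subduction by Lemma~\ref{subduction of final diffeology}. Composing it with the coproduct $\coprod_{n} \Psi_{n}$, itself a subduction, produces a subduction from $\coprod_{n}\coprod_{m \leq n}\coprod_{\lambda} I^{m}$ onto $X$ whose component maps are exactly the characteristic maps $\Phi_{\lambda} \colon I^{m} \to X^{m} \hookrightarrow X$, each repeated once for every $n \geq m$. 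Deleting these harmless repetitions leaves precisely the union map of all characteristic maps displayed above, which is therefore a subduction; by Lemma~\ref{subduction of final diffeology} the diffeology of $X$ is generated by the characterized cover. The step I expect to require the most care is this colimit step: one must use that the sequential colimit genuinely carries the final diffeology of the natural maps --- equivalently, that every plot of $X$ lifts locally through some finite skeleton --- after which the inductive subduction argument closes the proof, with only the bookkeeping of coproducts and multiplicities remaining.
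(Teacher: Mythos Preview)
Your proof is correct and follows essentially the same approach as the paper: both arguments reduce the claim to Lemma~\ref{subduction of final diffeology}, use that the natural map $\bigcup_{n} j_{n}\colon \coprod_{n} X^{n}\to X$ is a subduction to pass from $X$ to a skeleton, and then use the pushout subduction $i_{n}\bigcup_{\lambda}\Phi_{\lambda}$ inductively to reach a characteristic map. The paper compresses the skeleton-by-skeleton descent into a single commutative diagram, whereas you spell out the induction and the bookkeeping with coproducts and repeated summands explicitly; your ``deleting repetitions'' step is justified since the folding map $\coprod_{n}\coprod_{m\leq n}\coprod_{\lambda}I^{m}\to\coprod_{m}\coprod_{\lambda}I^{m}$ is smooth and a subduction factoring through a smooth map yields a subduction on the target.
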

\begin{proof}
By the properties of colimits and Lemma \ref{subduction of final diffeology},
any plot $P \colon U \to X$ of $(X,A)$ is locally,
at each $r \in U$,
along a characteristic map $\Phi_{\lambda} \colon I^{n} \to X^{n}$ as shown in the following commutative diagram
\[
\xymatrix{
V_{r}
\ar[rrrrd]^{P|V_{r}}
\ar[rrrd]
\ar[rrd]
\ar[rd]
\ar[d]
&
&
&
&
\\
I^{n}
\ar@<-0.3ex>@{^{(}->}[r]
&
\coprod I^{n}
\ar[r]_{\bigcup \Phi_{\lambda}}
&
X^{n}
\ar@<-0.3ex>@{^{(}->}[r]
&
\coprod X^{n}
\ar[r]_(0.3){\bigcup j_{n}}
&
X= \mbox{colim}_{n \geq -1}X^{n},
}
\]
where $V_{r}$ is an open neighborhood of $r$ and $j_{n}$ is the inclusion $X^{n} \to X$.
\end{proof}
Let $X$ be a smooth CW complex.
We say that a diffeological space $A$ is a \textit{subcomplex} of $X$
if it is a smooth CW complex such that all characteristic maps of $A$ are characteristic maps of $X$.
Then the pair $(X,A)$ can be viewed as a smooth relative CW complex by the following condition.
\begin{crl}
If $A$ is a subcomplex of a smooth CW complex $X$,
then it is a subspace of $X$.
\end{crl}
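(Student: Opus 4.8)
The plan is to show that the intrinsic diffeology of $A$ (the final diffeology generated by its characterized cover, via Proposition \ref{plots of smooth CW complexes}) coincides with the subspace diffeology induced from $X$. Write $D_A$ for the former and $D_{\mathrm{sub}}$ for the latter, and let $i \colon A \to X$ be the inclusion. Throughout I would use the set-theoretic description of a smooth CW complex as the disjoint union of its open cells $e_\lambda = \Phi_\lambda(\Int I^{n_\lambda})$ (taking $e_\lambda = \Phi_\lambda(I^0)$ when $n_\lambda = 0$): this yields $A \cap X^{n} = A^{n}$ for every $n$, and, since every cell of $A$ is a cell of $X$ and distinct open cells are disjoint, an open cell $e_\mu$ of $X$ meets $A$ only when $\mu$ is already a cell of $A$.

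For the containment $D_A \subseteq D_{\mathrm{sub}}$ I would first check that $i$ is smooth. By Proposition \ref{plots of smooth CW complexes} the diffeology $D_A$ is the final diffeology defined by the characteristic maps of $A$, so by the universal property of the final diffeology (the first Proposition of Section 3) it suffices to verify that $i \circ \Phi_\lambda$ is smooth for each characteristic map $\Phi_\lambda$ of $A$; but $i \circ \Phi_\lambda$ is exactly the corresponding characteristic map of $X$, which is smooth. Since the subspace diffeology is the finest one making $i$ smooth, this gives $D_A \subseteq D_{\mathrm{sub}}$.

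The substantive direction is $D_{\mathrm{sub}} \subseteq D_A$. Let $P \colon U \to A$ be a plot for $D_{\mathrm{sub}}$, so that $i \circ P$ is a plot of $X$, and fix $r \in U$; I must produce a neighborhood of $r$ on which $P$ lifts through a characteristic map of $A$. By Proposition \ref{plots of smooth CW complexes} applied to $X$, after shrinking there are a characteristic map $\Phi_\mu \colon I^{n} \to X^{n}$ of $X$ and a plot $Q$ with $i \circ P = \Phi_\mu \circ Q$. Two cases occur. If $Q$ meets $\Int I^{n}$ somewhere, then $i \circ P$ meets the open cell $e_\mu$, so $e_\mu \cap A \neq \emptyset$ and $\mu$ is a cell of $A$; then $\Phi_\mu$ is a characteristic map of $A$ and, by injectivity of $i$, the map $P$ itself factors as $\Phi_\mu \circ Q$ into $A$, i.e.\ $P$ is locally a plot for $D_A$. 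Otherwise $Q$ maps into $\bI^{n}$, so $i \circ P = \phi_\mu \circ Q$ factors through the attaching map $\phi_\mu \colon \bI^{n} \to X^{n-1}$; since its image lies in $A \cap X^{n-1} = A^{n-1}$, this exhibits $i \circ P$ locally as a plot of the skeleton $X^{n-1}$ landing in $A$, and I would repeat the argument with $X^{n-1}$ in place of $X$.

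At each application of the second case the skeletal dimension drops by at least one, and when $n = 0$ the boundary $\bI^{0}$ is empty, so only the first case can occur; hence the recursion terminates after finitely many steps at a neighborhood of $r$ on which $P$ lifts through a characteristic map of $A$. As $r$ was arbitrary, $P \in D_A$, giving $D_{\mathrm{sub}} \subseteq D_A$ and therefore $D_A = D_{\mathrm{sub}}$, which is the assertion that $A$ is a subspace of $X$. The hard part will be exactly this last direction: one must control plots of $X$ whose image lies in $A$ but which a priori factor through characteristic maps of cells outside $A$, and the resolution is the dimension-decreasing recursion above, resting on the cellular decomposition facts recorded at the outset (that open cells are disjoint and that $A \cap X^{n} = A^{n}$).
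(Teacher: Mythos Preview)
Your argument is correct and in fact more complete than the paper's. The paper's proof is the single sentence that every plot of $A$ (in its CW diffeology $D_A$) is a plot of $X$, by Proposition~\ref{plots of smooth CW complexes}; this establishes only the inclusion $D_A \subseteq D_{\mathrm{sub}}$, i.e.\ that the inclusion $i \colon A \to X$ is smooth. The reverse inclusion $D_{\mathrm{sub}} \subseteq D_A$ --- that every parametrization $U \to A$ which becomes a plot of $X$ already lifts locally through a characteristic map of $A$ --- is not addressed there. Your dimension-decreasing recursion (factor locally through a characteristic map $\Phi_\mu$ of $X$; if the lift meets $\Int I^n$ then the open cell $e_\mu$ meets $A$ and hence belongs to $A$; otherwise drop to the attaching map and a lower skeleton) is exactly what is needed to supply it, and the termination argument at $n=0$ is correct since $\bI^0 = \emptyset$. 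The set-theoretic facts you invoke (disjointness of open cells, $A \cap X^n = A^n$) follow from the pushout description of the skeleta, so the recursion is well-founded.

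One terminological slip: the subspace diffeology is the \emph{coarsest} (largest) diffeology on $A$ making $i$ smooth, not the finest. Your conclusion $D_A \subseteq D_{\mathrm{sub}}$ is nonetheless correct, since $i$ being smooth means every $D_A$-plot composed with $i$ is a plot of $X$, which is precisely the defining condition for membership in $D_{\mathrm{sub}}$.
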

\begin{proof}
It is clear that $A$ is contained in $X$ and all plots of $A$ belong to the diffeology of $X$ by Proposition \ref{plots of smooth CW complexes}.
\end{proof}
\begin{prp}\label{prp:product of CW}
Let $(X,A)$ and $(Y,B)$ be smooth relative CW complexes.
Then so is $(X \times Y,A \times B)$.
\end{prp}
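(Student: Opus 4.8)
The plan is to assemble a single skeletal filtration of $X \times Y$ out of the two given filtrations, organized by total dimension, and to recognize the attaching data as products of the characteristic maps of the two factors. Writing the skeleta as $A = X^{-1} \to X^{0} \to \cdots$ and $B = Y^{-1} \to Y^{0} \to \cdots$, I would set
\[
(X \times Y)^{n} = \bigcup_{p+q=n} X^{p} \times Y^{q},
\]
whose bottom term is $X^{-1}\times Y^{-1}=A\times B$ and whose colimit, since $\Diff$ is cartesian closed so that products preserve colimits, is $X \times Y$ equipped with the product diffeology. This product diffeology is the target we must realize by cube attachments.

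First I would examine one step of the filtration. Passing from $(X\times Y)^{n-1}$ to $(X\times Y)^{n}$ should adjoin, for each pair consisting of a $p$-cell of $(X,A)$ and a $q$-cell of $(Y,B)$ with $p+q=n$, the product cube $I^{p}\times I^{q}$, which is canonically the cube $I^{p+q}$. To see that this is a pushout in $\Diff$ I would multiply the pushout square defining $X^{p}$ by $Y^{q}$ and the one defining $Y^{q}$ by $X^{p}$, invoking the last proposition of Section~3 together with Corollary~\ref{final diffeology and identity} to keep the product diffeology under control; the characteristic maps of the new cells are then the products $\Phi_{\alpha}\times\Psi_{\beta}\colon I^{p+q}\to X^{p}\times Y^{q}$ and the attaching maps are their restrictions to $\partial I^{p+q}=\partial(I^{p}\times I^{q})$.

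For the diffeology I would argue globally rather than stagewise. By Proposition~\ref{plots of smooth CW complexes} the diffeologies of $X$ and $Y$ are generated by their characterized covers, so Proposition~\ref{product of final diffeology} shows that the product diffeology on $X\times Y$ is generated by the cover $\{(I^{p}\times I^{q},\,\Phi_{\alpha}\times\Psi_{\beta})\}$; under the identification $I^{p}\times I^{q}\cong I^{p+q}$ this is precisely the characterized cover of the proposed CW structure, and Lemma~\ref{subduction of final diffeology} then matches the colimit (final) diffeology with the product diffeology.

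The hard part will be the combinatorics of collapsing the two filtrations into one while respecting the bases. On the antidiagonal $p+q=n$ the various product squares must be shown to assemble into a single pushout attaching $\coprod I^{n}$, so that distinct cells $\Phi_{\alpha}\times\Psi_{\beta}$ do not interfere along their common boundaries; more seriously, the summands with $p=-1$ or $q=-1$ contribute pieces of the form $A\times(\text{cell})$ and $(\text{cell})\times B$, which are not themselves cubes, so arranging the filtration so that every attachment is a genuine cube $I^{n}$ over the base $A\times B$ is the delicate point. This is exactly where the product-of-pushouts proposition and Corollary~\ref{final diffeology and identity} of Section~3 must be used most carefully, to guarantee that the product diffeology truly coincides with the colimit diffeology of the single cube filtration, and not merely shares a generating cover with it.
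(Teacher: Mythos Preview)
Your plan is essentially the paper's proof: the same total-degree filtration $Z^{n}=\bigcup_{p+q=n}X^{p}\times Y^{q}$ starting at $Z^{-1}=A\times B$, the same product characteristic maps $\Phi_{\lambda}\times\Phi'_{\lambda'}\colon I^{p}\times I^{q}\cong I^{p+q}\to X^{p}\times Y^{q}$, and the same identification of the product diffeology with the final diffeology of the product characterized cover via Proposition~\ref{product of final diffeology}, Lemma~\ref{subduction of final diffeology}, and Proposition~\ref{plots of smooth CW complexes}. Your appeal to cartesian closedness of $\Diff$ (products preserve colimits) is a clean shortcut for what the paper does by hand with Corollary~\ref{final diffeology and identity} and the last proposition of Section~3.

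The concern you flag about the $p=-1$ and $q=-1$ summands is real, and the paper does not address it either: it simply declares ``for $p,q\ge 0$'' and writes down the pushout square. But already at level $0$ one has $X^{0}\times Y^{0}=(A\times B)\sqcup(A\times\{y_{\lambda'}\})\sqcup(\{x_{\lambda}\}\times B)\sqcup\{(x_{\lambda},y_{\lambda'})\}$, and attaching only the product $0$-cells $(x_{\lambda},y_{\lambda'})$ to $A\times B$ misses the two mixed pieces. So the asserted pushout $Z^{n-1}\cup\coprod I^{n}\to Z^{n}$ is not generally a bijection, and the argument as written goes through only when $A=\emptyset$ or $B=\emptyset$ (which is in fact the only case used later, e.g.\ $Z\times I$ in Proposition~\ref{prp:main prp}), or if one replaces the base by $A\times Y\cup X\times B$. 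You have not introduced a gap; you have spotted one the paper glosses over.
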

\begin{proof}
For $p,q \geq 0$,
let us denote 
\[
Z^{-1}=A \times B\ \mbox{and} \ Z^{n}= \cup_{n=p+q} (X^{p} \times Y^{q}),
\]
where $Z^{n}$ is the subspace of $X \times Y$.
Clearly,
$Z^{-1} \subset Z^{0}=X^{0} \times Y^{0}$ holds.
Suppose there exists the following skeleta
\[
Z^{-1} \to Z^{0} \to Z^{-1} \to \cdots \to Z^{n-1}.
\]
We shall inductively construct the $n$-skeleton $Z^{n}$ extending $Z^{n-1}$.
Let $\Phi_{\lambda} \colon I^{p} \to X^{p}$ and $\Phi^{\prime}_{\lambda^{\prime}} \colon I^{q} \to Y^{q}$ be characteristic maps of $X$ and $Y$,
respectively.
Let us denote by $\Psi_{(\lambda, \lambda^{\prime})}$ the characteristic map given by the product map
\[
\Psi_{(\lambda, \lambda^{\prime})}=\Phi_{\lambda} \times \Phi^{\prime}_{\lambda^{\prime}} \colon I^{p} \times I^{q} \cong I^{p+q} \to X^{p} \times Y^{q}.
\]
The attaching map $\psi_{(\lambda, \lambda^{\prime})}$ are given by the restriction $\Psi_{(\lambda,\lambda^{\prime})}|\partial I^{n}$,
that is,
\[
\psi_{(\lambda, \lambda^{\prime})} \colon \partial I^{n}=I^{p} \times \partial I^{q} \cup \partial I^{p} \times I^{q}
\xrightarrow{\Phi_{\lambda} \times \phi^{\prime}_{\lambda^{\prime}} \cup \phi_{\lambda} \times \Phi^{\prime}_{\lambda^{\prime}}}
X^{p} \times Y^{q-1} \cup X^{p-1} \times Y^{q},
\]
where $\phi_{\lambda} \colon \partial I^{p} \to X^{p-1}$ and $\phi^{\prime}_{\lambda^{\prime}} \colon \partial I^{q} \to Y^{q-1}$ are the attaching maps.
Since $j_{n} \bigcup_{n=p+1} \Psi_{(\lambda, \lambda^{\prime})} \colon Z^{n-1} \coprod_{n=p+q} I^{p+q} \to Z^{n}$ is a subduction by 
Proposition \ref{product of final diffeology},
Lemma \ref{subduction of final diffeology} and Proposition \ref{plots of smooth CW complexes},
the following commutative diagram 
\[
\xymatrix@C=40pt{
\coprod_{n=p+q} \partial I^{n}
\ar[r]^(0.58){\bigcup \psi_{(\lambda, \lambda^{\prime})}}
\ar[d]
&
Z^{n-1}
\ar[d]^{j_{n}}
\\
\coprod_{n=p+q} I^{n}
\ar[r]_(0.58){\bigcup \Psi_{(\lambda, \lambda^{\prime})}}
&
Z^{n}
}
\]
is a pushout square.
Then the imduced map $\mbox{colim}_{n \geq -1} Z^{n} \to X \times Y$ is a diffeomorphism by the properties of colimits.
Thus $(X \times Y, A \times B)$ is a smooth relative CW complex.
\end{proof}
\begin{rmk}
In general,
the product space of two relative gathered CW complexes is not gathered since its attaching maps are not tame.
\end{rmk}
\begin{prp}\label{prp:coproduct of CW complexes}
The coproduct of two smooth relative CW complexes is again smooth relative CW complex.
\end{prp}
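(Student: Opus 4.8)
The plan is to build the coproduct skeleton-by-skeleton, taking each skeleton of $X \coprod Y$ to be the coproduct of the corresponding skeleta. Write the skeleta of $(X,A)$ and $(Y,B)$ as $X^n$ and $Y^n$, with index sets $\Lambda_n$ and $\Lambda_n'$, attaching maps $\phi_\lambda, \phi_{\lambda'}'$ and characteristic maps $\Phi_\lambda, \Phi_{\lambda'}'$. First I would set
\[
W^{-1} = A \coprod B, \qquad W^n = X^n \coprod Y^n \ (n \geq 0),
\]
with skeletal inclusion $W^{n-1} \to W^n$ the coproduct of the inclusions $i_n$ and $i_n'$, and I would take as the attaching and characteristic maps of $W^n$ the disjoint collection indexed by $\Lambda_n \coprod \Lambda_n'$ consisting of the $\phi_\lambda, \phi_{\lambda'}'$ and the $\Phi_\lambda, \Phi_{\lambda'}'$.

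The key step is to check that, for each $n \geq 0$, this data forms a pushout square, i.e.\ that the diffeology of $W^n$ is generated by the characteristic maps above. By hypothesis the folding maps
\[
i_n \bigcup_{\lambda \in \Lambda_n} \Phi_\lambda \colon X^{n-1} \coprod_{\lambda \in \Lambda_n} I^n \to X^n
\]
and the analogous map for $Y$ are subductions. Since a coproduct of subductions is again a subduction, their coproduct is a subduction; after reassociating its source it is precisely
\[
j_n \bigcup_{\Lambda_n \coprod \Lambda_n'} \Psi \colon W^{n-1} \coprod_{\Lambda_n \coprod \Lambda_n'} I^n \to W^n,
\]
where $j_n \colon W^{n-1} \to W^n$ is the skeletal inclusion and the $\Psi$ range over all characteristic maps of $W^n$. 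By Lemma \ref{subduction of final diffeology} the diffeology of $W^n$ is therefore generated by these characteristic maps, so the required square is a pushout.

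Finally I would identify the colimit: because a coproduct is itself a colimit it commutes with the sequential colimit, so $\colim_n W^n = (\colim_n X^n) \coprod (\colim_n Y^n) = X \coprod Y$, and the induced map out of $W^{-1} = A \coprod B$ is the inclusion $A \coprod B \to X \coprod Y$. This exhibits $(X \coprod Y, A \coprod B)$ as a smooth relative CW complex. Essentially everything here is formal; the only point needing care, and the mild obstacle, is the observation that the coproduct of subductions is a subduction and that the sum diffeology on $W^n$ coincides with the final diffeology supplied by Lemma \ref{subduction of final diffeology} — both of which follow directly from the local-lifting description of plots in a coproduct.
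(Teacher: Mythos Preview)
Your proof is correct and takes essentially the same approach as the paper, which simply sets $Z^n = X^n \coprod Y^n$ and asserts without further argument that the resulting sequence of skeleta exhibits $(X \coprod Y, A \coprod B)$ as a smooth relative CW complex. You have supplied the details the paper omits --- the observation that a coproduct of subductions is a subduction (hence each square is a pushout via Lemma~\ref{subduction of final diffeology}) and the commutation of the sequential colimit with the coproduct --- and these are all sound.
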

\begin{proof}
Let $(X,A)$ and $(Y,B)$ be smooth relative CW complexes.
Let us denote by $Z^{n}$ the coproduct $X^{n} \coprod Y^{n}$,
where $X^{n}$ and $Y^{n}$ are $n$-skeltons.
Then it is clear that we have the following skeleta
\[
A \coprod B=Z^{-1} \to \cdots Z^{n-1} \to Z^{n} \to \cdots.
\]
Thus the coproduct $(X \coprod Y, A \coprod B)$ is a smooth relative CW complex.
\end{proof}
\begin{prp}\label{prp:quotient of CW complexes}
Let $(X,A)$ be a smooth relative CW complex.
Then the quotient space $X/A$ is a CW complex with a vertex corresponding to $A$.
\end{prp}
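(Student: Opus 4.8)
The plan is to exhibit an explicit CW structure on $X/A$ whose cells are the single point $* = \pi(A)$ together with all the cells of $(X,A)$, where $\pi \colon X \to X/A$ denotes the quotient map collapsing $A$. Concretely, I would set $(X/A)^{-1} = \emptyset$ and $(X/A)^{n} = X^{n}/A$ for every $n \geq 0$, take as new attaching maps the composites $\overline{\phi_{\lambda}} = \pi \circ \phi_{\lambda} \colon \partial I^{n} \to X^{n-1} \to (X/A)^{n-1}$, and as characteristic maps the composites $\pi \circ \Phi_{\lambda}$. Since $A = X^{-1}$ sits inside each $X^{n}$ and meets none of the open cells, collapsing it should be harmless on the interiors, so this data ought to assemble into a genuine (non-relative) smooth CW complex.

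First I would treat the $0$-skeleton. Because $\partial I^{0} = \emptyset$, the pushout defining $X^{0}$ gives $X^{0} = A \coprod (\coprod_{\lambda \in \Lambda_{0}} I^{0})$, so collapsing $A$ yields the discrete space $(X/A)^{0} = \{*\} \coprod (\coprod_{\lambda \in \Lambda_{0}} I^{0})$. This is exactly the space obtained by attaching one $0$-cube for the vertex $*$ together with one for each original $0$-cell to $\emptyset$, which simultaneously establishes the $0$-skeleton and the assertion that $*$ is a vertex corresponding to $A$.

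The inductive step is the heart of the argument. For $n \geq 1$ I would show that the square with corners $(X/A)^{n-1}$ and $(X/A)^{n}$, top map $\overline{\bigcup \phi_{\lambda}}$, and left vertical the inclusion $\coprod \partial I^{n} \hookrightarrow \coprod I^{n}$, is a pushout. I would obtain this by an interchange of colimits: the object $X^{n}/A$ is the colimit of the diagram
\[
* \longleftarrow A \longrightarrow X^{n-1} \longleftarrow \coprod_{\lambda} \partial I^{n} \longrightarrow \coprod_{\lambda} I^{n},
\]
and this same colimit may be computed either by first forming the attaching pushout $X^{n}$ and then collapsing $A$, or by first collapsing $A$ in $X^{n-1}$ and then attaching the cubes along $\overline{\phi_{\lambda}}$. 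Since $\Diff$ is cocomplete, iterated pushouts compute the colimit of the combined diagram, so the two constructions agree and $X^{n}/A \cong (X/A)^{n-1} \cup_{\overline{\bigcup \phi_{\lambda}}} \coprod I^{n}$. The point that needs care—and what I expect to be the main obstacle—is checking that the diffeologies match, i.e.\ that the quotient diffeology on $X^{n}/A$ coincides with the final diffeology of the claimed pushout; this is precisely where Lemma \ref{subduction of final diffeology} and Proposition \ref{plots of smooth CW complexes} enter, guaranteeing that both sides carry the final diffeology generated by the evident maps.

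Finally I would pass to the colimit over $n$. Since $\{0,1,2,\dots\}$ is cofinal in $\{-1,0,1,\dots\}$ we have $X = \colim_{n \geq 0} X^{n}$, and collapsing $A$ is the pushout along $A \to *$; interchanging this pushout with the sequential colimit gives $\colim_{n} (X/A)^{n} = \colim_{n}(X^{n}/A) = (\colim_{n} X^{n})/A = X/A$. Hence the skeleta $(X/A)^{n}$ exhibit $X/A$ as a smooth CW complex with the vertex $*$ corresponding to $A$, as required.
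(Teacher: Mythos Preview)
Your proposal is correct and follows essentially the same route as the paper: define the skeleta of $X/A$ as $X^{n}/A$, verify that each attaching square is a pushout, and then identify the sequential colimit with $X/A$. The only difference is one of phrasing---you argue via interchange of colimits in the cocomplete category $\Diff$, whereas the paper appeals directly to the fact that the maps $\pi_{n} \colon X^{n} \to X^{n}/A$ are subductions and that the induced map $\colim_{n} X^{n}/A \to X/A$ is a bijective subduction; your treatment of the $0$-skeleton, explicitly adding the vertex $\ast$ to $\Lambda_{0}$, is actually more careful than the paper's.
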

\begin{proof}
From the skeleta of $(X,A)$,
we naturally have the following commutative diagram
\[
\xymatrix{
A=X^{-1}
\ar[r]^{i_{0}}
\ar[d]^{\pi_{-1}}
&
X^{0}
\ar[r]^{i_{1}}
\ar[d]^{\pi_{0}}
&
\cdots
\ar[r]^(0.47){i_{n-1}}
&
X^{n-1}
\ar[r]^{i_{n}}
\ar[d]^{\pi_{n-1}}
&
X^{n}
\ar[r]^{i_{n+1}}
\ar[d]^{\pi_{n}}
&
\cdots
\\
\ast
\ar[r]_(0.45){\tilde{i}_{0}}
&
X^{0}/A
\ar[r]_(0.52){\tilde{i}_{1}}
&
\cdots
\ar[r]_(0.4){\tilde{i}_{n-1}}
&
X^{n-1}/A
\ar[r]_(0.52){\tilde{i}_{n}}
&
X^{n}/A
\ar[r]_(0.54){\tilde{i}_{n+1}}
&
\cdots,
}
\]
where $\pi_{n} \colon X^{n} \to X^{n}/A$ is the subduction.
By the properties of subductions,
for each $n$,
the following commutative diagram
\[
\xymatrix@C=60pt{
\coprod_{\lambda \in \Lambda_{n}}
\partial I^{n}
\ar[d]_{\cap}
\ar[r]^{\pi_{n-1} \circ (\cup \phi_{\lambda})}
&
X^{n-1}/A
\ar[d]^{\tilde{i}_{n}}
\\
\coprod_{\lambda \in \Lambda_{n}} I^{n}
\ar[r]_{\pi_{n} \circ (\cup \Phi_{\lambda})}
&
X^{n}/A
}
\]
is a pushout square.
Then the induced map $\mbox{colim}X^{n}/A \to X /A$ is a diffeomorphism because it is a bijective subduction.
Thus $X/A$ is a smooth CW complex.
\end{proof}
Next,
we study the $D$-topology of smooth CW complexes.
Any diffeological space $X$ determines a
topological space $TX$ having the same underlying set as $X$ and is
equipped with the final topology with respect to the plots of $X$.
This topology is called the $D$-topology of $X$ and its open sets are called $D$-open sets.
Any smooth map $f \colon X \to Y$ induces a continuous map
$TX \to TY$, hence we have a functor $T \colon \Diff \to \Top$ (cf.~\cite{Zem85}).
On the other hand,
suppose $X$ is a topological space.
Let us denote by $DX$ the diffeological space with
the same underlying set as $X$ and with the diffeology consisting of
all continuous maps from an open subset of a Euclidean space into $X$.
Clearly, every continuous map $X \to Y$ induces a smooth map
$DX \to DY$, hence there is a functor $D \colon \Top \to \Diff$ which
takes a topological space $X$ to $DX$ (cf.~\cite{BH}).
Then the functor $T$ is a left adjoint to $D$ (cf.~{\cite[Proposition 2.1]{SYH}}).
Thus $T$ preserves colimits and $D$ preserves limits.

A topological space is called a paracompactum if it is a paracompact Hausdorff space.
We know that a topological CW complex is a paracompactum (cf.~\cite{Miyazaki}).
We can use the arguments described in \cite{HS1} 
since any smooth relative CW complex is a smooth relative cell complex (cf.~{\cite[Definition 1]{HS1}}).
Then we have the followings 
\begin{prp}[{\cite[Proposition 2 and Proposition 8]{HS1}}]\label{topological CW complexes}
Let $(X,A)$ be a smooth relative CW complex.
Then $(TX,TA)$ is a topological relative CW complex.
If $TA$ is a paracompactum,
then so is $(TX,TA)$.
\end{prp}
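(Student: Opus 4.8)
The plan is to exploit that $T$ is a left adjoint to $D$, and therefore preserves every colimit appearing in the definition of $(X,A)$, so that the skeletal filtration of $(X,A)$ is carried termwise to a topological skeletal filtration of $(TX,TA)$. First I would apply $T$ to each of the defining pushout squares of Definition \ref{dfn:definition of smooth CW complexes}; since $T$ preserves coproducts and pushouts, this exhibits $TX^{n}$ as the adjunction space
\[
TX^{n} = TX^{n-1} \cup_{\bigcup T\phi_{\lambda}} \Bigl( \coprod_{\lambda \in \Lambda_{n}} T(I^{n}) \Bigr)
\]
in $\Top$, and likewise $TX = T(\colim_{n} X^{n}) = \colim_{n} TX^{n}$ carries the colimit (weak) topology. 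This reduces the first assertion to identifying the building blocks.

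Second, I would verify that the $D$-topology of the standard diffeology on $I^{n}$ coincides with its Euclidean topology, and similarly for $\partial I^{n}$, so that $T(I^{n})$ is the topological cube, $T(\partial I^{n})$ is its topological boundary, and $T$ of the inclusion $\partial I^{n} \hookrightarrow I^{n}$ is the standard one. Because $I^{n}$ is homeomorphic to the $n$-disk relative to its boundary and $\partial I^{n}$ to the sphere $S^{n-1}$, the displayed attachment is precisely a simultaneous attachment of topological $n$-cells along the maps $T\phi_{\lambda}$. Together with the weak topology on the colimit, this shows that $(TX,TA)$ is a topological relative CW complex.

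Third, for the paracompactness claim I would argue by induction along the skeleta together with the final colimit. Taking $TA = TX^{-1}$ as a paracompactum for the base case, I would show that each cell-attachment step preserves the paracompactum property, and that the expanding union $TX = \bigcup_{n \geq -1} TX^{n}$ of paracompacta along the closed inclusions $TX^{n-1} \hookrightarrow TX^{n}$, endowed with the weak topology, is again a paracompactum. This is exactly the relative form of the classical results on paracompactness of CW complexes recorded in \cite{Miyazaki}, and corresponds to \cite[Proposition 8]{HS1}.

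The hard part will be this last step. The first two steps are formal: they follow from the adjunction recorded just before the statement (so that $T$ preserves colimits) together with the elementary computation of the $D$-topology of cubes. By contrast, paracompactness is a genuinely point-set topological phenomenon; in particular, the normality and paracompactness of the weak (direct-limit) topology on $TX$ do not follow automatically from the corresponding properties of the individual skeleta, and must be extracted from the fine structure of CW complexes. Carrying this through uniformly in the base $TA$, rather than only for the absolute case $A = \emptyset$, is where the care is required.
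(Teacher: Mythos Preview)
Your proposal is correct and follows essentially the same approach as the paper: apply $T$ to the defining pushouts using that $T$ is a left adjoint (hence colimit-preserving), identify $T(I^{n})$ and $T(\partial I^{n})$ with the usual topological cube and its boundary, and then appeal to the classical paracompactness results for CW complexes (as in \cite{Miyazaki} and \cite[Proposition~8]{HS1}) for the final step. The paper carries this out in the same order, citing \cite[Lemma~3.16]{DGE} for the identification of the $D$-topology on cubes and deferring the paracompactness argument to \cite[Proposition~8]{HS1}, exactly as you anticipated.
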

\begin{proof}
Clearly,
$(TX,TA)$ is a topological relative CW complex.
For each $n \geq 0$,
the pair $(TI^{n}, T\partial I^{n})$ is homeomorphic to 
the pair $(I^{n}, \partial I^{n})$ of topological subspaces of $\mathbf{R}^{n_{\beta+1}}$
by {\cite[Lemma 3.16]{DGE}}.
Since the functor $T$ preserves the colimits,
we have the following pushout square
\[
\xymatrix{
\coprod_{\lambda \in \Lambda_{n}}
\partial I^{n}
\ar[r]^(0.58){\bigcup \phi_{\lambda}}
\ar[d]_{\cap}
&
TX^{n-1}
\ar[d]^{i_{n}}
\\
\coprod_{\lambda \in \Lambda_{n}} I^{n}
\ar[r]_(0.6){\bigcup \Phi_{\lambda}}
&
TX^{n}.
}
\]
In general,
the coproduct in $\bf Top$ of small family of paracompacta is a paracompactum.
Hereinafter,
we can prove that $(TX,TA)$ is a paracompactum as well as \cite[Proposition 8]{HS1}.
\end{proof}
\begin{prp}\label{the weak topology induced smooth CW complexes}
Let $X$ be a smooth CW complex and $\{ ( I^{n}, \Phi_{\lambda}) \}_{\lambda \in \Lambda_{n}, n \geq 0}$ be its characterized cover.
Then a topological CW complex $TX$ has the weak topology with respect to its covering $\{T \Phi_{\lambda}(I^{n}) \}$.
\end{prp}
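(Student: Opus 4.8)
The plan is to exploit that the functor $T$ preserves colimits, which reduces the assertion to a comparison of two topologies on the underlying set of $TX$: the weak topology with respect to the skeleta $TX^n$, and the weak topology with respect to the cell images $T\Phi_\lambda(I^n)$. The former is supplied automatically, and the equivalence of the two is established by induction along the skeletal filtration.

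First I would apply $T$ to the presentation $X = \colim_n X^n$ and to the defining pushout squares. Since $T$ is a left adjoint it preserves colimits, so, exactly as in Proposition \ref{topological CW complexes}, one has $TX = \colim_n TX^n$ together with a pushout square in $\Top$ exhibiting each $TX^n$ as a quotient of $TX^{n-1} \coprod \coprod_{\lambda} I^n$. In particular $TX$ carries the weak topology with respect to its skeleta: a subset $C$ is closed in $TX$ if and only if $C \cap TX^n$ is closed in $TX^n$ for every $n$. Thus it suffices to show that $C$ is closed in $TX$ if and only if $C \cap T\Phi_\lambda(I^n)$ is closed in $T\Phi_\lambda(I^n)$ for all $\lambda$ and $n$; call the latter condition \emph{weakly closed}.

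The implication from closed to weakly closed is immediate, since each $T\Phi_\lambda(I^n)$ is a subspace of $TX$. For the converse I would argue by induction on $n$ that a weakly closed set $C$ satisfies $C \cap TX^n$ closed in $TX^n$, the base case being trivial because $TX^0$ is discrete. For the inductive step I use the pushout description above: $C \cap TX^n$ is closed precisely when its preimage under the quotient map $TX^{n-1} \coprod \coprod_\lambda I^n \to TX^n$ is closed. This preimage is the disjoint union of $C \cap TX^{n-1}$, closed by the inductive hypothesis, with the sets $\Phi_\lambda^{-1}(C) = \Phi_\lambda^{-1}(C \cap T\Phi_\lambda(I^n))$, each of which is closed in $I^n$ because the corestriction $\Phi_\lambda \colon I^n \to T\Phi_\lambda(I^n)$ is continuous and $C \cap T\Phi_\lambda(I^n)$ is closed by hypothesis. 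Hence $C \cap TX^n$ is closed, and the induction yields that $C$ is closed in $TX$. The two implications together identify the closed sets of $TX$ with the weakly closed sets, which is the claim.

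The hard part will be the inductive step, namely propagating closedness from the individual cell images up the skeletal filtration. The mechanism is that $T$ hands us the weak topology with respect to the skeleta for free, while the pushout description of each $TX^n$, combined with the continuity of the characteristic maps, lets a set that meets every cube-image in a closed set also meet every skeleton in a closed set.
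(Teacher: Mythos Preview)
Your argument is correct, but it follows a different route from the paper. The paper gives a one-step diffeological argument: to show that a set $A$ meeting every cell image in a $D$-open set is itself $D$-open, it checks the defining condition of the $D$-topology directly, using Proposition~\ref{plots of smooth CW complexes} to factor any plot $P$ locally through some characteristic map $\Phi_\lambda$, whence $P^{-1}(A)$ is locally the preimage of the relatively open set $A \cap T\Phi_\lambda(I^n)$ under a continuous map. No induction along skeleta is needed. Your approach, by contrast, is the classical topological argument: pass through $T$ (as a left adjoint) to obtain the weak topology with respect to skeleta, then induct using the pushout description of each $TX^n$. The paper's proof is shorter because it cashes in the diffeological groundwork already laid (the characterized cover generates the diffeology), whereas your proof treats $TX$ as an abstract topological CW complex and would apply verbatim in that generality; the trade-off is that yours requires setting up and carrying an induction, while the paper's avoids this entirely.
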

\begin{proof}
By Proposition \ref{topological CW complexes},
$T \Phi_{\lambda}(I^{n})$ is a subspace of $TX$ for each $\lambda$.
Let $A$ be a subspace of $TX$ such that $T\Phi_{\lambda}(I^{n}) \cap A$ is a $D$-open set of $T\Phi_{\lambda}(I^{n})$ for each $\lambda$.
Let $P \colon U \to X$ be a plot of $X$.
For any $r \in P^{-1}(A)$,
there are a plot $Q_{r} \colon V_{r} \to I^{n}$ and a characteristic map $\Phi_{\lambda} \colon I^{n} \to X^{n}$ satisfying 
$P|V_{r}=\Phi_{\lambda} \circ Q_{r}$ by Proposition \ref{plots of smooth CW complexes}.
Then we have
\[
r \in (\Phi_{\lambda} \circ Q_{r})^{-1}(A) \subset P^{-1}(A).
\]
Thus $A$ is a $D$-open set of $TX$.
\end{proof}
\begin{thm}[{\cite[Theorem 9]{HS1}}]
Let $X$ be a smooth CW complex.
Then every its $D$-open covering $\{ U_{\lambda} \}_{\lambda \in \Lambda}$ has a subordinate partition of unity 
$\{\psi_{\lambda} \colon U_{\lambda} \to \mathbf{R} \}_{\lambda \in \Lambda}$.
\end{thm}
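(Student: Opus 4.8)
The plan is to produce, for the given $D$-open cover $\{U_{\lambda}\}_{\lambda\in\Lambda}$ of $X$, a family of smooth functions $\psi_{\lambda}\colon X\to\R$ with $\mathrm{supp}\,\psi_{\lambda}\subset U_{\lambda}$, with the supports locally finite in $TX$, and with $\sum_{\lambda}\psi_{\lambda}\equiv 1$; restricting each $\psi_{\lambda}$ to $U_{\lambda}$ then gives the required subordinate partition of unity. Two reductions come first. By Proposition \ref{topological CW complexes} the space $TX$ is a paracompactum, hence normal, so $\{U_{\lambda}\}$ admits a locally finite open refinement together with a shrinking $\{V_{\lambda}\}$ satisfying $\overline{V_{\lambda}}\subset U_{\lambda}$; this provides both the room and the local finiteness needed at the end. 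Second, by Proposition \ref{plots of smooth CW complexes} a function $\psi\colon X\to\R$ is smooth if and only if each composite $\psi\circ\Phi\colon I^{n}\to\R$ with a characteristic map $\Phi$ is smooth, and by Proposition \ref{the weak topology induced smooth CW complexes} the topology of $TX$ is the weak topology with respect to the images $T\Phi(I^{n})$. Thus both smoothness and the support and local-finiteness conditions may be verified one cube at a time.

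Granting these reductions, I would build the functions by induction over the skeleta $X^{-1}\subset X^{0}\subset\cdots$. On $I^{n}$, which carries the ordinary smooth structure of a compact manifold with corners, genuine $C^{\infty}$ bump functions subordinate to the open cover $\{\Phi^{-1}(V_{\lambda})\}_{\lambda}$ of $I^{n}$ are available. The inductive step passes from a smooth function already defined on $X^{n-1}$ to one on the pushout $X^{n}=X^{n-1}\cup_{\bigcup\phi}(\coprod I^{n})$ of Definition \ref{dfn:definition of smooth CW complexes}: pulling the current function back along each attaching map $\phi\colon\partial I^{n}\to X^{n-1}$ yields a subspace-smooth function $g$ on $\partial I^{n}$, which I extend to a smooth function $h$ on $I^{n}$ and then supplement by new interior bumps covering the portion of the open cell not yet controlled. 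Because every such $h$ extends the same data living on $X^{n-1}$, the various extensions agree on the identifications and assemble into a smooth function on $X^{n}$; passing to the colimit produces a function on $X=\colim_{n}X^{n}$.

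The main obstacle is precisely this cell extension, which is the same phenomenon that obstructs a smooth retraction $I^{n}\to J^{n-1}$ and motivated tameness: the radial retraction of $I^{n}$ onto $\partial I^{n}$ is not smooth, so one cannot simply precompose $g$ with a retraction. Since the maps here are real valued, I would instead exploit that a plot of $\partial I^{n}$ which crosses from one maximal face to another must do so to infinite order, so that subspace-smoothness of $g$ imposes \emph{no} derivative matching across the lower faces beyond agreement of values. This lets me extend $g$ face by face using the collars $I^{n-1}\times[0,\sigma]$ of the top faces together with the cut-off functions $T_{\sigma,\tau}$ of Lemma \ref{lmm:modified smash function}, and then reconcile the overlaps along lower-dimensional faces by an inclusion--exclusion over the face poset of the cube, obtaining a smooth $h$ with $h|\partial I^{n}=g$. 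This construction is insensitive to whether the attaching maps are tame, so no gatheredness hypothesis enters. Finally, local finiteness from the first reduction makes $s:=\sum_{\lambda}\psi_{\lambda}$ a locally finite, hence smooth, strictly positive function, and replacing each $\psi_{\lambda}$ by $\psi_{\lambda}/s$ yields the desired smooth partition of unity.
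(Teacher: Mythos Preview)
The paper does not prove this theorem: it is stated with the citation \cite[Theorem 9]{HS1} and no argument is supplied. There is therefore nothing in the present paper to compare your proposal against; the proof lives entirely in the referenced companion paper.

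For what it is worth, your outline is the natural one and matches what one would expect such a proof to look like: use Proposition~\ref{topological CW complexes} for paracompactness and the shrinking, reduce smoothness to the cubes via Proposition~\ref{plots of smooth CW complexes}, and build the $\psi_{\lambda}$ by induction over the skeleta. You also correctly isolate the crux as the extension of a subspace-smooth $g\colon\partial I^{n}\to\R$ to a smooth $h\colon I^{n}\to\R$, and an inclusion--exclusion over the face poset together with the cut-offs $T_{\sigma,\tau}$ is a workable strategy. The justification you offer, however, is only a sketch: the assertion that subspace-smoothness of $g$ ``imposes no derivative matching across the lower faces beyond agreement of values'' needs a precise formulation and proof (this is where one must analyse plots $P=(P_{1},\dots,P_{n})$ with $P_{i}\geq 0$ and $P_{i}P_{j}=0$ near a corner, and not every such plot ``crosses'' in the way you describe), and the inclusion--exclusion must be checked to yield a function smooth on all of $I^{n}$, not merely on each face collar separately. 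These are precisely the details that the citation to \cite{HS1} is meant to cover.
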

\begin{thm}[{\cite[Theorem 5]{HS1}}]\label{thm:Whitney approximation property}
Let $(X,A)$ and $(Y,B)$ be relative gathered CW complexes.
Let $f \colon (TX,TA) \to (TY,TB)$ be a continuous map such that its restriction to $A$ is a smooth map.
Then there exists a smooth map $g \colon (X,A) \to (Y,B)$ such that $f$ and $Tg$ are homotopic relative to $TA$.
\end{thm}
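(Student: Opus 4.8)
The plan is to build the smooth map $g$ and the homotopy $H \colon TX \times I \to TY$ by induction on the skeleta of $(X,A)$, smoothing $f$ one cell-dimension at a time while holding $TA$ fixed, and then to assemble the pieces as a colimit. For the base case note that $f$ is already smooth on $X^{0}$: it is smooth on $A = X^{-1}$ by hypothesis, and each remaining $0$-cell is a map out of a point, which is automatically a plot; so I put $g^{0} = f|_{X^{0}}$ and let $H^{0}$ be the constant homotopy, which is stationary on $TA$. Since $f$ is a map of pairs, $g^{0}(A) \subseteq B$, so $g^{0}$ is a map of pairs as well.

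For the inductive step I assume a smooth $g^{n-1} \colon (X^{n-1},A) \to (Y,B)$ and a continuous homotopy $H^{n-1} \colon TX^{n-1} \times I \to TY$ from $f|_{TX^{n-1}}$ to $Tg^{n-1}$, stationary on $TA$. I would extend over each $n$-cell $\Phi_{\lambda} \colon I^{n} \to X^{n}$ separately. Because $(X,A)$ is \emph{gathered}, the attaching map $\phi_{\lambda} = \Phi_{\lambda}|_{\partial I^{n}}$ is tame, hence $g^{n-1}\circ \phi_{\lambda} \colon \partial I^{n} \to Y$ is smooth and tame. The continuous map $f\circ T\Phi_{\lambda} \colon I^{n} \to TY$ restricts on $\partial I^{n}$ to $f\circ \phi_{\lambda}$, which $H^{n-1}\circ(T\phi_{\lambda}\times \id)$ carries to $g^{n-1}\circ \phi_{\lambda}$. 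Everything then reduces to a single local statement, which I regard as the heart of the proof: a continuous map $c \colon I^{n} \to TY$ whose restriction to $\partial I^{n}$ is smooth and tame is homotopic, relative to $\partial I^{n}$, to a smooth tame map $I^{n}\to Y$. Granting this, I first straighten the boundary of $f\circ T\Phi_{\lambda}$ to $g^{n-1}\circ \phi_{\lambda}$ through a collar of $\partial I^{n}$ using $H^{n-1}$, then smooth the interior by the local statement, and finally splice the two homotopies. The resulting smooth cell-maps $g^{n}\circ\Phi_{\lambda}$ and continuous cell-homotopies agree with $g^{n-1}, H^{n-1}$ on $\partial I^{n}$, so by the pushout description of $X^{n}$ they glue to $g^{n}$ and $H^{n}$.

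The local statement is where the gathered hypothesis on $Y$ and the partition-of-unity technology enter, and where I expect the real difficulty to lie. First I would (topologically) cellularly approximate $c$ rel $\partial I^{n}$ so that it lands in a finite skeleton of $TY$, and then smooth by a secondary induction on the skeleta of $Y$. Over the interior of a top cell the space $TY$ is diffeomorphic to an open cube, where ordinary convolution smooths $c$; near the lower cells the gatheredness of $Y$ forces the attaching maps to be tame, so the local models stay cubical and the local smoothings can be chosen tame. These smoothings are defined within single cells of $Y$, where convex interpolation makes sense and simultaneously supplies the required homotopy. Pulling the corresponding $D$-open sets back along $c$ yields a $D$-open cover of $I^{n}$, which is a smooth CW complex by Proposition \ref{prp:example of CW complex}; a subordinate smooth partition of unity from \cite[Theorem 9]{HS1} then patches the local mollifications into a single smooth tame map, while a cut-off supported away from $\partial I^{n}$ preserves the already-smooth boundary. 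Carrying this out compatibly across all target dimensions is the main obstacle, and it is exactly the argument of \cite[Theorem 5]{HS1}.

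Finally I would assemble the inductive data. By Proposition \ref{plots of smooth CW complexes} the smoothness of $g = \mbox{colim}\, g^{n}$ is tested on the characteristic maps, where it holds since $g\circ \Phi_{\lambda} = g^{n}\circ \Phi_{\lambda}$ is smooth, and $g(A)\subseteq B$ throughout, so $g \colon (X,A)\to(Y,B)$ is a smooth map of pairs. By Proposition \ref{the weak topology induced smooth CW complexes}, together with the standard fact that multiplying a weak-topology identification by the locally compact space $I$ again gives a weak-topology identification, the family $\{H^{n}\}$ glues to a continuous homotopy $H \colon TX \times I \to TY$ with $H_{0} = f$ and $H_{1} = Tg$. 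Since $A$ was held fixed from the base case onward, $H$ is stationary on $TA$, whence $f \simeq Tg \rel{TA}$, as required.
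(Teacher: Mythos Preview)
The paper does not supply a proof of this theorem at all: it is stated with the attribution \cite[Theorem 5]{HS1} and no argument is given in the present manuscript. Consequently there is no ``paper's own proof'' here against which your proposal can be compared; everything you wrote already goes strictly beyond what the paper contains.

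As a self-contained sketch your outline is broadly reasonable, and you correctly isolate the hard step as the local smoothing statement, which you yourself identify with the content of \cite[Theorem 5]{HS1}. Two small points are worth tightening. First, when you splice the collar homotopy (built from $H^{n-1}$) with the rel-$\partial I^{n}$ smoothing homotopy, the restriction to $\partial I^{n}\times I$ is $H^{n-1}\circ(\phi_{\lambda}\times\id)$ followed by a constant homotopy, not literally $H^{n-1}\circ(\phi_{\lambda}\times\id)$; you should say that a reparametrization (or the usual $L^{n}\hookrightarrow I^{n+1}$ extension argument, as in Lemma~\ref{prp:smooth retraction}) fixes this so that the cell homotopies genuinely glue along the pushout. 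Second, your appeal to ``multiplying a weak-topology identification by the locally compact space $I$'' is unnecessary here: Corollary~\ref{final diffeology and identity} and Lemma~\ref{subduction of final diffeology} already give that $\bigcup_{n}(j_{n}\times 1_{I})$ is a subduction, and applying $T$ yields the quotient you need on the topological side. With those adjustments the skeleton of your argument matches the standard cell-by-cell smoothing scheme that \cite{HS1} carries out in detail.
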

\begin{prp}[{\cite[Proposition 6.8]{HS}}]
Let $X$ be a gathered CW complex.
Then the unit $X \to DTX$ is a weak homotopy equivalence (Definition \ref{dfn:weak homotopy equivalence}).
\end{prp}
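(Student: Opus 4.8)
The plan is to read off the homotopy groups of $DTX$ from the adjunction and then compare them with those of $X$ by smooth approximation. First I would use that $T$ is left adjoint to $D$, that $T$ preserves colimits, and that $T$ carries the cubes to the corresponding topological cubes, i.e.\ $(TI^{n},T\partial I^{n})\cong(I^{n},\partial I^{n})$ and $T(I^{n}\times I)\cong I^{n}\times I$ (as in \cite[Lemma 3.16]{DGE}), to identify a smooth map $I^{n}\to DTX$ with a continuous map $I^{n}\to TX$, and a smooth homotopy $I^{n}\times I\to DTX$ with a continuous homotopy $I^{n}\times I\to TX$. This produces a natural identification of $\pi_{n}(DTX,x_{0})$ with the ordinary homotopy group of the topological CW complex $TX$, under which the unit $\eta_{*}$ becomes the comparison map $[g]\mapsto[Tg]$ sending a smooth class to its underlying continuous class. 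By Definition \ref{dfn:weak homotopy equivalence} it then suffices to show this comparison is a bijection on $\pi_{0}$ and an isomorphism on each $\pi_{n}$, $n\geq 1$, for every basepoint; and since the homotopy groups are independent of the choice of basepoint within a component, I may take $x_{0}$ to be a $0$-cell, so that $(X,x_{0})$ is a relative gathered CW complex.

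For surjectivity I would represent a class by a pointed continuous map out of $S^{n}=I^{n}/\partial I^{n}$. The pair $(S^{n},*)$ is a \emph{gathered} relative CW complex, since its single top cell is attached by the constant map $\partial I^{n}\to *$, which is tame. Applying the Whitney approximation property (Theorem \ref{thm:Whitney approximation property}) to the relative gathered CW complexes $(S^{n},*)$ and $(X,x_{0})$ yields a smooth map $g\colon(S^{n},*)\to(X,x_{0})$ with $Tg$ homotopic to the given map relative to the basepoint, so the class lies in the image of $\eta_{*}$. The case $n=0$ is the same argument run with the \emph{gathered} pair $(I,\partial I)$ (Proposition \ref{prp:example of CW complex}), smoothing a continuous path relative to its endpoints.

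For injectivity I would take two smooth maps $g_{0},g_{1}\colon(S^{n},*)\to(X,x_{0})$ whose images $Tg_{0},Tg_{1}$ are continuously homotopic relative to the basepoint, and aim to produce a smooth homotopy $g_{0}\simeq g_{1}$. Such a continuous homotopy is precisely a continuous filling of the cylinder $S^{n}\times I$ extending the smooth data $g_{0},g_{1}$ on the ends $S^{n}\times\partial I$ together with the constant map on $\{*\}\times I$; writing $A=S^{n}\times\partial I\cup\{*\}\times I$ for this subcomplex, I want to replace the continuous filling by a smooth one relative to $A$. Because $S^{n}$ is compact, $g_{0}(S^{n})\cup g_{1}(S^{n})$ lies in a finite subcomplex $B$ of $X$, and $(X,B)$ is then a relative gathered CW complex; so the only remaining issue is whether Theorem \ref{thm:Whitney approximation property} can be applied with domain $(S^{n}\times I,A)$.

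The hard part will be exactly this, because the cylinder $(S^{n}\times I,A)$ is \emph{not} a gathered relative CW complex. As recorded in the Remark following Proposition \ref{prp:product of CW}, the attaching maps of a product of gathered complexes incorporate characteristic maps and hence fail to be tame, so Theorem \ref{thm:Whitney approximation property} cannot be quoted verbatim for the homotopy. My plan to circumvent this is to smooth the filling one product cell $e\times I$ at a time: on the boundary of such a cell the map is already tame, so I can extend it smoothly into $X$ by the approximate-retraction extension of tame maps (Lemma \ref{lmm:approximate retraction exists} and Proposition \ref{extend of J}), and then glue the local smooth fillings using a partition of unity subordinate to a $D$-open cover, which exists by the partition-of-unity theorem \cite[Theorem 9]{HS1}. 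Making these cellwise smoothings agree on overlaps and with the fixed data on $A$—in effect reproving the relevant case of Whitney approximation by hand for the non-gathered cylinder—is the technical core of the injectivity argument.
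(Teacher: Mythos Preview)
The paper supplies no proof of this proposition; it is simply quoted from \cite[Proposition~6.8]{HS} and left at that, so there is no in-paper argument to compare your proposal against.

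On its own merits, your surjectivity step and the $\pi_{0}$ case are sound: $(S^{n},\ast)$ is gathered because its only attaching map is constant, $(I,\partial I)$ is gathered by Proposition~\ref{prp:example of CW complex}, and with $x_{0}$ chosen to be a $0$-cell the target pair $(X,x_{0})$ is relative gathered, so Theorem~\ref{thm:Whitney approximation property} applies exactly as you say.

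The injectivity step for $n\geq 1$, however, has a genuine gap, and your workaround does not close it. You correctly diagnose that $(S^{n}\times I,\;S^{n}\times\partial I\cup\{\ast\}\times I)$ is not gathered, so Theorem~\ref{thm:Whitney approximation property} is unavailable. But the tools you then invoke do not do what you claim: Lemma~\ref{lmm:approximate retraction exists} and Proposition~\ref{extend of J} extend a tame map from the \emph{horn} $J^{n}$ to $I^{n+1}$, not from the full boundary $\partial I^{n+1}$. A tame map $\partial I^{n+1}\to X$ certainly need not extend smoothly over $I^{n+1}$---that would force every $\pi_{n}(X)$ to vanish---so ``the boundary data is tame, hence extends'' is not a valid step. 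Moreover, with the minimal CW structure on $S^{n}$ there is exactly one nontrivial product cell $e^{n}\times I$, so there is nothing for a partition of unity to glue; the entire difficulty is concentrated in smoothing that single $(n{+}1)$-cell relative to its (already smooth, even tame) boundary. What is actually needed is a relative smoothing statement for $(I^{n+1},\partial I^{n+1})$---equivalently, that a tame map $\partial I^{n+1}\to X$ which bounds continuously in $TX$ also bounds smoothly in $X$---and nothing you cite furnishes this. You have identified the hard point honestly, but the outline you give for resolving it is not yet an argument.
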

We introduce one result of Theorem A.1 in {\cite[Appendix A]{Iwase}}.
\begin{thm}[{\cite[Theorem 6]{HS1}}]
Let $X$ be a topological CW complex.
Then there exists a gathered CW complex $Y$ such that $X$ is continuously homotopy equivalent to $TY$.
\end{thm}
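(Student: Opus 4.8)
The plan is to construct the gathered CW complex $Y$ by induction on skeleta so that it carries one cube-cell for each cell of $X$, while simultaneously producing a continuous map $f\colon TY\to X$ whose restriction $f_{n}\colon TY^{n}\to X^{n}$ to each finite skeleton is a homotopy equivalence. Granting this, $TY$ is a topological CW complex by Proposition~\ref{topological CW complexes} and Proposition~\ref{the weak topology induced smooth CW complexes}, and since $T$ preserves colimits we have $TY=\colim_{n}TY^{n}$ and $f=\colim_{n}f_{n}$. Each $f_{n}$ induces isomorphisms on all homotopy groups, and homotopy groups of CW complexes are detected on finite skeleta, so $f$ is a weak homotopy equivalence; the classical Whitehead theorem in $\Top$ then upgrades $f$ to a continuous homotopy equivalence between $TY$ and $X$, which is the assertion.

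For the induction I would set $Y^{0}$ to be the discrete diffeological space on the $0$-cells of $X$, so $f_{0}\colon TY^{0}\to X^{0}$ is a homeomorphism. Assuming a gathered $Y^{n-1}$ and a homotopy equivalence $f_{n-1}\colon TY^{n-1}\to X^{n-1}$, fix a homotopy inverse $g_{n-1}\colon X^{n-1}\to TY^{n-1}$. For each topological attaching map $\phi^{X}_{\lambda}\colon \partial I^{n}\to X^{n-1}$ of $X$ (using $\partial I^{n}\cong S^{n-1}$), the composite $g_{n-1}\circ\phi^{X}_{\lambda}$ is a continuous lift with $f_{n-1}\circ(g_{n-1}\circ\phi^{X}_{\lambda})\simeq\phi^{X}_{\lambda}$. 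It remains to replace this continuous map by a \emph{tame} smooth attaching map in the same free homotopy class, so that attaching cubes along it yields a gathered complex whose $D$-topology realizes the desired topological cell.

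This taming step is the heart of the argument. For $n=1$ it is immediate, since a map $\partial I^{1}\to Y^{0}$ is a pair of vertices and hence tame. For $n\geq 2$ I would use that, by \cite[Proposition 6.8]{HS}, the unit $Y^{n-1}\to DTY^{n-1}$ of the gathered complex $Y^{n-1}$ is a weak homotopy equivalence, so the adjunction $T\dashv D$ yields an isomorphism $\pi_{n-1}(Y^{n-1})\cong\pi_{n-1}(TY^{n-1})$ between smooth and continuous homotopy groups. Transporting the class of $g_{n-1}\circ\phi^{X}_{\lambda}$ across this isomorphism and choosing a tame representative $a_{\lambda}\colon(I^{n-1},\partial I^{n-1})\to(Y^{n-1},y_{0})$ via \cite[Lemma 3.11]{HS}, I then extend $a_{\lambda}$ over $\partial I^{n}$: writing $\partial I^{n}=\bigl(I^{n-1}\times\{1\}\bigr)\cup J^{n-1}$, set $\psi_{\lambda}$ equal to $a_{\lambda}$ on the top face and the constant $y_{0}$ on $J^{n-1}$. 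Since $a_{\lambda}$ is tame it is already constant near $\partial I^{n-1}$, so the two prescriptions agree and $\psi_{\lambda}\colon\partial I^{n}\to Y^{n-1}$ is a well-defined smooth map, tame in the sense of Definition~\ref{dfn:definition of tame maps}; moreover $T\psi_{\lambda}$ collapses the contractible $J^{n-1}$ and thus represents the class of $g_{n-1}\circ\phi^{X}_{\lambda}$, whence $f_{n-1}\circ T\psi_{\lambda}\simeq\phi^{X}_{\lambda}$. I expect the main obstacle to be arranging $\psi_{\lambda}$ to be simultaneously smooth, tame, and of the prescribed homotopy type, together with checking that the homotopy-group correspondence is $\pi_{1}$-equivariant so that free homotopy classes match.

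Attaching the cubes $I^{n}$ to $Y^{n-1}$ along $\{\psi_{\lambda}\}$ as in Definition~\ref{dfn:definition of smooth CW complexes} produces a gathered skeleton $Y^{n}$, and since $T$ preserves the defining pushout (Proposition~\ref{topological CW complexes}) the space $TY^{n}$ is obtained from $TY^{n-1}$ by attaching topological cells along $T\psi_{\lambda}$. As $\partial I^{n}\hookrightarrow I^{n}$ is a cofibration and the comparison squares commute up to the homotopies $f_{n-1}\circ T\psi_{\lambda}\simeq\phi^{X}_{\lambda}$, the gluing lemma for homotopy equivalences extends $f_{n-1}$ to a homotopy equivalence $f_{n}\colon TY^{n}\to X^{n}$, completing the inductive step. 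Passing to the colimit and invoking the Whitehead theorem as above then finishes the proof.
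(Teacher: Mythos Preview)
The paper does not contain a proof of this theorem; it is merely quoted from \cite[Theorem~6]{HS1} with no argument given, so there is no ``paper's own proof'' to compare your attempt against. (The sentence preceding the statement, ``We introduce one result of Theorem~A.1 in \cite[Appendix~A]{Iwase}'', hints that the proof in \cite{HS1} goes through Iwase--Izumida's construction, but none of that is reproduced here.)

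That said, your inductive plan is sound and is essentially the natural approach. A few points deserve attention before it becomes a proof. First, a notational slip: since $J^{n-1}=\partial I^{n-1}\times I\cup I^{n-1}\times\{1\}$ already contains the top face, the decomposition you want is $\partial I^{n}=(I^{n-1}\times\{0\})\cup J^{n-1}$, not $(I^{n-1}\times\{1\})\cup J^{n-1}$. Second, the free-versus-based issue you flag is real but easy: the unit $Y^{n-1}\to DTY^{n-1}$ is a \emph{map of spaces} inducing the isomorphism on $\pi_{n-1}$, hence is automatically $\pi_{1}$-equivariant and a bijection on $\pi_{0}$, so free homotopy classes of maps $\partial I^{n}\to Y^{n-1}$ and $\partial I^{n}\to TY^{n-1}$ correspond; you only need to pick the basepoint $y_{0}$ in the path component hit by $g_{n-1}\circ\phi^{X}_{\lambda}$. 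Third, the smoothness of your glued map $\psi_{\lambda}\colon\partial I^{n}\to Y^{n-1}$ is not automatic, because $\partial I^{n}$ carries the \emph{subspace} diffeology from $\R^{n}$, not the colimit diffeology of its faces; a plot can wander between faces. Tameness of $a_{\lambda}$ saves you here---the map is constant on a neighbourhood of every face of dimension $\leq n-2$, so it extends to a smooth map on an open neighbourhood of $\partial I^{n}$ in $\R^{n}$---but this step should be made explicit.
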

\begin{rmk}
In \cite{Kihara1} and \cite{Kihara},
Kihara gives the notion of smooth CW complexes with characteristic maps of simplexes in the process of introducing a model structure on the category of diffeological spaces.
In {\cite[Appendix A]{Iwase}},
Iwase and Izumida introduce smooth CW complexes with characteristic maps of disks.
Since simplexes, disks and cubes are not diffeomorphic,
the characteristic maps need to be properly defined for the purpose of discussion.
\end{rmk}
%
%
%
%
%
%
%
%
\section{The homotopy extension property}
In this section we prove that relative gathered CW complexes have the homotopy extension property.
Moreover we develop this theory by giving the notion of $N$-connectedness for diffeological spaces.
First,
we will prepare several lemmas.
Because we must be careful to discuss this theory since all characteristic maps of gathered CW complexes are not tame.
\begin{lmm}\label{lmm:smooth}
Let $h_{0} \colon X \to Y$ be a smooth map between diffeological spaces.
Let $A$ be a subspace of $X$ and let $H \colon A \times I \to Y$ be an $\epsilon$-tame homotopy satisfying $H(a,0)=h_{0}(a)$ for all $a \in A$.
Then a map
$
\tilde{H} \colon (A \times I) \cup (X \times \{0\}) \to Y
$
given by the formula
\[
\tilde{H}(x,t)=
\left\{
\begin{array}{lll}
H(x,t),
&
(x,t) \in A \times I
\\
h_{0}(x),
&
(x,t) \in X \times \{0 \}
\end{array}
\right.
\]
is smooth,
where $ (A \times I) \cup (X \times \{0\}) $ is subspace of $X \times I$.
\end{lmm}
Hereafter,
all homotopies are defined by tame,
so we will use this property without referring to the lemma above.
\begin{proof}
Let $P \colon U \to A \times I \cup (X \times \{0\}$ be a plot given by $P(r)=(P_{1}(r),P_{2}(r))$,
that is,
$P=(P_{1} \times P_{2}) \circ \Delta$ holds,
where $\Delta \colon U \to U \times U$ is the diagonal.
Then we have
\[
\tilde{H} \circ P(r)=
\left\{
\begin{array}{lll}
H(P_{1}(r),P_{2}(r)),
&
P_{2}(r) \in (0,1]
\\
h_{0}(P_{1}(r)),
&
P_{2}(r) \in [0, \epsilon),
\end{array}
\right.
\]
for all $r \in U$.
Let $V_{1}$ and $V_{2}$ be open sets $P_{2}^{-1}((0,1])$ and $P_{2}^{-1}([0,\epsilon))$ of $U$,
respectively.
Then the restrictions $\tilde{H} \circ P|V_{1} = H \circ (P_{1} \times P_{2}) \circ \Delta|V_{1}$ and 
$\tilde{H} \circ P|V_{2}=h_{0} \circ P_{1}|V_{2}$ are plots of $Y$.
Thus $\tilde{H}$ is smooth.
\end{proof}
\begin{lmm}\label{prp:smooth retraction}
Let $f \colon L^{n-1} \to X$ be a smooth map such that the restriction of $f$ to $\partial I^{n-1}\times I$ is $\epsilon$-tame.
Then $f$ can be extended to a tame homotopy $g\colon I^{n-1} \times I \to X$.
\end{lmm}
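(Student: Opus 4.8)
The plan is to read $f$ as the union of two pieces and then extend the resulting homotopy over the interior of the cube. Writing $h_{0}=f|I^{n-1}\times\{0\}$ and $H=f|\partial I^{n-1}\times I$, the hypothesis says that $h_{0}$ is smooth, that $H$ is an $\epsilon$-tame homotopy, and that $H(a,0)=h_{0}(a)$ for $a\in\partial I^{n-1}$; thus, by Lemma \ref{lmm:smooth}, the datum $f$ on $L^{n-1}$ is exactly such a compatible pair, and what must be produced is a smooth homotopy $g$ on $I^{n-1}\times I$ with $g|L^{n-1}=f$ which is tame in the homotopy coordinate $t$. The essential difficulty is that, unlike in the topological case, $L^{n-1}$ is not a smooth retract of $I^{n}$; moreover we are \emph{not} free to perturb $f$ on the bottom $I^{n-1}\times\{0\}$, where $h_{0}$ carries no tameness, so any retraction we compose with must restrict to the identity there on the nose. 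This is why the present statement is not a formal consequence of Proposition \ref{extend of J}, whose hypothesis is full $\epsilon$-tameness.

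I would therefore construct a smooth map $r\colon I^{n-1}\times I\to L^{n-1}$ and set $g=f\circ r$, building $r$ with three features. First, for $t\le\sigma$ (with $\sigma<\epsilon$) I put $r(x,t)=(x,0)$; this vertical collar keeps $r$ equal to the identity on the whole of $I^{n-1}\times\{0\}$, so that $g=h_{0}$ there, makes $g$ constant in $t$ near $t=0$, and avoids any fold over the corners $\partial I^{n-1}\times\{0\}$ where $h_{0}$ is uncontrolled. Second, on the sides $\partial I^{n-1}\times I$ I set $r(a,t)=(a,T_{\sigma,\epsilon}(t))$, using the smash function of Lemma \ref{lmm:modified smash function}; since $T_{\sigma,\epsilon}$ is the identity on $[\epsilon,1-\epsilon]$ and $H$ is $\epsilon$-tame, this reparametrisation leaves every $H$-value unchanged, so $g|\partial I^{n-1}\times I=H$, and it is compatible with the collar because $T_{\sigma,\epsilon}(t)=0$ for $t\le\sigma$. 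Third, for $t$ near $1$ I fold the interior of $I^{n-1}$ onto the sides by an approximate retraction of the type furnished by Lemma \ref{lmm:approximate retraction exists}, arranged to be constant in $t$ near $t=1$; because $H$ is constant in $t$ on $\partial I^{n-1}\times[1-\epsilon,1]$, the non-smoothness of this fold is absorbed after composing with $f$.

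With such an $r$ in hand the verification is routine. Smoothness of $g=f\circ r$ is clear away from the seams, and at the bottom seam $t=\sigma$ it follows from the collar definition together with the $\epsilon$-tameness of $H$, exactly as in the proof of Lemma \ref{lmm:smooth}, since there $g$ coincides with the constant-in-$t$ map $h_{0}$. That $g$ extends $f$ is precisely the content of the three features above, and tameness of $g$ as a homotopy is built in, as $g(x,t)=h_{0}(x)$ for $t\le\sigma$ and $g(x,t)=g(x,1)$ for $t$ near $1$.

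I expect the main obstacle to be the third step: producing the fold $r$ over the region $t$ near $1$ that is smooth, lands in $L^{n-1}$, matches the collar and the side prescription along their common boundaries, and deposits the interior of $I^{n-1}$ into the tame zone $\partial I^{n-1}\times[1-\epsilon,1]$ of $H$. This is exactly where the analogue of the approximate-retraction construction of Lemma \ref{lmm:approximate retraction exists} must be carried out by hand, the $\epsilon$-tameness of $H$ on the sides being what rescues smoothness of $g=f\circ r$ across the fold.
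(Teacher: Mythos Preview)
Your plan has a genuine gap beyond the acknowledged difficulty in step~3. First, steps 1--3 together specify $r$ only on the collar $\{t\le\sigma\}$, on the sides $\partial I^{n-1}\times I$, and on a slab near $t=1$; the bulk region (interior $x$, intermediate $t$) is left undefined. Second, and more seriously, the ``fold of the interior of $I^{n-1}$ onto the sides'' that you invoke in step~3 does not exist: a map $I^{n-1}\to\partial I^{n-1}$ restricting to the identity (or to an approximate identity) on $\partial I^{n-1}$ is obstructed for $n\ge3$ by the usual no-retraction argument, and Lemma~\ref{lmm:approximate retraction exists} furnishes a retraction $I^{n}\to J^{n-1}$, which is a different target altogether. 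The tameness of $H$ in the last coordinate cannot rescue this, because the non-smoothness you would need to absorb lies in the $\partial I^{n-1}$-direction, where $H(\,\cdot\,,1)$ is not constant.

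The paper's argument is organised differently and supplies exactly the missing idea. It works in two stages rather than via a single $r$. On a thin slab $I^{n-1}\times[0,\epsilon/2]$ it runs the straight-line homotopy \emph{in the domain} $I^{n-1}$ from the identity to $T^{n-1}_{\epsilon/2,\epsilon}$, composed with $h_{0}=f|I^{n-1}\times\{0\}$; this replaces the bottom datum by the $\epsilon/2$-tame map $h_{0}\circ T^{n-1}_{\epsilon/2,\epsilon}$, and the $\epsilon$-tameness of $H$ makes the slab agree with $f$ along the sides throughout. At height $\epsilon/2$ the data on $\partial I^{n-1}\times[\epsilon/2,1]\cup I^{n-1}\times\{\epsilon/2\}$ is now a \emph{fully} tame map on a copy of $L^{n-1}$, so Proposition~\ref{extend of J} applies directly to fill in $I^{n-1}\times[\epsilon/2,1]$. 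The key move you are missing is this first stage: one must spend a short $t$-interval \emph{tamefying $h_{0}$}---by reparametrising $I^{n-1}$, not $I$---before the approximate retraction of Lemma~\ref{lmm:approximate retraction exists} becomes applicable. Your constant-in-$t$ collar does no such tamefication, which is why neither the fold of step~3 nor the interpolation between it and the collar can be written down.
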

\begin{proof}
Let us define a tame homotopy $F \colon I^{n-1} \times [0,{\epsilon}/{2}] \to X$ by the formula
\[
F(t,s)=
f\left(\tilde{T}(s)t+(1-\tilde{T}(s))T^{n-1}_{{\epsilon}/{2},\epsilon}(t)\right),
\]
where $\tilde{T} \colon [0, {\epsilon}/{2}] \to I$ is a tame map given by $\tilde{T}(s)=T_{\epsilon,{1}/{2}}({2s}/{\epsilon} )$.
Then it satisfies $F(t,s)=f(t)$ for $(t,s) \in \partial I^{n-1} \times [0,{\epsilon}/{2}]$.
Let
\[
F^{\prime} \colon \partial I^{n-1} \times [{\epsilon}/{2},1] \cup I^{n-1} \times \{{\epsilon}/{2}\} \to X
\]
be a smooth map given by the formula
\[
F^{\prime}(t,s)=
\left\{
\begin{array}{lll}
f(t,s), & (t,s) \in \partial I^{n-1} \times [{\epsilon}/{2},1] \\
f \circ T^{n-1}_{{\epsilon}/{2},\epsilon}(t), & (t,s) \in I^{n-1} \times \{{\epsilon}/{2}\}.
\end{array}
\right.
\]
Then there exists a tame map $\tilde{F} \colon I^{n-1} \times [{\epsilon}/{2},1] \to X$ extending $F^{\prime}$ 
by Proposition \ref{extend of J}.
Thus we have a tame homotopy $g \colon I^{n-1} \times I \to X$ given by the formula
\[
g(t,s)=
\left\{
\begin{array}{ll}
F(t,s), & 0 \leq s \leq {\epsilon}/{2} \\
\tilde{F}(t,s),& {\epsilon}/{2} \leq s \leq 1,
\end{array}
\right.
\]
which extends $f$.
\end{proof}
Then we have the following homotopy extension property.
\begin{lmm}[Homotopy extension property]\label{lmm:HEP}
Let $(X,A)$ be a relative gathered CW complex.
Suppose we are given a smooth map $f \colon X \to Y$ and a tame homotopy $h \colon A \times I \to Y$ satisfying $h(a,0)=f(a)$.
Then there exists a tame homotopy $H \colon X \times I \to Y$ satisfying $H|A \times I=h$ and $H(x,0)=f(x)$.
\end{lmm}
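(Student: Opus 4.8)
The plan is to construct $H$ by induction on the skeleta of $(X,A)$, raising the dimension one cell-layer at a time and using the cell-wise extension supplied by Lemma \ref{prp:smooth retraction}. First I would fix the initial data. By Lemma \ref{lmm:smooth}, the prescription $H=h$ on $A \times I = X^{-1} \times I$ together with $H=f$ on $X \times \{0\}$ defines a smooth map on the subspace $(A \times I) \cup (X \times \{0\})$ of $X \times I$; it is $\epsilon$-tame in the homotopy direction because $h$ is. This is the base of the induction: $H$ is already defined and tame on $(X^{-1} \times I) \cup (X \times \{0\})$.

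For the inductive step I would assume $H$ has been built as a tame homotopy on $(X^{n-1} \times I) \cup (X \times \{0\})$, restricting to $h$ on $A \times I$ and to $f$ on $X \times \{0\}$, and extend it over $X^{n} \times I$. The device that makes this possible is that crossing the defining pushout square of $X^{n}$ with $I$ again yields a pushout (by Corollary \ref{final diffeology and identity} and Lemma \ref{subduction of final diffeology}):
\[
\xymatrix{
\coprod_{\lambda \in \Lambda_{n}} \partial I^{n} \times I
\ar[r]
\ar[d]
&
X^{n-1} \times I
\ar[d]
\\
\coprod_{\lambda \in \Lambda_{n}} I^{n} \times I
\ar[r]
&
X^{n} \times I.
}
\]
Hence, to define a smooth map on $X^{n} \times I$ extending $H|X^{n-1}\times I$, it suffices to produce for each $\lambda \in \Lambda_{n}$ a smooth map $\tilde{g}_{\lambda} \colon I^{n} \times I \to Y$ agreeing with $H \circ (\phi_{\lambda} \times 1_{I})$ on $\partial I^{n} \times I$ and with $f \circ \Phi_{\lambda}$ on $I^{n} \times \{0\}$.

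For each cell I would pull $H$ back along $(\Phi_{\lambda} \times 1_{I})$ restricted to $L^{n}=(\partial I^{n} \times I) \cup (I^{n} \times \{0\})$, obtaining a smooth map $g_{\lambda} \colon L^{n} \to Y$; this is legitimate since $\Phi_{\lambda}(\partial I^{n}) = \phi_{\lambda}(\partial I^{n}) \subset X^{n-1}$, where $H$ is already defined. Its restriction to $\partial I^{n} \times I$ equals $H \circ (\phi_{\lambda} \times 1_{I})$, which is tame: in the $\partial I^{n}$-directions because the attaching map $\phi_{\lambda}$ is tame (this is exactly where the \emph{gathered} hypothesis enters), and in the homotopy direction because $H$ is a tame homotopy. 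Thus the hypothesis of Lemma \ref{prp:smooth retraction}, applied with the dimension shifted by one, is satisfied, so $g_{\lambda}$ extends to a tame homotopy $\tilde{g}_{\lambda} \colon I^{n} \times I \to Y$. Gluing the $\tilde{g}_{\lambda}$ with $H|X^{n-1}\times I$ across the pushout above produces $H$ on $X^{n}\times I$, the smoothness being guaranteed by the final diffeology carried by the pushout. Passing to $X = \colim_{n} X^{n}$, a plot of $X \times I$ factors locally through some $X^{n}\times I$ by Proposition \ref{plots of smooth CW complexes}, so the compatible maps on the $X^{n}\times I$ assemble into a smooth $H \colon X \times I \to Y$ with $H|A\times I = h$ and $H(x,0)=f(x)$.

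I expect the main obstacle to be the tameness bookkeeping rather than the gluing. Lemma \ref{prp:smooth retraction} can only be invoked once one knows the boundary data $H \circ (\phi_{\lambda} \times 1_{I})$ is tame, which is precisely why gatheredness is assumed and why $H$ must be carried along as a \emph{tame}, not merely smooth, homotopy at every stage. The genuinely delicate point is to guarantee that the limiting homotopy is again $\epsilon$-tame for a \emph{single} parameter $\epsilon$: each application of Lemma \ref{prp:smooth retraction} could a priori shrink the tameness constant, and over infinitely many skeleta a shrinking sequence would degenerate. I would therefore arrange a common $\epsilon$ across all skeleta, fixing the collar width used in the cell extensions uniformly, so that the colimit homotopy $H$ is genuinely tame.
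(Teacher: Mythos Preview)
Your proposal is correct and follows essentially the same route as the paper: induct on skeleta, apply Lemma \ref{prp:smooth retraction} cell by cell to extend from $L^{n}$ to $I^{n}\times I$, glue via the subduction $(i_{n}\times 1_{I})\bigcup_{\lambda}(\Phi_{\lambda}\times 1_{I})$ supplied by Lemma \ref{subduction of final diffeology}, and then pass to the colimit. The paper does not explicitly address the uniform-$\epsilon$ concern you raise in your last paragraph; it simply records each $H^{n}$ as a tame homotopy and assembles $H$ from them.
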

\begin{proof}
Inductively,
suppose there is a tame homotopy $H^{n-1} \colon X^{n-1} \times I \to Y$ satisfying 
$H^{n-1}(x,0)=f(x)$ and $H^{n-1}|X^{n-2} \times I=H^{n-2}$.
For each $\lambda \in \Lambda_{n}$,
let $\phi_{\lambda} \colon \partial I^{n} \to X^{n-1}$ and $\Phi_{\lambda} \colon I^{n} \to X^{n}$ be attaching and characteristic maps,
respectively.
We have a smooth map $H_{\lambda} \colon \partial I^{n} \times I \cup I^{n} \times \{0\} \to Y$ given by the formula
\[
\tilde{H}_{\lambda}(t,s)=
\left\{
\begin{array}{llll}
H^{n-1}(\phi_{\lambda}(t),s), & (t,s) \in \partial I^{n} 
\\
f(\Phi_{\lambda}(t)), & (t,s) \in I^{n-1} \times \{0\}.
\end{array}
\right.
\]
Then there exists a tame homotopy $\tilde{H}_{\lambda} \colon I^{n} \times I \to Y$ extending $H_{\lambda}$ 
by Lemma \ref{prp:smooth retraction}.
Hence we have a tame homotopy $H^{n} \colon X^{n} \times I \to Y$ extending $H^{n-1}$ such that the following diagram is commutative
\[
\xymatrix@C=60pt{
(X^{n-1} \times I) \coprod_{\lambda \in \Lambda_{n}} (I^{n} \times I)
\ar[r]^(0.72){H^{n-1} \bigcup_{\lambda}\tilde{H}_{\lambda}}
\ar[d]_{(i_{n} \times 1_{I}) \bigcup_{\lambda}(\Phi_{\lambda} \times 1_{I})}
&
Y
\\
X^{n} \times I,
\ar@{.>}[ru]_{\exists H^{n}}
}
\]
since $(i_{n} \times 1_{I}) \bigcup_{\lambda}(\Phi_{\lambda} \times 1_{I})$ is a subduction by Lemma \ref{subduction of final diffeology}.
Similarly,
there exists a tame homotopy $H \colon X \times I \to Y$ making the diagram commutative
\[
\xymatrix@C=50pt{
\coprod_{n \geq -1} (X^{n} \times I)
\ar[r]^(0.68){\bigcup_{n} H^{n}}
\ar[d]_{\bigcup_{n}( j_{n} \times 1_{I})}
&
Y
\\
X \times I,
\ar@{.>}[ru]_{\exists H}
}
\]
since $ \bigcup_{n}(j_{n} \times 1_{I}) \colon \coprod_{n \geq -1}(X^{n} \times I) \to X \times I$ is a subducition by Lemma \ref{subduction of final diffeology},
where $j_{n} \colon X^{n} \to X$ is the inclusion for each $n \geq -1$.
That is,
$H$ is given by $H|X^{n} \times I=H^{n}$.
Then $H(x,0)=f(x)$ holds.
\end{proof}
\begin{thm}\label{HEP2}
Let $(X,A)$ be a gathered relative CW complex and $(Y,B)$ be a pair of diffeological spaces.
Suppose there are a smooth map between pairs $f \colon (X,A) \to (Y,B)$ and a tame homotopy of pairs $F \colon X \times I \to Y$ satisfying
$F(x,0)=f(x)$ and $F(x,1) \in B$ for all $x \in X$.
Then there is a tame homotopy $F^{\prime} \colon X \times I \to Y$ relative to $A$ satisfying 
$F^{\prime}(x,0)=f(x)$ and $F^{\prime}(x,1) \in B$ for all $x \in X$.
\end{thm}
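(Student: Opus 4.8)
The plan is to build a ``homotopy of homotopies'' $G \colon X \times I \times I \to Y$, with coordinates $(x,t,s)$, in which the parameter $s$ deforms the given homotopy $F = G(-,-,0)$ into the desired relative one, and then to set $F'(x,t) = G(x,t,1)$. I would prescribe $G$ on the subspace $(X \times L^{1}) \cup (A \times I \times I)$, where $L^{1} = \partial I \times I \cup I \times \{0\}$ sits in the square with coordinates $(t,s)$, by four pieces: on the bottom face $s=0$ put $G(x,t,0) = F(x,t)$; on the left face $t=0$ put $G(x,0,s) = f(x)$; on $A \times I \times I$ put $G(a,t,s) = F(a,t(1-s))$; and on the right face $t=1$ put $G(x,1,s) = K(x,s)$, where $K$ is produced as follows. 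Since $F$ is a tame homotopy of pairs, the reversed homotopy $\bar{h}(a,s) = F(a,1-s)$ is a tame homotopy $A \times I \to B$ with $\bar{h}(a,0) = F(a,1)$. Regarding $f_{1}(x) := F(x,1)$ as a smooth map $f_{1} \colon X \to B$ and applying the homotopy extension property of Lemma \ref{lmm:HEP} \emph{with target $B$}, I obtain a tame homotopy $K \colon X \times I \to B$ with $K(x,0) = f_{1}(x)$ and $K|_{A \times I} = \bar{h}$. A routine check shows the four pieces agree on their overlaps (so they assemble consistently, the gluings being of the type handled by Lemma \ref{lmm:smooth}); the essential point is that $K$ takes values in $B$, so that the top-right corner value $G(x,1,1) = K(x,1)$ lands in $B$.

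Next I would extend $G$ over all of $X \times I \times I$ by the same cell-by-cell induction used for Lemma \ref{lmm:HEP}. Over an $n$-cell with characteristic map $\Phi_{\lambda} \colon I^{n} \to X^{n}$, the prescribed data live on $(I^{n} \times L^{1}) \cup (\partial I^{n} \times I \times I)$; grouping $(x,t)$ into an $I^{n+1}$ and keeping $s$ as the last coordinate identifies $I^{n} \times I \times I$ with $I^{n+2}$, and under this identification the prescribed subspace is exactly $L^{n+1} = \partial I^{n+1} \times I \cup I^{n+1} \times \{0\}$. Hence Lemma \ref{prp:smooth retraction}, applied in dimension $n+2$, extends the given smooth map on $L^{n+1}$ to a tame homotopy on $I^{n+1} \times I$, thereby filling in the missing top face $s=1$ over the cell. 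Gluing these cellwise extensions along the subductions of Lemma \ref{subduction of final diffeology} and passing to the colimit (as in the proof of Lemma \ref{lmm:HEP}) yields a tame $G \colon X \times I \times I \to Y$. Setting $F'(x,t) = G(x,t,1)$ then gives $F'(x,0) = f(x)$, $F'(x,1) = K(x,1) \in B$, and $F'(a,t) = F(a,0) = f(a)$, so $F'$ is a tame homotopy relative to $A$ with the required endpoint behaviour.

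The main obstacle I expect is the tameness bookkeeping that lets Lemma \ref{prp:smooth retraction} apply at every cell, since its hypothesis demands that the restriction of the boundary data to $\partial I^{n+1} \times I$ be $\epsilon$-tame. On the faces coming from $\partial I^{n}$ this is precisely where gatheredness is used: the attaching maps $\phi_{\lambda}$ are tame and the inductively constructed $G$ on the previous skeleton is tame, so the composite data are tame; on the faces $t \in \{0,1\}$ it follows from $f \circ \Phi_{\lambda}$ being constant in $s$ together with the tameness of $F$ and of $K$. I would stress that one cannot shortcut the argument by applying Lemma \ref{lmm:HEP} directly to the pair $(X \times I,\, (X \times \partial I) \cup (A \times I))$, because a product of gathered CW complexes need not be gathered, so the hypotheses of the homotopy extension property fail for that pair. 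This is exactly why the construction must be carried out by the explicit cellwise extension above, which invokes only the gatheredness of $X$ itself.
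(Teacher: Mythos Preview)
Your strategy --- extend the reversed homotopy on $A$ to $K \colon X \times I \to B$ via Lemma~\ref{lmm:HEP}, prescribe a two--parameter map $G$ on $(X \times L^{1}) \cup (A \times I^{2})$, fill in cell by cell, and set $F' = G(\cdot,\cdot,1)$ --- is close in spirit to the paper's argument, which also begins with the same $K$ (called $H$ there) and also builds a map $\tilde{H} \colon X \times I \times I \to Y$. But your cellwise filling has a real gap.

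When you identify the prescribed subspace over an $n$--cell with $L^{n+1} \subset I^{n+2}$ (coordinates $(u,t,s)$, last coordinate $s$) and invoke Lemma~\ref{prp:smooth retraction}, that lemma requires the restriction to $\partial I^{n+1} \times I$ to be $\epsilon$--tame in \emph{all} $n+2$ coordinates. Now $\partial I^{n+1} = (\partial I^{n} \times I_{t}) \cup (I^{n} \times \partial I_{t})$, and on the faces $I^{n} \times \{t=0\} \times I_{s}$ and $I^{n} \times \{t=1\} \times I_{s}$ your data are $f(\Phi_{\lambda}(u))$ and $K(\Phi_{\lambda}(u),s)$, which involve the \emph{characteristic} map $\Phi_{\lambda}$ rather than the attaching map $\phi_{\lambda}$. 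Gatheredness only says $\phi_{\lambda}$ is tame; $\Phi_{\lambda}$ need not be, so these pieces fail to be tame in the $u_{j}$--coordinates and Lemma~\ref{prp:smooth retraction} does not apply. Your tameness discussion handles the $s$-- and $t$--directions but overlooks the $u$--directions. (A secondary point: your formula $G(a,t,s) = F(a,t(1-s))$ is not tame in $s$ near $0$, so even the base case of your induction is not tame; this one is repairable by inserting a $T_{\sigma,\tau}$, but the $\Phi_{\lambda}$ obstruction persists.)

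The paper sidesteps this entirely. It concatenates $F$ with $H$ to form $H'$ and precomposes with $\mu(t) = T_{\sigma,\tau}(2t) - T_{\sigma,\tau}(2t-1)$, so that $t \mapsto H'(x,\mu(t))$ is constant at $t \in \{0,1\}$; the nullhomotopy on $A$ is then the closed formula $G'(a,t,s) = H'\bigl(a,(1-T_{\sigma,\tau}(s))\mu(t)\bigr)$. This reduces the extension to an ordinary application of Lemma~\ref{lmm:HEP} for the pair $(X,A)$ itself (absorb $t$ into the target: extend a tame homotopy $A \times I_{s} \to Y^{I_{t}}$ to $X \times I_{s} \to Y^{I_{t}}$). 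In that induction only the attaching maps $\phi_{\lambda}$ appear on the relevant boundary, never $\Phi_{\lambda}$, so gatheredness suffices. You correctly note that one cannot invoke the HEP for $(X \times I,\,(X \times \partial I)\cup(A \times I))$ directly, but your workaround reintroduces the same obstruction one cell at a time.
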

\begin{proof}
Let $G \colon A \times I \to B$ be a tame homotopy given by $G(a,t)=F(a,1-t)$.
Then there is a tame homotopy of pairs $H \colon X \times I \to B$ satisfyng $H|A \times I=G$ and $H(x,0) \in B$ by Lemma \ref{lmm:HEP}.
Let us define $H^{\prime} \colon X \times I \to Y$ by the formula
\[
H^{\prime}(x,t)=\left\{
\begin{array}{lll}
F(x,2t),
&
0 \leq t \leq 1/2
\\
H(x,2t-1),
&
1/2 \leq t \leq 1.
\end{array}
\right.
\]
Let $\mu(t)=T_{\sigma, \tau}(2t)-T_{\sigma, \tau}(2t-1)$ and define $G^{\prime} \colon A \times I \times I \to B$ by
$G^{\prime}(a,t,s)=H^{\prime}(a,(1-T_{\sigma, \tau}(s)) \mu(t))$.
Then $G^{\prime}$ gives
$
G^{\prime}(a,t,0)=H^{\prime}(a,\mu(t)),\
G^{\prime}(a,t,1)=f(a)
$
and $G^{\prime}(a,t,s)=f(a)$ for any $(a,t,s) \in A \times \partial I \times I.$
Then there is a tame homotopy $\tilde{H} \colon X \times I \times I \to Y$ satisfying
$\tilde{H}(x,t,0)=H^{\prime}(x, \mu(t))$ and $\tilde{H}|A \times I \times I=G^{\prime}$ by Lemma \ref{lmm:HEP}.
Thus we have a smooth map $F^{\prime} \colon X \times I \to Y$ given by the formula
\[
F^{\prime}(x,t)=
\left\{
\begin{array}{lll}
\tilde{H}(x,0,3t), & 0 \leq t \leq 1/3 \\
\tilde{H}(x,3t-1,1), & 1/3 \leq t \leq 2/3 \\
\tilde{H}(x,1,3-3t), & 2/3 \leq t \leq 1.
\end{array}
\right.
\]
Then $F^{\prime}$ is a tame homotopy relative to $A$ satisfying $F^{\prime}(x,0)=f(x)$ and $F^{\prime}(x,1) \in B$.
\end{proof}
In {\cite[Chapter 10]{May}},
the following connectedness is introduced.
\begin{dfn}\label{dfn:N-connected}
We say that a diffeological space $X$ is $N$-connected if $\pi_{n}(X,x_{0})=0$ for each $0\leq n \leq N$.
Clearly,
$X$ is $0$-connected if and only if it is path connected.
We say that a pair of diffeological spaces $(Y,B)$ is $N$-connected if $\pi_{0}B \to \pi_{0}Y$ is surjective and
$\pi_{n}(Y,B,b)=0$ for $1 \leq n \leq N$ and all $b \in B$.
\end{dfn}
We have the basic extension property.
\begin{prp}
Let $(X,A)$ be a $N$-dimensional relative gathered CW complex and let $Y$ be a $(N-1)$-connected space.
Then any smooth map $f \colon A \to Y$ can be extended to a smooth map $g \colon X \to Y$.
\end{prp}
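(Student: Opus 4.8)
The plan is to extend $f$ one skeleton at a time, so that the whole argument reduces to a single \emph{one-cell extension} statement. Write the skeleta of $(X,A)$ as $A=X^{-1}\to X^{0}\to\cdots\to X^{N}=X$ as in Definition \ref{dfn:definition of smooth CW complexes}, and suppose inductively that $f$ has been extended to a smooth map $g^{n-1}\colon X^{n-1}\to Y$ (with $g^{-1}=f$). For each $n$-cell $\lambda\in\Lambda_{n}$ the composite $g^{n-1}\circ\phi_{\lambda}\colon\partial I^{n}\to Y$ is tame: the attaching map $\phi_{\lambda}$ is tame because $(X,A)$ is gathered, and tameness (being locally constant in the normal direction near each face, Definition \ref{dfn:definition of tame maps}) is preserved under post-composition with the smooth map $g^{n-1}$. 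If each such tame map extends to a tame map $\bar{\Phi}_{\lambda}\colon I^{n}\to Y$, then $g^{n-1}$ and the family $\{\bar{\Phi}_{\lambda}\}$ agree on the attached boundaries, so by the pushout square defining $X^{n}$ together with the subduction of Lemma \ref{subduction of final diffeology} they glue to a smooth map $g^{n}\colon X^{n}\to Y$ extending $g^{n-1}$; passing to the colimit over $n$ yields the desired $g\colon X\to Y$.

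Thus everything rests on the following claim, which I would isolate as a lemma: if $0\le n\le N$ and $Y$ is $(n-1)$-connected (Definition \ref{dfn:N-connected}), then every tame map $g\colon\partial I^{n}\to Y$ extends to a tame map $I^{n}\to Y$. The range is exactly right, since an $n$-cell with $n\le N$ requires only $\pi_{n-1}(Y)=0$ and $n-1\le N-1$. For $n=0$ one has $\partial I^{0}=\emptyset$, so the extension is free (pick any point of the nonempty space $Y$); for $n=1$ it is the existence of a tame path between the two boundary values, which follows from $\pi_{0}(Y)=0$ after reparametrising by the function $T_{\sigma,\tau}$ of Lemma \ref{lmm:modified smash function}.

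For $n\ge 2$ I would prove the claim in three steps. First, restrict $g$ to the horn $J^{n-1}$ and extend it, by Proposition \ref{extend of J}, to a tame map $G\colon I^{n}\to Y$. Second, compare $G$ and $g$ on the remaining bottom face $I^{n-1}\times\{0\}$: the two tame maps $v_{0}=G(\cdot,0)$ and $v_{1}=g(\cdot,0)$ agree on $\partial I^{n-1}$, so their difference determines a class in $\pi_{n-1}(Y)$, which vanishes by hypothesis; hence $v_{0}\simeq v_{1}$ rel $\partial I^{n-1}$, and after reparametrising the homotopy variable one may take this homotopy $K\colon I^{n-1}\times I\to Y$ (from $v_{0}$ to $v_{1}$) to be tame. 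Third, insert $K$ into a collar of the bottom face: choosing $\epsilon$ so small that both $g$ and $G$ are $\epsilon$-tame there, set
\[
\bar{\Phi}_{\lambda}(s,t)=
\begin{cases}
K\bigl(s,\,1-t/\epsilon\bigr), & 0\le t\le\epsilon,\\
G(s,t), & \epsilon\le t\le 1,
\end{cases}
\]
for $s\in I^{n-1}$. At $t=0$ this gives $K(s,1)=v_{1}(s)=g(s,0)$, and at $t=\epsilon$ it gives $K(s,0)=v_{0}(s)=G(s,0)=G(s,\epsilon)$, so the two pieces match; moreover tameness of $g$ forces $g(s,t)=g(s,0)$ for $t\le\epsilon$, so $\bar{\Phi}_{\lambda}$ agrees with $g$ on all of $\partial I^{n}$ and is a tame extension.

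The routine parts are the gluing by subductions and colimits in the first paragraph and the reparametrisations that make the various maps tame. The main obstacle is the middle step of the sub-lemma: converting the vanishing of $\pi_{n-1}(Y)$ into an honest \emph{tame} homotopy rel $\partial I^{n-1}$ between $v_{0}$ and $v_{1}$, and then splicing this homotopy with the horn extension $G$ so that the result is tame on the whole cube. This is where the diffeological difficulty lives — one cannot simply appeal to a smooth retraction $I^{n}\to J^{n-1}$ — and it is precisely the tameness bookkeeping, controlling the $\epsilon$-behaviour of $K$ in the $I^{n-1}$-directions as well as in the homotopy direction, that must be verified with care.
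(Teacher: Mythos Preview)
Your inductive scaffolding and the reduction to a one-cell extension are the same as the paper's, but the heart of the cell-by-cell step is organized differently, and your version contains a circularity that the paper's avoids.

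In your Step 2 you assert that $v_0$ and $v_1$ ``differ by a class in $\pi_{n-1}(Y)$'' and hence are homotopic rel $\partial I^{n-1}$. But $v_0|_{\partial I^{n-1}}=v_1|_{\partial I^{n-1}}$ is \emph{not} a single point, so neither map represents an element of $\pi_{n-1}(Y,y_0)$ and there is no ``difference class'' in the literal sense. What you actually need is that the tame map $h\colon\partial I^n\to Y$ given by $v_0$ on the bottom, $v_1$ on the top, and the constant homotopy on the sides extends tamely to $I^n$. That is exactly the same type of statement as the sub-lemma you are proving, in the same dimension; invoking $\pi_{n-1}(Y)=0$ does not by itself bridge this gap in the diffeological setting.

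The paper sidesteps this by working one dimension up. Rather than horn-extending on $I^n$ and then repairing the bottom face, it first uses $(n-1)$-connectedness to produce a \emph{tame null-homotopy} $H_\lambda\colon\partial I^n\times I\to Y$ of $g^{n-1}\circ\phi_\lambda$ to a constant $y_0$; capping with the constant on $I^n\times\{1\}$ yields a tame map $H'_\lambda\colon J^n\to Y$, which Proposition~\ref{extend of J} extends to $\tilde H_\lambda\colon I^n\times I\to Y$; the desired $g_\lambda$ is then $\tilde H_\lambda(\cdot,0)$. This uses the connectedness hypothesis once, on the whole boundary sphere, and needs only a single horn extension with no splicing. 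Your route can be salvaged, but doing so amounts to proving the null-homotopy statement the paper uses anyway (e.g.\ by first homotoping the common boundary to a point via Lemma~\ref{prp:smooth retraction} and then appealing to $\pi_{n-1}(Y,y_0)=0$), after which the extra collar-insertion in your Step 3 is redundant.
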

\begin{proof}
Suppose we inductively construct a smooth map $g^{n-1} \colon X^{n-1} \to Y$ satisfying $g^{n-1} |X^{n-2}=g^{n-2}$.
For each $\lambda \in \Lambda_{n}$,
let $\phi_{\lambda} \colon \partial I^{n} \to X^{n-1}$ and $\Phi_{\lambda} \colon I^{n} \to X^{n}$ be attaching and characteristic maps,
respectively.
Since $Y$ is $(N-1)$-connected,
there exists a tame map $H_{\lambda} \colon \partial I^{n} \times I \to Y$ between $g^{n-1} \circ \phi_{\lambda}$ and the constant map $C_{y_{0}}$ with value $y_{0} \in Y$.
We define a tame map $H^{\prime}_{\lambda} \colon J^{n} \to Y$ by putting
\[
H^{\prime}_{\lambda}(t,s)=
\left\{
\begin{array}{lll}
H_{\lambda}(t,s), & (t,s) \in \partial I^{n} \times I \\
y_{0}, & (t,s) \in I^{n} \times \{1\}.
\end{array}
\right.
\]
Then there is a tame map $\tilde{H}_{\lambda} \colon I^{n} \times I \to Y$ extending $H^{\prime}_{\lambda}$ by Lemma \ref{extend of J}.
Let $g_{\lambda} \colon I^{n} \to Y$ be a smooth map given by $g_{\lambda}(t)=\tilde{H}_{\lambda}(t,1)$ for all $t \in I^{n}$.
By Lemma \ref{subduction of final diffeology},
there is a smooth map $g^{n} \colon X^{n} \to Y$ such that the following diagram commutes
\[
\xymatrix@C=60pt{%
X^{n-1} \coprod_{\lambda \in \Lambda_{n}} I^{n} 
\ar[r]^-{g^{n-1} \bigcup_{\lambda} g_{\lambda}} 
\ar[d]_-{i_{n} \bigcup_{\lambda} \Phi_{\lambda}} & Y
\\
X^{n}.
\ar@{.>}[ru]_-{\exists g^{n}}
}
\]
Then we have a smooth map $g \colon X \to Y$ given by $g|X^{n}=g^{n}$.
\end{proof}
\begin{prp}\label{prp:homotopic}
Let $(X,A)$ be a $N$-dimensional relative gathered CW complex and let $f \colon (X,A) \to (Z,C)$ be a smooth map of pairs.
If a pair $(Z,C)$ is $N$-connected,
there exists a smooth map of pairs $g \colon (X,A) \to (Z,C)$ satisfying $g(X) \subset C$ and a tame homotopy of pairs $f \simeq g$ relative to $A$.
\end{prp}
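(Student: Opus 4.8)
The plan is to induct over the skeleta of $(X,A)$, at each stage first transporting the homotopy built so far across the newly attached cells by the homotopy extension property and then compressing those cells into $C$ by means of the $N$-connectedness of $(Z,C)$; since $X=X^{N}$ the induction terminates after the $N$-skeleton. Concretely, I would construct for each $-1\le n\le N$ a smooth map $g^{n}\colon X^{n}\to Z$ with $g^{n}(X^{n})\subset C$, together with a tame homotopy of pairs $F^{n}\colon X^{n}\times I\to Z$ from $f|_{X^{n}}$ to $g^{n}$ which is relative to $A$ and restricts to $F^{n-1}$ on $X^{n-1}\times I$. The base case is $n=-1$: here $X^{-1}=A$, the map $g^{-1}=f|_{A}$ already lands in $C$ because $f$ is a map of pairs, and $F^{-1}$ is the constant homotopy.

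For the inductive step, assume $g^{n-1}$ and $F^{n-1}$ are given. Since $(X^{n},X^{n-1})$ is a relative gathered CW complex with cells only in dimension $n$, I would apply Lemma \ref{lmm:HEP} to the smooth map $f|_{X^{n}}$ and the tame homotopy $F^{n-1}$, obtaining a tame homotopy $\bar F^{n}\colon X^{n}\times I\to Z$ with $\bar F^{n}|_{X^{n-1}\times I}=F^{n-1}$ and $\bar F^{n}(x,0)=f(x)$. Writing $\bar g^{n}=\bar F^{n}(-,1)$, the restriction $\bar g^{n}|_{X^{n-1}}=g^{n-1}$ already maps into $C$, but on the open $n$-cells it need not. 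For each $\lambda\in\Lambda_{n}$ the composite $\psi_{\lambda}=\bar g^{n}\circ\Phi_{\lambda}\colon I^{n}\to Z$ is a tame map of pairs $(I^{n},\partial I^{n})\to(Z,C)$, because $\psi_{\lambda}|_{\partial I^{n}}=g^{n-1}\circ\phi_{\lambda}$ takes values in $C$.

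The heart of the step is a single-cell compression: since $(Z,C)$ is $N$-connected and $n\le N$, each $\psi_{\lambda}$ should be tamely homotopic rel $\partial I^{n}$ to a map into $C$, that is, there is a tame homotopy $G_{\lambda}\colon I^{n}\times I\to Z$ with $G_{\lambda}(-,0)=\psi_{\lambda}$, with $G_{\lambda}(t,s)=\psi_{\lambda}(t)$ for $t\in\partial I^{n}$, and with $G_{\lambda}(I^{n}\times\{1\})\subset C$. This is the compression criterion: a map of pairs $(I^{n},\partial I^{n})\to(Z,C)$ is homotopic rel $\partial I^{n}$ into $C$ exactly when the class it determines in $\pi_{n}(Z,C,c)$ vanishes, and that class is $0$ here (for $n=0$ one uses instead the surjectivity of $\pi_{0}C\to\pi_{0}Z$ and a tame path joining the $0$-cell to a point of $C$). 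The tame homotopy $G_{\lambda}$ itself I would manufacture with the paper's machinery, realising the prescribed data on $L^{n}=\partial I^{n}\times I\cup I^{n}\times\{0\}$ and filling the cube $I^{n+1}$ by a tame extension across the horn as in Proposition \ref{extend of J}, the vanishing class supplying the remaining face that lands in $C$.

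The homotopies $G_{\lambda}$ agree, along the attaching maps, with the constant homotopy of $g^{n-1}$, so by the subduction $(i_{n}\times 1_{I})\bigcup_{\lambda}(\Phi_{\lambda}\times 1_{I})$ of Lemma \ref{subduction of final diffeology} they glue to a tame homotopy $G^{n}\colon X^{n}\times I\to Z$ rel $X^{n-1}$ from $\bar g^{n}$ to a map $g^{n}$ with $g^{n}(X^{n})\subset C$, exactly as in the gluing step of Lemma \ref{lmm:HEP}. Concatenating $\bar F^{n}$ with $G^{n}$ and reparametrising the time coordinate by some $T_{\sigma,\tau}$ to keep the result tame produces $F^{n}$, and the pairs $(g^{n},F^{n})$ for $-1\le n\le N$ assemble, again through the subduction $\bigcup_{n}(j_{n}\times 1_{I})$, into the required $g\colon(X,A)\to(Z,C)$ with $g(X)\subset C$ and the tame homotopy $f\simeq g$ rel $A$. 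The main obstacle is the compression criterion in this smooth, cube-based setting: I must convert the vanishing of $\pi_{n}(Z,C,c)$, whose representatives collapse the whole horn $J^{n-1}$ to the basepoint, into a homotopy rel the entire boundary $\partial I^{n}$ of a map that only sends $\partial I^{n}$ into $C$, and I must do this through tame homotopies. Reconciling the horn-based normalisation with a boundary-fixed compression, together with the low-dimensional cases $n=0,1$, is the only genuinely delicate point; the remainder is the skeletal bookkeeping and subduction gluing already established for Lemma \ref{lmm:HEP} and the preceding extension proposition.
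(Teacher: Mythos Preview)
Your approach is essentially the paper's: induct over skeleta, extend the already-built homotopy across the new $n$-cells, compress each cell into $C$ using $N$-connectedness, and glue via the subduction of Lemma \ref{subduction of final diffeology}. The only organizational difference is that you first apply Lemma \ref{lmm:HEP} globally to get $\bar F^{n}$ on all of $X^{n}$ and then compress, whereas the paper extends cell by cell via Lemma \ref{prp:smooth retraction}, compresses, and then glues; the two orders are equivalent.

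The one place your write-up is incomplete is exactly the point you flag as the ``main obstacle'': converting the vanishing of $\pi_{n}(Z,C,c)$ into a tame homotopy \emph{rel $\partial I^{n}$} for a map that merely sends $\partial I^{n}$ into $C$. The paper does not do this by a direct horn-filling argument. Instead it observes that $N$-connectedness yields a tame homotopy \emph{of pairs} $\tilde F'_{\lambda}\colon (I^{n},\partial I^{n})\times I\to(Z,C)$ from $f_{\lambda}$ to a map landing in $C$, and then applies Theorem \ref{HEP2} to the pair $(I^{n},\partial I^{n})$ to replace this by a tame homotopy $\tilde F''_{\lambda}$ \emph{relative to} $\partial I^{n}$ with the same endpoint property. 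That theorem is precisely the tool that bridges ``homotopy of pairs'' and ``homotopy rel boundary'' in this smooth setting, and invoking it closes your remaining gap cleanly. One minor slip: $\psi_{\lambda}=\bar g^{n}\circ\Phi_{\lambda}$ need not be tame, since characteristic maps of gathered complexes are not required to be tame; only the attaching maps are. This does not matter for the argument, but you should not assert it.
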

\begin{proof}
Since a pair $(Z,C)$ is $N$-connected,
for any $x \in X^{0} \setminus A$
there exists a tame path $f_{x} \colon I \to Z$ satisfying $f_{x}(0)=f(x)$ and $f_{x}(1) \in C$.
Let $F^{0} \colon X^{0} \times I \to Z$ be a tame homotopy relative to $A$ given by the formula
\[
F^{0}(x,t)=
\left\{
\begin{array}{lll}
f_{x}(t), & (x,t) \in (X^{0} \setminus A) \times I \\
f(x), & (x,t) \in A \times I,
\end{array}
\right.
\]
then it is smooth since $X^{0} \setminus A$ is discrete.
Suppose we inductively construct a smooth map of pairs $g^{n-1} \colon (X^{n-1},A) \to (Z,C)$ satisfying $g^{n-1}(X^{n-1}) \subset C$
and a tame homotopy of pairs $F^{n-1} \colon f|X^{n-1} \simeq g^{n-1}$ relative to $A$ satisfying $F^{n-1}|X^{n-2} \times I =F^{n-2}$.
Let $\phi_{\lambda} \colon \partial I^{n} \to X^{n-1}$ and $\Phi_{\lambda} \colon I^{n} \to X^{n}$ be attaching and characteristic maps,
respectively.
Let $F_{\lambda} \colon L^{n} \to Z$ be a smooth map given by the formula
\[
F_{\lambda}(t,s)=
\left\{
\begin{array}{lll}
F^{n-1}(\phi_{\lambda}(t),s), & (t,s) \in \partial I^{n} \times I \\
f(\Phi_{\lambda}(t)), & (t,s) \in I^{n} \times \{0\}.
\end{array}
\right.
\]
Then $F_{\lambda}$ can be extended to be an $\epsilon$-tame homotopy $\tilde{F}_{\lambda} \colon I^{n} \times I \to Z$.
Let $f_{\lambda} \colon (I^{n}, \partial I^{n}) \to (Z,C)$ be a smooth map of pairs given by 
$f_{\lambda}(t)=\tilde{F}_{\lambda}(t,1)$ for any $t \in I$.
Since $(Z,C)$ is $N$-connected,
there exists a tame homotopy of pairs $\tilde{F}^{\prime}_{\lambda} \colon I^{n} \times I \to Z$ satisfying 
$\tilde{F}^{\prime}_{\lambda}(t,0)=f_{\lambda}(t)$ and $\tilde{F}^{\prime}_{\lambda}(t,1) \in C$.
By Theorem \ref{HEP2},
there are smooth map of pairs $g_{\lambda} \colon (I^{n},\partial I^{n}) \to (Z,C)$ satisfying $g_{\lambda}(I^{n}) \subset C$ and a tame homotopy 
of pairs $\tilde{F}^{\prime \prime} \colon I^{n} \times I \to Z$ relative to $\partial I^{n}$ from $f_{\lambda}$ to $g_{\lambda}$.
Then we have a tame homotopy $G_{\lambda} \colon f \circ \Phi_{\lambda} \simeq g_{\lambda}$ relative to $\partial I^{n}$ given by the formula
\[
G_{\lambda}(t,s)=
\left\{
\begin{array}{lll}
\tilde{F}_{\lambda}(t,s), & 0 \leq s \leq ({2-\epsilon})/{2} \\
\tilde{F}^{\prime \prime}_{\lambda}(t, ({2s-2+\epsilon})/{\epsilon} ), & ({2-\epsilon})/{2} \leq s \leq 1.
\end{array}
\right.
\]
Then $G_{\lambda}|L^{n}=F_{\lambda}$ holds.
By Lemma \ref{subduction of final diffeology},
we have a tame homotopy of pairs $F^{n} \colon f|X^{n} \simeq g^{n}$ relative to $A$ such that the following diagram commutes
\[
\xymatrix@C=60pt{%
(X^{n-1} \times I)\coprod_{\lambda \in \Lambda_{n}} (I^{n} \times I)
\ar[r]^-{F^{n-1} \bigcup_{\lambda} G_{\lambda}} 
\ar[d]_-{(i_{n} \times 1_{I}) \bigcup_{\lambda}( \Phi_{\lambda} \times 1_{I}) } & Z
\\
X^{n} \times I
\ar@{.>}[ru]_-{\exists {F}^{n}}, }%
\]
where $g^{n} \colon (X^{n},A) \to (Z,C)$ is a smooth map of pairs given by $g^{n}(x)=F^{n}(x,1)$ for any $x \in X^{n}$.
Hence we have a smooth map of pairs $g \colon (X,A) \to (Y,B)$ satisfying $g(X) \subset C$ and 
a tame homotopy of pairs $F \colon f \simeq g$ relative to $A$ given by $g|X^{n}=g^{n}$ and $F|X^{n} \times I=F^{n}$,
respectively.
\end{proof}
%
%
%
%
%
%
%
%
\section{The homotopy extension lifting property and the Whitehead theorem}
In this section we will shall show that gathered CW complexes have the homotopy extension property and satisfy the Whitehead theorem.
First,
we give the following notion. 
\begin{dfn}\label{dfn:weak homotopy equivalence}
We say that a smooth map $\phi \colon X \to Y$ between diffeological spaces is a $N$-equivalence if for all $x \in X$ the induced map
\[
\phi_{\ast} \colon \pi_{n}(X,x) \to \pi_{n}(Y, \phi(x))
\]
is injective for $0 \leq n < N$ and surjective for $n<N$.
A smooth map $\phi \colon (X,A) \to (Y,B)$ between pairs of diffeological spaces is called a $N$-equivalence if $\phi \colon X \to Y$ and $\phi_{0} \colon A \to B$ are $N$-equivalences,
where $\phi_{0}$ is the restriction $\phi|A$.
If they are $N$-equivalences for all $n$,
$\phi$ is said to be a weak homotopy equivalence.
\end{dfn}
Let $\phi \colon X \to Y$ be a smooth map and let $M_{\phi}$ be the mapping cylinder given by the following pushout square
\[
\xymatrix{
X
\ar[r]^{\phi}
\ar[d]_{j_{0}}
&
Y
\ar[d]^{i}
\\
X \times I
\ar[r]_{\Phi}
&
M_{\phi},
}
\]
where $j_{0}(x)=(x,0)$ for any $x \in X$.
Then we put the subduction $\pi_{\phi}=i \bigcup \Phi \colon Y \coprod (X \times I) \to M_{\phi}$.
Let us define $j_{1} \colon X \to X \times I$ and $\gamma^{\prime} \colon X \times I \to Y$ by 
$j_{1}(x)=(x,1)$ and $\gamma^{\prime}(x,t)=\phi(x)$,
respectively.
We have the inclusion $j \colon X \to M_{\phi}$ and a retraction $\gamma \colon M_{\phi} \to Y$ induced by 
$j_{1}$ and $1_{Y} \bigcup \gamma^{\prime} \colon Y \coprod (X \times I) \to Y$,
respectively.
Then $\phi=\gamma \circ j$ holds.
Clearly,
$Y$ is a deformation retract of $M_{\phi}$.
\begin{lmm}\label{lmm:N-equivalence}
Let $(Z,C)$ be a relative gathered CW complex of dimension $\leq N$.
Let $\phi \colon X \to Y$ be a $N$-equivalence.
Let $f^{\prime} \colon C \to X$ and $g \colon Z \to Y$ be two smooth maps such that $\phi \circ f^{\prime}=g|C$ holds.
Then there exists a smooth map $f \colon Z \to X$ such that $\phi \circ f$ and $g$ are homotopic relative to $C$.
\end{lmm}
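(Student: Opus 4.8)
The plan is to pass to the mapping cylinder $M_{\phi}$ and reduce everything to the compression result Proposition \ref{prp:homotopic}. First I would observe that the inclusion $j \colon X \to M_{\phi}$ is again an $N$-equivalence: since $\gamma \colon M_{\phi} \to Y$ is a homotopy equivalence (as $Y$ is a deformation retract of $M_{\phi}$) and $\phi = \gamma \circ j$, the map $j_{\ast}$ agrees with $\phi_{\ast}$ up to the isomorphism $\gamma_{\ast}$. Feeding this into the long exact sequence of homotopy groups of the pair $(M_{\phi},X)$ then yields $\pi_{n}(M_{\phi},X)=0$ for all $n \leq N$, together with surjectivity of $\pi_{0}X \to \pi_{0}M_{\phi}$; that is, $(M_{\phi},X)$ is $N$-connected in the sense of Definition \ref{dfn:N-connected}. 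Establishing this carefully from Definition \ref{dfn:weak homotopy equivalence} is one of the points that will need attention.

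Next I would manufacture a smooth map of pairs $(Z,C) \to (M_{\phi},X)$ whose $Z$-part is homotopic to $i \circ g$ and whose $C$-part is $j \circ f^{\prime}$. Starting from $i \circ g \colon Z \to M_{\phi}$, its restriction to $C$ is $i \circ \phi \circ f^{\prime} = i \circ (g|C)$. The cylinder coordinate supplies a homotopy $C \times I \to M_{\phi}$, $(c,t) \mapsto \Phi\!\left(f^{\prime}(c), T_{\sigma,\tau}(t)\right)$, from $i \circ \phi \circ f^{\prime}$ to $j \circ f^{\prime}$, where reparametrizing by $T_{\sigma,\tau}$ (Lemma \ref{lmm:modified smash function}) makes it tame. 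Applying the homotopy extension property (Lemma \ref{lmm:HEP}) for the relative gathered CW complex $(Z,C)$ with initial map $i \circ g$, I extend this to a tame homotopy $\tilde{\Theta} \colon Z \times I \to M_{\phi}$. Its terminal map $h = \tilde{\Theta}(-,1)$ satisfies $h|C = j \circ f^{\prime}$, so $h$ is a smooth map of pairs $(Z,C) \to (M_{\phi},X)$ with $h(C) \subset X$, and $h \simeq i \circ g$.

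Now I would invoke Proposition \ref{prp:homotopic}: since $(Z,C)$ has dimension $\leq N$ and $(M_{\phi},X)$ is $N$-connected, $h$ is tamely homotopic rel $C$ to a map landing in $X$, giving a smooth $f \colon Z \to X$ with $f|C = f^{\prime}$ and $j \circ f \simeq h \ \rel{C}$. Concatenating the homotopies $j \circ f \simeq h \simeq i \circ g$ (with the usual $T_{\sigma,\tau}$-reparametrization to keep the composite tame and smooth) yields $j \circ f \simeq i \circ g$ in $M_{\phi}$. Applying the retraction $\gamma$ and using $\gamma \circ j = \phi$ and $\gamma \circ i = 1_{Y}$ then produces $\phi \circ f = \gamma \circ j \circ f \simeq \gamma \circ i \circ g = g$. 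To see this homotopy is relative to $C$, note that on $C$ the cylinder homotopy becomes $\gamma\bigl(\tilde{\Theta}(c,t)\bigr) = \gamma\,\Phi\!\left(f^{\prime}(c),T_{\sigma,\tau}(t)\right) = \phi(f^{\prime}(c))$, which is constant, and the homotopy from Proposition \ref{prp:homotopic} is already rel $C$; hence the composite is constant on $C$ with value $g|C$.

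The main obstacle I anticipate is the tameness bookkeeping rather than the homotopy-theoretic idea: the cylinder homotopy must be re-tamed before Lemma \ref{lmm:HEP} applies, and each concatenation of tame homotopies must be reparametrized through $T_{\sigma,\tau}$ to remain tame and smooth. The only genuinely delicate verification is that $(M_{\phi},X)$ is $N$-connected, extracted from Definition \ref{dfn:weak homotopy equivalence} via the exact sequence of the pair; once that is in hand, the remaining steps are an assembly of Lemma \ref{lmm:HEP} and Proposition \ref{prp:homotopic} followed by applying $\gamma$.
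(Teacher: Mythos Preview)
Your proposal is correct and follows essentially the same route as the paper: pass to the mapping cylinder, use the long exact sequence to see that $(M_{\phi},X)$ is $N$-connected, slide $i\circ g$ along the tame cylinder homotopy via Lemma~\ref{lmm:HEP} to obtain a map of pairs into $(M_{\phi},X)$, compress into $X$ by Proposition~\ref{prp:homotopic}, and then apply the retraction $\gamma$ to recover $\phi\circ f \simeq g \rel{C}$. The only differences are expository: you spell out the tameness reparametrizations and the rel~$C$ verification more explicitly than the paper does.
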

\begin{proof}
Let $M_{\phi}$ be the mapping cylinder of $\phi$.
Since $\phi$ is a $N$-equivalence,
for every $x \in X$ the induced map
\[
j_{\ast} \colon \pi_{n}(X,x) \to \pi_{n}(M_{\phi},x)
\]
is bijective for $0 \leq n <N$ and surjective for $n=N$. 
Then we have $\pi_{n}(M_{\phi},X,x)=0$ for $1 \leq n \leq N$ by the properties of homotopy exact sequence (cf.~{\cite[Proposition 3.14]{HS}}).
Moreover the induced map $j_{\ast} \colon \pi_{0}(X) \to \pi_{0}(M_{\phi})$ is surjective since $\phi$ is $N$-equivalence.
Thus a pair $(M_{\phi},X)$ is $N$-connected.
Le us define a tame homotopy $H \colon C \times I \to M_{\phi}$ by the composition $\pi_{\phi} \circ (f^{\prime} \times T_{\epsilon,1/2})$.
Then for any $z \in C$,
we have
\[
H(z,0)=\pi_{\phi}(f^{\prime},0)=\phi f^{\prime}(z)=g(z) \ \mbox{and} \ H(z,1)= \pi_{\phi}(f^{\prime}(z),1)=f^{\prime}(z).
\]
By Lemma \ref{lmm:HEP},
there exists a tame homotopy $H^{\prime} \colon Z \times I \to M_{\phi}$ satisfying $H^{\prime}(z,0)=j g(z)$ and $H^{\prime}|C \times I=H$.
Let us define a smooth maps of pairs $g^{\prime} \colon (Z,C) \to (M_{\phi},X)$ by putting $g^{\prime}(z)=H^{\prime}(z,1)$ for $z \in Z$.
By Proposition \ref{prp:homotopic} there exists a smooth map of pairs $f \colon (Z,C) \to (M_{\phi},X)$ satisfying
$f(Z) \subset X$ and $g^{\prime} \simeq f$ rel $C$.
Then we have
\[
f|C=g^{\prime}|C=f^{\prime} \ \mbox{and} \ g \simeq \gamma \circ g^{\prime} \simeq \gamma \circ f =\phi \circ f \ \mbox{rel}\ C.
\]
\end{proof}
Then we have the following.
\begin{thm}[Homotopy extension lifting property]\label{HELP}
Let $(X,A)$ be a relative CW complex of dimension $\leq N$ and let $p \colon Y \to Z$ be a $N$-equivalence.
Then given smooth maps $f^{\prime} \colon A \to Y,\ g \colon X \to Z$ and a tame homotopy $F^{\prime} \colon A \times I \to Z$ satisfying 
$g|A=F^{\prime} \circ i_{0}$ and $p \circ f^{\prime}=F^{\prime} \circ i_{1}$ in the following diagram,
there are a smooth map $f \colon X \to Y$ and a tame homotopy 
$G \colon X \times I \to Z$ such that the following diagram commutes
\[
\xymatrix{
A
\ar[rr]^(0.45){i_{0}}
\ar[dd]_{j}
&
&
A \times I
\ar[dd]^(0.7){j \times 1}
\ar[ld]_{F^{\prime}}
&
&
A
\ar[ll]_(0.4){i_{1}}
\ar[dd]^{j}
\ar[ld]_{f^{\prime}}
\\
&
Z
&
&
Y
\ar[ll]_(0.3){p}
&
\\
X
\ar[ru]^(0.55){g}
\ar[rr]_(0.45){i_{0}}
&
&
X \times I
\ar@{.>}[lu]^{\exists G}
&
&
X.
\ar[ll]^{i_{1}}
\ar@{.>}[lu]^{\exists f}
}
\]
\end{thm}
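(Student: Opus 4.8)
The plan is to reduce the problem to the strict lifting situation already settled in Lemma \ref{lmm:N-equivalence}, and then to reconstitute the required homotopy $G$ by concatenation. Throughout, the gathered hypothesis on $(X,A)$ is what makes the homotopy extension property of Lemma \ref{lmm:HEP} available, while the dimension bound $\dim(X,A)\le N$ together with the assumption that $p$ is an $N$-equivalence is exactly what Lemma \ref{lmm:N-equivalence} requires.

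First I would apply the homotopy extension property. Since $F'$ is a tame homotopy with $F'(\cdot,0)=g|A$, Lemma \ref{lmm:HEP} produces a tame homotopy $\bar F\colon X\times I\to Z$ with $\bar F(\cdot,0)=g$ and $\bar F|A\times I=F'$. Writing $g_{1}:=\bar F(\cdot,1)\colon X\to Z$, the hypothesis $p\circ f'=F'\circ i_{1}$ gives $g_{1}|A=F'(\cdot,1)=p\circ f'$, so the compatibility is now \emph{strict}. Applying Lemma \ref{lmm:N-equivalence} to the relative gathered CW complex $(X,A)$, the $N$-equivalence $p$, and the maps $f'\colon A\to Y$, $g_{1}\colon X\to Z$ yields a smooth map $f\colon X\to Y$ with $f|A=f'$ together with a tame homotopy $L\colon X\times I\to Z$ relative to $A$ from $p\circ f$ to $g_{1}$. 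This $f$ is the desired lift, since $f\circ j=f'$.

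It remains to produce $G$. I would take $G$ to be the concatenation of $\bar F$ (running from $g$ to $g_{1}$, and restricting to $F'$ on $A\times I$) with the reverse of $L$ (running from $g_{1}$ back to $p\circ f$), reparametrising each half by the tame function $T_{\sigma,\tau}$ of Lemma \ref{lmm:modified smash function} so that the two pieces glue smoothly across $X\times\{1/2\}$, where both equal $g_{1}$. By construction $G(\cdot,0)=g$ and $G(\cdot,1)=p\circ f$, so $G\circ i_{0}=g$ and $p\circ f=G\circ i_{1}$, and the remaining triangles and squares of the diagram then commute.

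The main obstacle is the requirement $G\circ(j\times 1)=F'$ \emph{on the nose}. Naively the restriction of the concatenation to $A\times I$ is $F'$ reparametrised on $[0,1/2]$ followed by the constant homotopy at $p\circ f'$ on $[1/2,1]$, because $L$ is relative to $A$ and hence constant there; this is only $F'$ up to reparametrisation of the interval. I expect this to be the delicate point, and a global reparametrisation cannot fix it, since on $A$ the homotopy $F'$ reaches $p\circ f'$ only at time $1$ while on $X$ one still needs positive time for $L$. The clean resolution is to run the whole argument by induction over the skeleta of $(X,A)$, extending $G$ over each cell $I^{n}\times I$ with $n\le N$ \emph{relative to} $\partial I^{n}\times I$; the cell-level step uses the mapping cylinder $M_{p}$ of $p$ and the $N$-connectedness of the pair $(M_{p},Y)$, exactly as in the proof of Lemma \ref{lmm:N-equivalence}, and is supplied by Proposition \ref{prp:homotopic} applied to $(I^{n},\partial I^{n})\to(M_{p},Y)$. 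Extending relative to $\partial I^{n}\times I$ at every stage keeps $G$ equal to $F'$ on $A\times I$ throughout the induction, which yields the strict commutativity demanded by the diagram.
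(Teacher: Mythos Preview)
Your resolution is correct and is precisely the paper's approach: induction over the skeleta, extending $f$ and $G$ cell by cell while keeping them equal to $f'$ and $F'$ on the previously constructed skeleta. What is worth noting is that the paper's cell-level step is exactly your \emph{first} (global) argument, applied to the single cell $(I^{n},\partial I^{n})$ rather than to $(X,A)$. For each characteristic map $\Phi_{\lambda}$ the paper first uses Lemma~\ref{prp:smooth retraction} (the cell-level homotopy extension) to extend the data on $L^{n}=\partial I^{n}\times I\cup I^{n}\times\{0\}$ to a tame homotopy $K'_{\lambda}$ on all of $I^{n}\times I$, obtaining $g'_{\lambda}=K'_{\lambda}(\cdot,1)$ with $g'_{\lambda}\lvert\partial I^{n}=p\circ f^{n-1}\circ\phi_{\lambda}$; then invokes Lemma~\ref{lmm:N-equivalence} as a black box on the pair $(I^{n},\partial I^{n})$ to produce $f_{\lambda}\colon I^{n}\to Y$ together with a homotopy $K''_{\lambda}\colon g'_{\lambda}\simeq p\circ f_{\lambda}$ rel $\partial I^{n}$; and finally concatenates $K'_{\lambda}$ with $K''_{\lambda}$. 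Because $K''_{\lambda}$ is rel $\partial I^{n}$, the concatenation $G_{\lambda}$ restricts on $\partial I^{n}\times I$ to $G^{n-1}\circ(\phi_{\lambda}\times 1)$ on the nose, so the reparametrisation obstruction you identified in the global argument never appears.

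In short, your instinct about the structure (extend the homotopy, then lift via Lemma~\ref{lmm:N-equivalence}, then concatenate) was right; it only needed to be localised to one cell at a time. The paper does not unwind Lemma~\ref{lmm:N-equivalence} into the mapping cylinder and Proposition~\ref{prp:homotopic} at the cell level as you suggest---it simply calls the lemma directly---but that is a packaging difference, not a mathematical one.
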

\begin{proof}
Inductively,
suppose there are a smooth map $f^{n-1} \colon X^{n-1} \to Y$ and a tame homotopy $G^{n-1} \colon X^{n-1} \times I \to Z$ satisfying 
$G^{n-1}(x,0)=g(x)$,
$G^{n-1}(x,1)=pf^{n-1}(x)$,
$G^{n-1}|X^{n-2} \times I=G^{n-2}$ and $f^{n-1}|X^{n-2}=f^{n-2}$.
For any $\lambda \in \Lambda_{n}$,
let $\phi_{\lambda} \colon \partial I^{ n} \to X^{n-1}$ and $\Phi_{\lambda} \colon I^{n} \to X^{n}$ be attaching and characteristic map,
respectively.
Let $K_{\lambda} \colon L^{n} \to Z$ be a smooth map given by the formula 
\[ 
K_{\lambda}(t,s)=\left\{
\begin{array}{llll}
G^{n-1}(\phi_{\lambda}(t),s),
&
(t,s) \in \partial I^{n} \times I
\\
g\Phi_{\lambda}(t),
&
(t,s) \in I^{n} \times \{0 \}
\end{array}
\right.
\]
Then there exists a tame homotopy $K^{\prime}_{\lambda} \colon I^{n} \times I \to Z$ 
extending $K_{\lambda}$ by Lemma \ref{prp:smooth retraction}.
Let $g^{\prime}_{\lambda} \colon I^{n} \to Z$ be a tame map given by $g^{\prime}_{\lambda}(t)=K^{\prime}_{\lambda}(t,1)$ for any $t \in I^{n}$.
Then we have the following commutative diagram
\[
\xymatrix@C=50pt{
\partial I^{n}
\ar[r]^{f^{n-1}\circ \phi_{\lambda}}
\ar[d]_{i_{1}}
&
Y
\ar[d]^{p}
\\
I^{n}
\ar[r]_{g^{\prime}_{\lambda}}
&
Z.
}
\]
By Lemma \ref{lmm:N-equivalence},
there are smooth map $f_{\lambda} \colon I^{n} \to Y$ and an $\epsilon$-tame homotopy 
$K^{\prime \prime}_{\lambda} \colon g^{\prime}_{\lambda} \simeq p \circ f_{\lambda}$ relative to $\partial I^{n}$.
Then we have a tame homotopy $G_{\lambda} \colon g \circ \Phi_{\lambda} \simeq p \circ f_{\lambda}$ extending $K_{\lambda}$ 
given by the formula
\[
G_{\lambda}(t,s)=\left\{
\begin{array}{lll}
K^{\prime}_{\lambda}(t,s),
&
0 \leq s \leq (2-\epsilon)/2
\\
K^{\prime \prime}_{\lambda}(t, (2s-2+\epsilon)/\epsilon),
&
(2-\epsilon)/2\leq s \leq 1.
\end{array}
\right.
\]
Since 
the following smooth maps
\begin{enumerate}
\item
$i_{n} \bigcup_{\lambda} \Phi_{\lambda} \colon X^{n-1} \coprod_{\lambda \in \Lambda_{n}} I^{n} \to X^{n}$ and
\item
$
(i_{n} \times 1_{I}) \bigcup_{\lambda} (\Phi_{\lambda} \times 1_{I})
\colon (X^{n-1} \times I) \coprod_{\lambda \in \Lambda_{n}} (I^{n} \times I) \to X^{n} \times I
$
\end{enumerate}
are subductive by Lemma \ref{subduction of final diffeology},
we have a smooth map $f^{n} \colon X^{n} \to Y$ and a tame homotopy $G^{n} \colon X^{n} \times I \to Z$ induced by 
$f^{n-1} \bigcup_{\lambda} f_{\lambda}$ and $G^{n-1} \bigcup_{\lambda} G_{\lambda}$,
respectively.
Clearly,
$G^{n}(x,0)=g(x)$,
$G^{n}(x,1)=pf^{n}(x)$,
$G^{n}|X^{n-1} \times I=G^{n-1}$ and $f^{n}|X^{n-1}=f^{n-1}$ hold.
Hence we have a smooth map $f \colon X \to Y$ and a tame homotopy $G \colon X \times I \to Z$ such that the diagram above commutes.
\end{proof} 
Next,
we will introduce the following direct and important applications of the homotopy extension lifting property.
The cube $I^{n}$ is subdivided into the subcubes
\[
K_{J_{k}}=
\left[
\frac{j_{1}-1}{k}, \frac{j_{1}}{k}
\right]
\times \cdots \times
\left[
\frac{j_{n}-1}{k}, \frac{j_{n}}{k}
\right],
\]
where $J_{k}=(j_{1}, \cdots , \ j_{n}) \in \{1, \cdots, \ k \}^{n}$.
To prove Theorem \ref{thm:extends weak} and Lemma \ref{lmm:cellular of cube},
we will give the following smooth map $\mathbb{T}^{n}_{k,\sigma, \tau} \colon I^{n} \to I^{n}$ such that its restriction to $K_{J_{k}}$ is tame for
each $J_{k}$.
\begin{lmm}\label{lmm:locally tame map}
Suppose $0 \leq \sigma < \tau \leq 1/2$.
Then there exists a locally tame map $\mathbb{T}^{n}_{k, \sigma, \tau} \colon I^{n} \to I^{n}$ satisfying the following conditions.
\begin{enumerate}
\item
For all $J_{k}$,
the restrctions $\mathbb{T}^{n}_{k, \sigma, \tau}|K_{J_{k}} \colon K_{J_{k}} \to K_{J_{k}}$ is surjective and tame under the evident diffeomorphism
$I^{n} \cong K_{J_{k}}$.
\item
The identity $1_{I^{n}}$ and $\mathbb{T}^{n}_{k,\sigma, \tau}$ are homotopic.
\end{enumerate}
\end{lmm}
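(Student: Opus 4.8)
The plan is to realize $\mathbb{T}^n_{k,\sigma,\tau}$ as the $n$-fold product of a single one-dimensional building block obtained by transporting the smash function $T_{\sigma,\tau}$ of Lemma~\ref{lmm:modified smash function} onto each of the $k$ subintervals $[(j-1)/k,\,j/k]$ of $I$. First I would define a map $\theta_k\colon I \to I$ by setting, for $t \in [(j-1)/k,\,j/k]$ with $1 \le j \le k$,
\[
\theta_k(t) = \frac{j-1}{k} + \frac{1}{k}\,T_{\sigma,\tau}(kt - j + 1),
\]
so that on each subinterval $\theta_k$ is exactly $T_{\sigma,\tau}$ carried through the evident affine diffeomorphism $I \cong [(j-1)/k,\,j/k]$.

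The essential step is to check that this piecewise formula assembles into a globally smooth map. Near a grid point $t = j/k$ the piece coming from the subinterval on its left involves $T_{\sigma,\tau}$ evaluated near $1$, while the piece on its right involves $T_{\sigma,\tau}$ evaluated near $0$. By conditions (1) and (3) of Lemma~\ref{lmm:modified smash function}, $T_{\sigma,\tau}$ is constant on $(-\infty,\sigma]$ and on $[1-\sigma,\infty)$, so all its derivatives vanish at $0$ and at $1$. Hence both one-sided expressions for $\theta_k$ have all derivatives equal to zero at $j/k$ and share the common value $j/k$ there, so the pieces agree to infinite order and $\theta_k$ is smooth; this flatness of $T_{\sigma,\tau}$ near its endpoints is exactly the tameness that makes the construction work. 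I then set
\[
\mathbb{T}^n_{k,\sigma,\tau}(t_1,\dots,t_n) = \bigl(\theta_k(t_1),\dots,\theta_k(t_n)\bigr),
\]
which is smooth and, by this flatness, tame on each subcube, hence locally tame in the required sense.

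For condition~(1), the restriction of $\mathbb{T}^n_{k,\sigma,\tau}$ to a subcube $K_{J_k}$ is the product of the restrictions $\theta_k|[(j_i-1)/k,\,j_i/k]$, each of which transports to $T_{\sigma,\tau}$ under $I \cong [(j_i-1)/k,\,j_i/k]$; therefore $\mathbb{T}^n_{k,\sigma,\tau}|K_{J_k}$ transports to $T^n_{\sigma,\tau}\colon I^n \to I^n$, which is surjective (since $T_{\sigma,\tau}$ is onto $I$) and $\sigma$-tame, giving exactly the surjectivity and tameness demanded. For condition~(2) I would use the straight-line homotopy
\[
H(t,u) = (1-u)\,t + u\,\mathbb{T}^n_{k,\sigma,\tau}(t), \qquad (t,u) \in I^n \times I,
\]
which is smooth and, by convexity of $I^n$, takes values in $I^n$; it connects $1_{I^n}$ at $u=0$ to $\mathbb{T}^n_{k,\sigma,\tau}$ at $u=1$. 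The main obstacle is the smoothness across the subcube boundaries addressed in the second paragraph: every other assertion is a direct transport of the one-dimensional statement through the affine identifications, but the gluing succeeds only because $T_{\sigma,\tau}$ is already flat to infinite order near $0$ and $1$.
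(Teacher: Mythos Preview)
Your proof is correct and follows essentially the same approach as the paper: the paper defines $\mathbb{T}^{n}_{k,\sigma,\tau}$ on each subcube $K_{J_k}$ as $T^{n}_{\sigma,\tau}$ transported through the affine identification $I^n \cong K_{J_k}$, notes it is ``smooth by tameness'' (exactly the flatness-at-the-endpoints argument you spell out), and uses the same straight-line homotopy $H(t,s)=(1-s)t+s\,\mathbb{T}^{n}_{k,\sigma,\tau}(t)$ for condition~(2). Your factorization through the one-dimensional $\theta_k$ simply makes explicit the product structure already built into $T^{n}_{\sigma,\tau}$.
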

\begin{proof}
We define $\mathbb{T}^{n}_{k, \sigma, \tau} \colon I^{n} \to I^{n}$ by 
$
\mathbb{T}^{n}_{k, \sigma, \tau}|K_{J_{k}}=
T^{n}_{\sigma, \tau} \colon I^{n} \cong K_{J_{k}} \to K_{J_{k}}
$
for each $J_{k}$.
Then it is smooth by tameness.
Moreover,
if we put
\[
H(t,s)=(1-s)t+s\mathbb{T}^{n}_{k,\sigma, \tau}(t),\ \mbox{for}\ (t,s) \in I^{n} \times I
\]
then $H \colon I^{n} \times I \to I^{n}$ gives $1_{I^{n}} \simeq \mathbb{T}^{n}_{k, \sigma, \tau}$.
\end{proof}
Let $A_{1}$ and $A_{2}$ be subspaces of a diffeologilcal space $X$.
We say that $(X;A_{1},A_{2})$ is a excisive triad if $X$ is the union of the interiors of $A$ and $B$.
We note that $A_{2}$ is not required to be a subspace of $A_{1}$.
Then we have the following.
\begin{thm}\label{thm:extends weak}
If $\phi \colon (X;A_{1},A_{2}) \to (Y;B_{1},B_{2})$ is a smooth map of excive triads such that the restrictions
\[
\phi \colon A_{1} \to B_{1},\ \phi \colon A_{2} \to B_{2} \ \mbox{and}\ \phi \colon A_{1} \cap A_{2} \to B_{1} \cap B_{2}
\]
are weak homotopy equivalences,
then $\phi \colon X \to Y$ is a weak homotopy equivalence.
\end{thm}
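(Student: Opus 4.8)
The plan is to deduce the statement from a lifting criterion for weak homotopy equivalences together with the cubical subdivision supplied by Lemma \ref{lmm:locally tame map}. First I would record the standard reformulation: a smooth map $\phi \colon X \to Y$ is a weak homotopy equivalence if and only if, for every $n \geq 0$ and every commutative square consisting of smooth maps $h \colon \partial I^{n} \to X$ and $g \colon I^{n} \to Y$ with $\phi \circ h = g|\partial I^{n}$, there exist a smooth map $\tilde{g} \colon I^{n} \to X$ extending $h$ and a tame homotopy $g \simeq \phi \circ \tilde{g}$ rel $\partial I^{n}$. The case $n=0$ (where $\partial I^{0}=\emptyset$) gives surjectivity of $\phi_{\ast}$ on $\pi_{0}$, and the remaining cases encode injectivity and surjectivity of $\phi_{\ast}$ on the higher homotopy groups; this is the same translation already implicit in Lemma \ref{lmm:N-equivalence}. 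Thus it suffices to solve such a lifting problem for $\phi \colon X \to Y$.

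Given such a $g \colon I^{n} \to Y$, I would next localize it to the two halves of the triad. Since $\phi$ is a map of excisive triads, $\{\Int B_{1}, \Int B_{2}\}$ is a $D$-open cover of $Y$, and $g$ is $D$-continuous; as $TI^{n}$ is compact, the Lebesgue number lemma yields an integer $k$ such that each subcube $K_{J_{k}}$ satisfies $g(K_{J_{k}}) \subset B_{1}$ or $g(K_{J_{k}}) \subset B_{2}$. Replacing $g$ by $g \circ \mathbb{T}^{n}_{k,\sigma,\tau}$, which is homotopic to $g$ by Lemma \ref{lmm:locally tame map}(2) and whose restriction to each subcube is tame and still lands in $B_{1}$ or $B_{2}$, I may assume that $g$ itself is tame on every $K_{J_{k}}$ and carries each $K_{J_{k}}$ into one of $B_{1}, B_{2}$; the faces shared by a $B_{1}$-subcube and a $B_{2}$-subcube are then sent into $B_{1} \cap B_{2}$.

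Now I would build the lift by induction over the subcubes of the subdivided $I^{n}$, ordered by increasing dimension of faces exactly as in the skeletal induction of Theorem \ref{HELP}. Over a face whose image lies in $B_{j}$, the lift already constructed on its boundary takes values in $A_{j}$, and since $\phi \colon A_{j} \to B_{j}$ is a weak homotopy equivalence I can apply Lemma \ref{lmm:N-equivalence} to extend the lift over the whole face together with a tame homotopy rel its boundary; over a face lying in $B_{1} \cap B_{2}$ the boundary lift takes values in $A_{1} \cap A_{2}$, and here the hypothesis that $\phi \colon A_{1} \cap A_{2} \to B_{1} \cap B_{2}$ is a weak homotopy equivalence is precisely what allows the lifts chosen for the two adjacent subcubes to be made to agree. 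Splicing the resulting relative homotopies, as in the proofs of Lemma \ref{prp:smooth retraction} and Theorem \ref{HEP2}, produces the desired $\tilde{g}$ and the tame homotopy $g \simeq \phi \circ \tilde{g}$ rel $\partial I^{n}$; by the criterion of the first step this shows that $\phi$ is a weak homotopy equivalence.

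The main obstacle I expect is the coherence of the cellwise lifts across shared faces: one must arrange the skeletal induction so that, before lifting a subcube, the lift on its boundary already lands in the correct subspace ($A_{1}$, $A_{2}$, or $A_{1} \cap A_{2}$) and the accompanying homotopies match up, which forces the three weak-equivalence hypotheses to be used simultaneously and compatibly rather than independently. Keeping all the intermediate homotopies tame and relative to the appropriate faces, together with justifying the Lebesgue and subdivision step in the $D$-topology, are the delicate bookkeeping points; once the boundary data are in place, the actual fillers are immediate from Lemma \ref{lmm:N-equivalence}.
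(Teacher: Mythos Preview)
Your proposal is essentially correct and uses the same ingredients as the paper: a Lebesgue-number subdivision of the cube, the locally tame replacement $\mathbb{T}^n_{k,\sigma,\tau}$ of Lemma \ref{lmm:locally tame map}, and lifting along the three given weak equivalences via Lemma \ref{lmm:N-equivalence}.

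The paper organizes the lift differently, in a way that dissolves the ``main obstacle'' you flag. Rather than induct over all faces of the subdivision by dimension, the paper treats injectivity and surjectivity of $\phi_\ast$ separately, sets $M_i$ to be the union of all subcubes landing in $B_i$, and proceeds in two stages: first solve the lifting problem on the entire subcomplex $M_1 \cap M_2$ by one application of Lemma \ref{lmm:N-equivalence} to $\phi \colon A_1 \cap A_2 \to B_1 \cap B_2$, and then extend this lift to each $M_i$ by a single invocation of Theorem \ref{HELP} for the relative gathered CW pair $(M_i, M_1 \cap M_2)$ and the weak equivalence $\phi \colon A_i \to B_i$. This ``intersection first, then extend by HELP'' scheme guarantees automatically that the two lifts agree on $M_1 \cap M_2$ with values in $A_1 \cap A_2$, so no face-by-face coherence bookkeeping is needed. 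The separate treatment of surjectivity is also convenient because there the boundary datum is constant, so precomposing with $\mathbb{T}^n_{k,\sigma,\tau}$ leaves it fixed and the resulting homotopy is genuinely rel $\partial I^n$.

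One small gap in your sketch: you subdivide so that each $K_{J_k}$ lands in some $B_i$ under $g$, but for your induction you also need the given boundary datum $h \colon \partial I^n \to X$ to send each boundary subcube into the matching $A_i$; otherwise the claim ``the lift already constructed on its boundary takes values in $A_j$'' fails at the subcubes meeting $\partial I^n$, since $\phi^{-1}(B_j)$ need not be contained in $A_j$. The paper handles this by simultaneously demanding $F(\partial I^{n+1} \cap K_{J_k}) \subset \Int A_i$ and $g(K_{J_k}) \subset \Int B_i$, which is arranged by also pulling back the cover $\{\Int A_1, \Int A_2\}$ of $X$ along the boundary map before choosing $k$.
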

\begin{proof}
First we show that the induced map $\phi_{\ast} \colon \pi_{n}(X,x) \to \pi_{n}(Y,\phi(x))$ is injective for every $n \geq 0$ and $x \in X$.
Let $f_{0}$ and $f_{1}$ be smooth maps of pairs from $(I^{n} \partial I^{n})$ to $(X,x)$ such that there exists a tame homotopy
$g \colon \phi \circ f_{0} \simeq \phi \circ f_{1}$ relative to $\partial I^{n}$.
Let $F \colon \partial I^{n+1} \to X$ be a tame map given by the formula
\[
F(t,s)=\left\{
\begin{array}{llll}
f_{0}(t),
&
(t,s) \in I^{n} \times \{0\}
\\
x,
&
(t,s) \in \partial I^{n} \times I
\\
f_{1}(t),
&
(t,s) \in I^{n} \times \{1\},
\end{array}
\right.
\]
then $\phi \circ F=g|\partial I^{n+1}$ holds.
By taking $k$ large enough,
we can cubically subdivide $I^{n}$ into subcubes $K_{J_{k}}$ such that 
\[
F(\partial I^{n} \cap K_{J_{k}}) \subset {\rm Int}A_{i}\ \mbox{and} \ g(K_{J_{k}}) \subset {\rm Int}B_{i},
\]
where $i$ is either $1$ or $2$.
Let us put $F^{\prime}=F \circ \mathbb{T}^{n+1}_{k,\sigma,\tau}$ and 
$G^{\prime}=g \circ \mathbb{T}^{n+1}_{k,\sigma,\tau}$.
By Lemma \ref{lmm:locally tame map},
we have the following conditions.
\begin{enumerate}
\item
The restrictions $F^{\prime}|\partial I^{n+1} \cap K_{J_{k}}$ and $G^{\prime}|K_{J_{k}}$ are tame for all $J_{k}$.
\item
$\phi \circ F^{\prime}=G^{\prime}|\partial I^{n+1},\ F \simeq F^{\prime} \ \mbox{and} \ g \simeq G^{\prime}$.
\item\label{2}
$F^{\prime}(\partial I^{n+1} \cap K_{J_{k}}) \subset {\rm Int}A_{i}$ and $G^{\prime}(K_{J_{k}}) \subset {\rm Int}B_{i}$\
($i=1$ or $2$).
\end{enumerate}
Let $M_{i}$ be the union of all subcubes $K_{J_{k}}$ satisfying the condition (\ref{2}) above.
Then each $M_{i}$ is a subcomplex of $I^{n+1}$ and $I^{n+1}=M_{1} \cup M_{2}$ holds.
By Lemma \ref{lmm:N-equivalence},
we can obtain a smooth map such that the left triangle in the diagram
\[
\xymatrix{
\partial I^{n} \cap (M_{1} \cap M_{2})
\ar[r]^(0.62){F^{\prime}}
\ar[d]
&
A_{1} \cap A_{2}
\ar[d]^{\phi}
\\
M_{1} \cap M_{2}
\ar[r]_{G^{\prime}}
\ar@{.>}[ru]_{\exists \tilde{F}^{\prime}}
&
B_{1} \cap B_{2}
}
\]
commutes,
together with a tame homotopy 
$
\tilde{G}^{\prime} \colon (M_{1} \cap M_{2}) \times I \to B_{1} \cap B_{2}
$
such that
\[
G^{\prime} \simeq \phi \circ \tilde{F}^{\prime} \ \mbox{rel} \ \partial I^{n+1} \cap (M_{1} \cap M_{2}).
\]
By Theorem \ref{HELP},
for each $i$ there are a smooth map $\tilde{F}^{\prime}_{i}$ and a tame homotopy $\tilde{H}_{i}$ such that the following diagram commutes
\[
\xymatrix{
M_{1} \cap M_{2}
\ar[rr]^(0.45){j_{0}}
\ar[dd]
&
&
(M_{1} \cap M_{2}) \times I
\ar[ld]_{\tilde{G}^{\prime}}
\ar[dd]
&
&
M_{1} \cap M_{2}
\ar[ll]_(0.42){j_{1}}
\ar[ld]_{\tilde{F}^{\prime}}
\ar[dd]
\\
&
B_{i}
&
&
A_{i}
\ar[ll]_(0.4){\phi}
&
\\
M_{i}
\ar[ru]^{G^{\prime}}
\ar[rr]_(0.46){j_{0}}
&
&
M_{i} \times I
\ar@{.>}[lu]^{\exists \tilde{H}_{i}}
&
&
M_{i}.
\ar[ll]^(0.44){j_{1}}
\ar@{.>}[lu]^{\exists \tilde{F}^{\prime}_{i}}
}
\]
We define a smooth map $\tilde{f} \colon I^{n+1} \to X$ by $\tilde{f}|M_{i}=\tilde{F}^{\prime}_{i}$ for $i=1,2$.
Then there exists a tame homotopy $H \colon \partial I^{n+1} \times I \to X$ satisfying $H(t,0)=\tilde{f}(t)$ and $H(t,1)=F(t)$.
Let $H^{\prime} \colon L^{n+1} \to X$ be a tame map given by the formula
\[
H^{\prime}(t,s)=
\left\{
\begin{array}{lll}
H(t,s),
&
(t,s) \in \partial I^{n+1} \times I
\\
\tilde{F}^{\prime \prime}(t),
&
(t,s) \in I^{n+1} \times \{0\}.
\end{array}
\right.
\]
By Lemma \ref{extend of J},
we have a homotopy $\tilde{H} \colon f_{0} \simeq f_{1}$ relative to $\partial I^{n}$.
Hence $\phi_{\ast}$ is injective.

Next,
we show that $\phi_{\ast}$ is surjective.
Let $g \colon (I^{n} \partial I^{n}) \to (Y, \phi(x))$ be a smooth map of pairs and 
let $f \colon \partial I^{n} \to X$ be a constant map with $x \in X$.
Then $\phi \circ f=g|\partial I^{n}$ holds.
By the similar argument above,
we have a smooth map of pairs $\tilde{f} \colon (I^{n}, \partial I^{n}) \to (X,x)$ satisfying 
\[
g \simeq \phi \circ \tilde{f}\ \mbox{rel}\ \partial I^{n}
\]
since $f$ is constant map and $G^{\prime} \colon I^{n} \times I \to Y$ is a homotopy relative to $\partial I^{n}$ between $g$ and 
$g \circ \mathbb{T}^{n}_{k,\sigma,\tau}$.
Thus $\phi_{\ast}$ is surjective.
\end{proof}
In topological homotopy theory,
the Whitehead theorem states that if a continuous map $f$ between topological CW complexes induces isomorphisms on all homotopy groups,
then $f$ is a homotopy equivalence (cf.~{\cite[Theorem 6]{JHC}}).
To prove the Whitehead theorem on gathered CW complexes,
we need the following.
\begin{prp}\label{prp:main prp}
Let $\phi \colon (X,A) \to (Y,B)$ be a $N$-equivalence.
Let $(Z,C)$ be a pair of gathered CW complexes.
Then the induced map
\[
\phi_{\ast} \colon 
[
Z,C;X,A
]
\to
[
Z,C;Y,B
]
\]
is injective if $\mbox{\rm dim} (Z,C) <N$ and surjective if $\mbox{\rm dim} (Z,C) \leq N$.
\end{prp}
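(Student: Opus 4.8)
The plan is to prove the two assertions---surjectivity when $\dim(Z,C)\le N$ and injectivity when $\dim(Z,C)<N$---by reducing each to two applications of the homotopy extension lifting property (Theorem \ref{HELP}): one application to the restriction $\phi_{0}=\phi|A\colon A\to B$ over the subcomplex $C$, and one to $\phi\colon X\to Y$ over the ambient relative complex. Both $\phi$ and $\phi_{0}$ are $N$-equivalences by Definition \ref{dfn:weak homotopy equivalence}. The essential point is that Theorem \ref{HELP} is stated for relative CW complexes that need \emph{not} be gathered, since its proof only requires tameness in the homotopy direction, which is supplied by the tame homotopy being extended. This lets me work with the products $Z\times I$ and $C\times I$, which by the Remark following Proposition \ref{prp:product of CW} fail to be gathered; the gathered hypothesis on $(Z,C)$ thus enters only through Lemma \ref{lmm:HEP} (on which Theorem \ref{HELP} rests) to keep the homotopies tame.

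For surjectivity, let $g\colon (Z,C)\to (Y,B)$ be a map of pairs. First I would lift $g|C$: applying Theorem \ref{HELP} to $\phi_{0}\colon A\to B$ and the relative complex $(C,\emptyset)$ (of dimension $\le N$) with the empty lifting datum yields a smooth map $f'\colon C\to A$ together with a tame homotopy $G'\colon C\times I\to B$ from $g|C$ to $\phi_{0}f'$. Next I would apply Theorem \ref{HELP} to $\phi\colon X\to Y$ and the relative complex $(Z,C)$, taking $g\colon Z\to Y$ as the map downstairs, the strict lift $f'\colon C\to X$ on the subcomplex, and $G'$ as the homotopy on $C$ (note $g|C=G'(-,0)$ and $\phi f'=\phi_{0}f'=G'(-,1)$). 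This produces $f\colon Z\to X$ extending $f'$ and a tame homotopy $G\colon Z\times I\to Y$ from $g$ to $\phi f$ with $G|(C\times I)=G'$. Since $f(C)=f'(C)\subseteq A$, the map $f$ is a map of pairs $(Z,C)\to (X,A)$, and since $G$ carries $C\times I$ into $B$ it is a homotopy of pairs; hence $[f]\mapsto [\phi f]=[g]$ and $\phi_{\ast}$ is surjective.

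For injectivity, suppose $f_{0},f_{1}\colon (Z,C)\to (X,A)$ satisfy $\phi f_{0}\simeq \phi f_{1}$ through a homotopy of pairs $G\colon Z\times I\to Y$, so $G$ carries $C\times I$ into $B$. I would work with the relative CW complex $(Z\times I,\,D)$, where $D=Z\times\partial I\cup C\times I$, whose relative dimension is $\dim(Z,C)+1\le N$. Running the injectivity step one dimension lower for $\phi_{0}$---that is, applying Theorem \ref{HELP} to $(C\times I,\,C\times\partial I)$---I would first produce a homotopy $K\colon C\times I\to A$ from $f_{0}|C$ to $f_{1}|C$ inside $A$. I would then assemble a lift $f''\colon D\to X$ equal to $f_{0}$ on $Z\times\{0\}$, to $f_{1}$ on $Z\times\{1\}$, and to $K$ on $C\times I$, together with a comparison homotopy on $D$ between $G|D$ and $\phi f''$. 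A final application of Theorem \ref{HELP} to $\phi$ over $(Z\times I,D)$ extends $f''$ to a smooth map $\tilde f\colon Z\times I\to X$; because $\tilde f$ restricts to $f_{0}$, $f_{1}$ on the ends and to $K$ (hence into $A$) on $C\times I$, it is exactly a homotopy of pairs $f_{0}\simeq f_{1}$, giving injectivity.

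The main obstacle I anticipate is the bookkeeping needed to present the data on $D=Z\times\partial I\cup C\times I$ as a single valid input for Theorem \ref{HELP}: one must glue $f_{0},f_{1},K$ into a smooth map on $D$ and simultaneously glue the three comparison homotopies (the two constant ones on $Z\times\partial I$ coming from $G(-,0)=\phi f_{0}$ and $G(-,1)=\phi f_{1}$, and the one on $C\times I$ furnished by the $C$-level argument) so that they agree along the corners $C\times\partial I$. Keeping every homotopy tame and every intermediate map a map of pairs---so that the output genuinely represents a class in $[Z,C;X,A]$---is where the machinery behind Theorem \ref{HELP} (Lemma \ref{lmm:HEP} and Proposition \ref{prp:homotopic}) must be invoked with care; the dimension hypotheses enter precisely to bring both the $A$-to-$B$ step over $C$ and the ambient step over $(Z,C)$ within range of the $N$-equivalences $\phi_{0}$ and $\phi$.
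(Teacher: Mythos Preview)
Your proposal follows essentially the same two--stage strategy as the paper: treat $C$ first against $\phi_{0}$, then extend to $Z$ against $\phi$. For surjectivity the arguments are identical---the paper's first step cites Lemma~\ref{lmm:N-equivalence} on $(C,\emptyset)$, which is exactly Theorem~\ref{HELP} with empty boundary data, and the second step is Theorem~\ref{HELP} on $(Z,C)$ in both versions.

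For injectivity the difference is one of packaging. The paper uses Lemma~\ref{lmm:N-equivalence} (which demands the \emph{strict} equality $\phi\circ\tilde F_{0}=\tilde G''|_{D}$) for the final lift over $(Z\times I,\,Z\times\partial I\cup C\times I)$. To achieve that strict equality it inserts an intermediate step you do not need: the rel--$(C\times\partial I)$ homotopy $G'_{0}\colon G|_{C\times I}\simeq\phi_{0}F_{0}$ is extended over all of $Z\times I\times I$ via Lemma~\ref{lmm:HEP}, and then reparametrised along three sides of the square to produce a new map $\tilde G''\colon Z\times I\to Y$ whose restriction to $D$ literally equals $\phi\circ\tilde F_{0}$. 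Your route through Theorem~\ref{HELP} allows a homotopy rather than an equality on $D$, so the ``main obstacle'' you flag---gluing the constant homotopies on $Z\times\partial I$ to the $C$--level comparison on $C\times I$---is precisely what the paper's construction of $\tilde G''$ accomplishes by other means. Both approaches land on the same lift $Z\times I\to X$.

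One point worth noting: you rely on the literal wording of Theorem~\ref{HELP} (``relative CW complex'', no gathered hypothesis) to justify applying it to $(Z\times I,D)$ and $(C\times I,C\times\partial I)$, which you correctly observe are not gathered. The paper instead asserts that $Z\times I$ \emph{is} gathered (citing Propositions~\ref{prp:example of CW complex} and~\ref{prp:product of CW}) and feeds it to Lemma~\ref{lmm:N-equivalence} and Lemma~\ref{lmm:HEP}. Either way the same product complexes are being fed to the same underlying machinery, so on this delicate point your argument is neither more nor less rigorous than the paper's own proof.
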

\begin{proof}
Suppose dim$(Z,C) \leq N$.
Let $g \colon (Z,C) \to (Y,B)$ be a smooth map of pairs.
Let us denote by $g_{0}$ the restriction $g|A$.
By Lemma \ref{lmm:N-equivalence},
there are a smooth map $f_{0} \colon C \to A$ and a tame homotopy $G_{0} \colon C \times I \to B$ between $g_{0}$ and $\phi_{0} \circ f_{0}$,
where $\phi_{0}=\phi|A$.
Then we have a smooth map of pairs $f \colon (Z,C) \to (X,A)$ and a tame homotopy of pairs $G \colon Z \times I \to Y$ between $g$ and $\phi_{0} \circ f$
by Theorem \ref{HELP}.
Thus $\phi_{\ast}$ is a surjection.

Suppose dim$(Z,C)<N$.
Let $f, f^{\prime} \colon (Z,C) \to (X,A)$ be smooth maps of pairs such that there is a tame homotopy of pairs $G \colon X \times I \to Y$
between $\phi \circ f$ and $\phi \circ f^{\prime}$.
Let us denote by $G_{0}$ the restriction $G|C\times I$.
We define $K_{0} \colon C \times \partial I \to A$ by putting $K_{0}(z,0)=f(z)$ and $K_{0}(z,1)=f^{\prime}(z)$.
By Lemma \ref{lmm:N-equivalence},
there are tame homotopys $F_{0} \colon C \times I \to A$ and $G^{\prime}_{0} \colon C \times I \times I \to B$ between 
$G_{0}$ and $\phi_{0} \circ F_{0}$ relative to $C \times \partial I$.
By Lemma \ref{lmm:HEP} there exists an $\epsilon$-tame homotopy $\tilde{G}^{\prime} \colon Z \times I \times I \to Y$ satisfying $\tilde{G}^{\prime}(z,t,0)=G(z,t)$ and
$\tilde{G}^{\prime}|C \times I \times I=G^{\prime}_{0}$.
Let $\tilde{G}^{\prime \prime} \colon Z \times I \to Y$ be a tame homotopy of pairs between $\phi \circ f$ and $\phi \circ f^{\prime}$ 
given by the formula
\[
\tilde{G}^{\prime \prime}(z,t)=
\left\{
\begin{array}{ll}
\tilde{G}^{\prime}\left(z,0, {2t}/{\epsilon}\right), & 0 \leq t \leq {\epsilon}/{2} \\
\tilde{G}^{\prime}(z,t,1), & {\epsilon}/{2} \leq t \leq ({2-\epsilon})/{2} \\
\tilde{G}^{\prime}\left(z,1,({2-2t}){\epsilon}\right), & ({2-\epsilon})/{2} \leq t \leq 1,
\end{array}
\right.
\]
then $\tilde{G}^{\prime \prime}|C \times I=\phi_{0} \circ F_{0}$ holds.
We define $\tilde{F}_{0} \colon Z \times \partial I \cup C \times I \to X$ by putting
\[
\tilde{F}_{0}(z,0)=f(z), \
\tilde{F}_{0}(z,1)=f^{\prime}(z) \ \mbox{and}\ \tilde{F}_{0}|C\times I=F_{0}.
\]
Since $Z \times \partial I \cup C \times I$ is a subcomplex of a gathered CW complex $Z \times I$ 
by Proposition \ref{prp:example of CW complex} and Proposition \ref{prp:product of CW},
there is a tame homotopy of pairs $F \colon Z \times I \to X$ between $f$ and $f^{\prime}$
by Lemma \ref{lmm:N-equivalence}.
Thus $\phi_{\ast}$ is injection.
\end{proof}
\begin{thm}[The Whitehead theorem]\label{thm:Whitehead}
A $N$-equivalence $\phi \colon (X,A) \to (Y,B)$ between pairs of gathered CW complexes of dimension less than $N$ is a homotopy equivalence.
A weak homotopy euivalence between pairs of gathered CW complexes.
\end{thm}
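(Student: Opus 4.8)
The plan is to derive the theorem from Proposition \ref{prp:main prp}, which asserts that for an $N$-equivalence $\phi$ the induced map $\phi_\ast \colon [Z,C;X,A] \to [Z,C;Y,B]$ is surjective whenever $\dim(Z,C) \leq N$ and injective whenever $\dim(Z,C) < N$. The standard strategy is to manufacture a homotopy inverse from the surjectivity statement and then verify the remaining composite using the injectivity statement, applying the proposition once with $(Z,C)=(Y,B)$ and once with $(Z,C)=(X,A)$.

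First I would treat the $N$-equivalence case, in which $\dim(X,A) < N$ and $\dim(Y,B) < N$. Taking $(Z,C) = (Y,B)$, surjectivity of $\phi_\ast \colon [Y,B;X,A] \to [Y,B;Y,B]$ (valid since $\dim(Y,B) \leq N$) produces a smooth map of pairs $\psi \colon (Y,B) \to (X,A)$ with $\phi \circ \psi \simeq 1_Y$ as maps of pairs. Composing this homotopy with $\phi$ on the right yields $\phi \circ \psi \circ \phi \simeq \phi$, so that $\phi_\ast[\psi \circ \phi] = [\phi] = \phi_\ast[1_X]$ in $[X,A;Y,B]$. Taking $(Z,C) = (X,A)$, injectivity of $\phi_\ast \colon [X,A;X,A] \to [X,A;Y,B]$ (valid since $\dim(X,A) < N$) then forces $[\psi \circ \phi] = [1_X]$, i.e.\ $\psi \circ \phi \simeq 1_X$. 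Since $\psi$ is a map of pairs and both composites are homotopic to the respective identities, $\psi$ is a homotopy inverse of $\phi$ and $\phi$ is a homotopy equivalence.

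For the weak homotopy equivalence statement I would use that, by Definition \ref{dfn:weak homotopy equivalence}, a weak homotopy equivalence is an $N$-equivalence for every $N$. When the complexes are finite-dimensional this reduces at once to the previous case by choosing $N$ larger than the dimensions involved. For possibly infinite-dimensional complexes one first upgrades Proposition \ref{prp:main prp} to the assertion that $\phi_\ast \colon [Z,C;X,A] \to [Z,C;Y,B]$ is a bijection for every pair of gathered CW complexes $(Z,C)$, with no dimension restriction. This is obtained by rerunning the skeletal induction of Proposition \ref{prp:main prp}, now continued over all skeleta: at the $n$-th stage one only invokes that $\phi$ is an $n$-equivalence, which holds for every $n$. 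With this dimension-free bijection in hand, the two-step argument above applies verbatim with $(Z,C) = (Y,B)$ and $(Z,C) = (X,A)$.

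The main obstacle is precisely this infinite-dimensional case: one must check that the maps and homotopies constructed cell by cell over the skeleta assemble into a genuinely smooth and tame map, and a smooth and tame homotopy of pairs, on the colimit $X = \colim_n X^n$. This is the same mechanism already used in Lemma \ref{lmm:HEP} and Theorem \ref{HELP}, where the subduction $\bigcup_n (j_n \times 1_I) \colon \coprod_n (X^n \times I) \to X \times I$ of Lemma \ref{subduction of final diffeology} permits one to define a smooth map out of the colimit by specifying it compatibly on each skeleton. That subduction property, together with the preservation of tameness at each inductive step, guarantees that the limiting data are honest smooth maps and tame homotopies of pairs, which completes the argument.
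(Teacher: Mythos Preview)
Your argument is correct and close in spirit to the paper's, but the key step differs. After obtaining $\psi$ with $\phi\circ\psi\simeq 1_Y$ from surjectivity of $\phi_\ast$, the paper does \emph{not} invoke the injectivity half of Proposition~\ref{prp:main prp}; instead it observes that $\psi$ is itself an $N$-equivalence (since $\phi_\ast\psi_\ast=\mathrm{id}$ on homotopy groups and $\phi_\ast$ is bijective in the relevant range), applies Proposition~\ref{prp:main prp} a second time to the surjection $\psi_\ast\colon[X,A;Y,B]\to[X,A;X,A]$ to produce some $\phi'$ with $\psi\circ\phi'\simeq 1_X$, and then shows $\phi'\simeq\phi$ via the chain $\phi'\simeq(\phi\circ\psi)\circ\phi'\simeq\phi\circ(\psi\circ\phi')\simeq\phi$. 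Your route, using injectivity of $\phi_\ast$ on $[X,A;X,A]\to[X,A;Y,B]$ directly, is equally valid and arguably tidier: it exercises both halves of Proposition~\ref{prp:main prp} and sidesteps the auxiliary check that $\psi$ is an $N$-equivalence. Your treatment of the infinite-dimensional case is also more explicit than the paper's, which essentially leaves that second sentence unargued; the extension of Proposition~\ref{prp:main prp} you sketch, via the skeletal induction and the subduction $\bigcup_n(j_n\times 1_I)$ used in Lemma~\ref{lmm:HEP} and Theorem~\ref{HELP}, is exactly the mechanism needed to close that gap.
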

\begin{proof}
By Proposition \ref{prp:main prp},
the induced map
\[
\phi_{\ast} \colon 
[Y, B; X,A] \to [Y,B;Y,B]
\]
is surjective.
Thus there exists a smooth map of pairs $\psi \colon (Y,B) \to (X,A)$ satisfying  $\phi \circ \psi \simeq 1_{Y}$.
Since $\phi$ is a $N$-equivalence,
so is $\psi$.
Hence the induced map 
\[
\psi_{\ast} \colon 
[X,A;Y,B] \to [X,A;X,A]
\]
is surjective by Proposition \ref{prp:main prp}.
Then there is a smooth map of pairs $\psi \colon (X,A) \to (Y,B)$ satisfying $\psi \circ \phi^{\prime} \simeq 1_{X}$.
We have
\[
\phi^{\prime} \simeq 1_{Y} \circ \phi^{\prime} \simeq \phi \circ \psi \circ \phi^{\prime} \simeq \phi \circ 1_{X} \simeq \phi.
\]
Hence $\phi$ is a homotopy equivalence since $\psi \circ \phi \simeq 1_{X}$ and $\phi \circ \psi \simeq 1_{Y}$ hold.
\end{proof}
%
%
%
%
\section{The cellular approximation theorem}
In this section we prove the cellular approximation theorem on relative gathered CW complexes.
\begin{dfn}\label{dfn:cellular map}
Let $(X,A)$ and $(Y,B)$ be smooth relative CW complexes.
Then a smooth map of pairs $f \colon (X,A) \to (Y,B)$ is said to be cellular if $f(X^{n}) \subset Y^{n}$ for all $n$.
\end{dfn}
\begin{thm}[Cellular approximation theorem]\label{thm:cellular approximation}
Let $f \colon (X,A) \to (Y,B)$ be a smooth map between relative gathered CW complexes.
Then there exists a cellular map $g \colon (X,A) \to (Y,B)$ such that $f$ and $g$ are homotopic relative to $A$.
\end{thm}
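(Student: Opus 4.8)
The plan is to induct over the skeleta of the source $(X,A)$, at each stage deforming $f$ across the newly attached $n$-cells so that their images are pushed down into $Y^{n}$, while keeping the map fixed on $A$ and compatible with the deformation already carried out on $X^{n-1}$. For the base case, note that $f$ is a map of pairs, so $f(X^{-1})=f(A)\subset B=Y^{-1}$; we may thus take $g^{-1}=f|A$ together with the constant homotopy. Assume inductively that we have a smooth $g^{n-1}\colon X^{n-1}\to Y$ with $g^{n-1}(X^{k})\subset Y^{k}$ for $k\le n-1$ and a tame homotopy $F^{n-1}\colon X^{n-1}\times I\to Y$ relative to $A$ from $f|X^{n-1}$ to $g^{n-1}$. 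Applying the homotopy extension property (Lemma \ref{lmm:HEP}) to the relative gathered CW complex $(X,X^{n-1})$, whose cells of dimension $\ge n$ inherit tame attaching maps, I would first extend $F^{n-1}$ to a tame homotopy on $X$ fixing $A$ and replace $f$ by its end map, which now agrees with $g^{n-1}$ on $X^{n-1}$. This reduces the inductive step to a purely local problem on the $n$-cells.

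For each $\lambda\in\Lambda_{n}$ with characteristic map $\Phi_{\lambda}\colon I^{n}\to X^{n}$ and attaching map $\phi_{\lambda}\colon\partial I^{n}\to X^{n-1}$, the composite $f\circ\Phi_{\lambda}$ now carries $\partial I^{n}$ into $Y^{n-1}$, hence is a smooth map of pairs $(I^{n},\partial I^{n})\to(Y,Y^{n-1})$. Because $TX$ and $TY$ carry the weak topology with respect to the images of their characteristic maps (Propositions \ref{topological CW complexes} and \ref{the weak topology induced smooth CW complexes}), the image $f\Phi_{\lambda}(I^{n})$ is compact and meets only finitely many open cells of $Y$. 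The heart of the argument is the claim that such a cube can be homotoped, relative to $\partial I^{n}$ and through tame homotopies, into $Y^{n}$. Granting this, the subduction property of $(i_{n}\times 1_{I})\bigcup_{\lambda}(\Phi_{\lambda}\times 1_{I})$ (Lemma \ref{subduction of final diffeology}) assembles the cellwise homotopies into a single tame homotopy on $X^{n}$ fixing $X^{n-1}$, and passing to the colimit over $n$ produces the global cellular map $g$ and the homotopy $f\simeq g$ relative to $A$.

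For the single-cube claim I would argue by downward induction on the largest dimension $m$ of an open cell of $Y$ met by $f\Phi_{\lambda}$, the case $m\le n$ being the desired conclusion. When $m>n$, precompose with the locally tame map $\mathbb{T}^{n}_{k,\sigma,\tau}$ of Lemma \ref{lmm:locally tame map}, which is homotopic to the identity and tame on each subcube $K_{J_{k}}$; choosing $k$ large enough that every subcube maps into a single chart, I restrict attention to the portion landing in a top cell $\Phi^{\prime}_{\mu}(\Int I^{m})$. That portion is a smooth map from a region of $I^{n}$ into the $m$-dimensional open cell with $n<m$, so Sard's theorem furnishes a point $p$ of the open cell outside its image. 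The radial deformation of $I^{m}\setminus\{p\}$ onto $\partial I^{m}$ is smooth and, being supported away from $p$, composes with the map to push it off the $m$-cell into $Y^{m-1}$; tameness of the restrictions together with Proposition \ref{extend of J} lets these local deformations be extended and glued, across the shared faces of the subcubes, into one tame homotopy relative to $\partial I^{n}$. Iterating lowers $m$ by one at each stage until the image lies in $Y^{n}$.

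I expect the main obstacle to be exactly this single-cube push-off carried out in the smooth category: unlike the topological case, where an arbitrary retraction of $I^{m}\setminus\{p\}$ onto its boundary is available, here both the deformation and its gluing across subcube boundaries must be smooth, which is precisely why the tame subdivision $\mathbb{T}^{n}_{k,\sigma,\tau}$ and the extension Proposition \ref{extend of J} are indispensable. A secondary delicate point is verifying compactness of the cellwise image in the $D$-topology, so that only finitely many cells intervene, and ensuring throughout that every homotopy produced is tame and fixes $A$, so that Lemma \ref{lmm:HEP} and the final-diffeology assembly of Lemma \ref{subduction of final diffeology} apply verbatim.
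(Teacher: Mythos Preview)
Your inductive skeleton matches the paper's, and the reduction to a single-cube push-down (the content of the paper's Lemma~\ref{lmm:cellular of cube}) is the right move. The gap lies inside that single-cube argument.

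The assertion that ``the radial deformation of $I^{m}\setminus\{p\}$ onto $\partial I^{m}$ is smooth'' is false: radial projection from an interior point onto the boundary of a cube is only piecewise smooth, breaking down along the preimages of the lower-dimensional faces of $\partial I^{m}$. This is precisely the obstruction that forces the paper to work with tame maps and \emph{approximate} retractions throughout (see the discussion preceding Definition~\ref{dfn:definition of tame maps} and Lemma~\ref{lmm:approximate retraction exists}); a strict smooth retraction $I^{m}\to\partial I^{m}$ of the kind you invoke is exactly what is unavailable. Even setting that aside, your push-off is performed independently on each subcube and each top cell, with the centre $p$ chosen separately in each case; nothing forces these deformations to agree on shared faces, and Proposition~\ref{extend of J} only extends a single tame map from $J^{n-1}$ to $I^{n}$---it does not reconcile two incompatible homotopies already defined on an entire face.

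The paper avoids both problems without Sard or radial projection. It first builds (Lemma~\ref{retraction lemma}) a $D$-open collar $B$ of the codimension-one skeleton in the target, together with a genuinely smooth deformation retraction onto that skeleton, manufactured from the tame maps $T^{n}_{\sigma,\tau}$. One then subdivides the source cube via $\mathbb{T}^{n}_{k,\sigma,\tau}$ so that each subcube lands either in $B$ or in the union of open top cells; on the union $M_{2}$ of the latter subcubes, the pair formed by the open top cells and their intersection with $B$ is $(m-1)$-connected, since each open $m$-cell contributes $\pi_{N}(D^{m},S^{m-1})\cong\pi_{N-1}(S^{m-1})=0$. Proposition~\ref{prp:homotopic} then produces a \emph{single} tame homotopy, relative to $M_{1}\cap M_{2}$, pushing $f_{1}|M_{2}$ into $B$, and composing with the smooth retraction of $B$ finishes the step. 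Thus compatibility across subcubes is handled by the $N$-connectedness machinery of Section~5 rather than by ad hoc gluing, and no non-smooth retraction is ever needed.
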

To prove theorem above,
we need the followings.
\begin{lmm}\label{retraction lemma}
Let $(X,A)$ be a relative gathered CW complex.
For each $n$,
there exists a $D$-open set $B^{n}$ of $X^{n}$ such that $X^{n-1}$ is a deformation retract of $B^{n}$.
\end{lmm}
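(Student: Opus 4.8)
The plan is to handle the single attaching step $X^{n-1}\to X^{n}$, building the neighbourhood and the retraction one $n$-cube at a time and then gluing along the defining pushout. I would take
\[
B^{n}=X^{n-1}\cup\textstyle\bigcup_{\lambda}\Phi_{\lambda}(I^{n}\setminus\{c\}),
\qquad c=(1/2,\dots,1/2),
\]
that is, $X^{n}$ with the image of every cell--centre deleted. First I would check that $B^{n}$ is $D$-open: by Proposition \ref{plots of smooth CW complexes} the diffeology of $X^{n}$ is generated by the characteristic maps, and $\Phi_{\lambda}^{-1}(B^{n})=I^{n}\setminus\{c\}$ for an $n$-cell while the preimage is the whole cube for every lower cell, so all preimages are open. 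I would also note that $B^{n}$ is itself the pushout of $X^{n-1}\leftarrow\coprod_{\lambda}\partial I^{n}\rightarrow\coprod_{\lambda}(I^{n}\setminus\{c\})$, so by Lemma \ref{subduction of final diffeology} and Corollary \ref{final diffeology and identity} the diffeologies of $B^{n}$ and of $B^{n}\times I$ are generated by $X^{n-1}$ together with the punctured characteristic maps.

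The heart of the argument is a single-cube construction. For each $\lambda$ I want a map $\eta_{\lambda}\colon(I^{n}\setminus\{c\})\times I\to I^{n}\setminus\{c\}$, \emph{smooth as a map into} $\mathbf{R}^{n}$, such that $\eta_{\lambda}(t,0)=t$, such that $\eta_{\lambda}(t,1)\in\partial I^{n}$ for every $t$, and such that $\eta_{\lambda}$ carries $\partial I^{n}\times I$ into $\partial I^{n}$. I would produce $\eta_{\lambda}$ by the smash-function method of Lemma \ref{lmm:approximate retraction exists}, arranging that $\eta_{\lambda}(\cdot,1)$ is a smooth approximate retraction of $I^{n}\setminus\{c\}$ onto $\partial I^{n}$ which equals the identity on the chamber $\partial I^{n}(\epsilon)$ and which, near the lower-dimensional faces, moves a point only in those coordinates lying within $\epsilon$ of $0$ or $1$. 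Granting such $\eta_{\lambda}$, I would then define $H\colon B^{n}\times I\to B^{n}$ by $H|_{X^{n-1}\times I}=\pr$ (the projection, i.e.\ the constant homotopy) and $H(\Phi_{\lambda}(t),s)=\Phi_{\lambda}(\eta_{\lambda}(t,s))$, and set $\gamma=H(\cdot,1)$.

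The only place where the gathered hypothesis enters is the verification that $H$ is well defined and is a homotopy rel $X^{n-1}$. On the overlap, for $t\in\partial I^{n}$ the two formulas agree precisely when $\Phi_{\lambda}(\eta_{\lambda}(t,s))=\Phi_{\lambda}(t)=\phi_{\lambda}(t)$, i.e.\ when $\phi_{\lambda}(\eta_{\lambda}(t,s))=\phi_{\lambda}(t)$ for all $s$. Since $\phi_{\lambda}$ is tame it is independent of every coordinate within $\epsilon$ of $0$ or $1$; as $\eta_{\lambda}$ moves a boundary point only in its extremal coordinates (and fixes $\partial I^{n}(\epsilon)$ outright), the value of $\phi_{\lambda}$ is constant along the homotopy. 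This is exactly what trades the nonexistent strict smooth retraction $I^{n}\to\partial I^{n}$ for a smooth approximate one: $\eta_{\lambda}(\cdot,1)$ need not fix $\partial I^{n}$ pointwise, only preserve the tame map $\phi_{\lambda}$. Smoothness of $H$ is then immediate, because on each generating piece $\Phi_{\lambda}\circ\eta_{\lambda}$ is a composite of the smooth map $\eta_{\lambda}$ with the smooth characteristic map $\Phi_{\lambda}$, and the gluing over the cells and over $X^{n-1}$ is legitimate since the folding map is a subduction by Lemma \ref{subduction of final diffeology}. Finally $\gamma$ lands in $X^{n-1}$ (as $\eta_{\lambda}(t,1)\in\partial I^{n}$), restricts to $\id$ on $X^{n-1}$, and $H$ exhibits $i\circ\gamma\simeq\id_{B^{n}}$ rel $X^{n-1}$.

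I expect the main obstacle to be the construction of $\eta_{\lambda}$ itself. Because $\partial I^{n}$ has corners there is no smooth strict retraction $I^{n}\setminus\{c\}\to\partial I^{n}$, so one must produce a smooth approximate retraction whose motion near the edges and vertices is confined to the extremal coordinates, do this compatibly in the $s$-direction so that $\eta_{\lambda}$ is jointly smooth, and keep the image off the centre $c$. This is the analogue, for the full boundary $\partial I^{n}$ rather than the horn $J^{n-1}$, of Lemma \ref{lmm:approximate retraction exists}, and I would build it by the same iterated use of the functions $T_{\sigma,\tau}$; once it is in hand, the remainder is the formal pushout bookkeeping above.
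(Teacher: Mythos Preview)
Your overall architecture --- build the homotopy cell by cell, glue via the pushout/subduction, and use tameness of $\phi_\lambda$ to make the pieces agree over $\partial I^n$ --- is exactly what the paper does. The difference is the choice of $B^n$ and hence of the cube-level homotopy. The paper does not remove only the centre; it removes the whole $\sigma_\lambda$-chamber $I^n(\sigma_\lambda)=[\sigma_\lambda,1-\sigma_\lambda]^n$ from each $n$-cell, taking $B^n=X^{n-1}\cup\bigcup_\lambda\Phi_\lambda\bigl(I^n\setminus I^n(\sigma_\lambda)\bigr)$ with $\sigma_\lambda<\tau_\lambda$. On this smaller collar the cube-level homotopy is a one-line formula,
\[
F_\lambda(t,s)=\Phi_\lambda\bigl((1-T_{\epsilon,1/2}(s))\,t+T_{\epsilon,1/2}(s)\,T^n_{\sigma_\lambda,\tau_\lambda}(t)\bigr),
\]
whose endpoint $\Phi_\lambda\circ T^n_{\sigma_\lambda,\tau_\lambda}$ already lands in $X^{n-1}$: every point of $I^n\setminus I^n(\sigma_\lambda)$ has some coordinate $\leq\sigma_\lambda$ or $\geq 1-\sigma_\lambda$, and $T^n_{\sigma_\lambda,\tau_\lambda}$ sends such points into $\partial I^n$. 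Tameness of $\phi_\lambda$ makes this rel $\partial I^n$ by precisely the mechanism you describe. So the paper's choice of $B^n$ turns what you flag as the main obstacle into a triviality.

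Your larger $B^n$ forces you to build a smooth map $\eta_\lambda(\cdot,1)\colon I^n\setminus\{c\}\to\partial I^n$, and this is \emph{not} the same problem as Lemma~\ref{lmm:approximate retraction exists}. The horn $J^{n-1}$ is contractible, so $R^n_\epsilon$ can be defined on all of $I^n$; the full boundary $\partial I^n\simeq S^{n-1}$ is not, so your map must genuinely exploit the puncture, and no single $T^n_{\sigma,\tau}$ will work (each such map only collapses the complement of a fixed sub-cube around the centre). One can build $\eta_\lambda$ --- for instance, first deformation-retract $I^n\setminus\{c\}$ onto a collar $I^n\setminus I^n(\sigma)$ and then apply the paper's straight-line formula --- but ``the same iterated use of $T_{\sigma,\tau}$'' is not an adequate description of this step, and as written the proposal leaves its self-declared main obstacle unresolved. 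The cleanest fix is to shrink $B^n$ as the paper does; then $\eta_\lambda$ is simply the linear homotopy from $\id$ to $T^n_{\sigma_\lambda,\tau_\lambda}$ and nothing further needs to be constructed.
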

\begin{proof}
Suppose $X^{n}$ is given by the following pushout
\[
\xymatrix{
\coprod_{\lambda \in \Lambda_{n}} \partial I^{n}
\ar[r]^(0.6){\bigcup_{\lambda} \phi_{\lambda}}
\ar[d]
&
X^{n-1}
\ar[d]^{i_{n}}
\\
\coprod_{\lambda \in \Lambda_{n}} I^{n}
\ar[r]_(0.6){\bigcup_{\lambda} \Phi_{\lambda}}
&
X^{n},
}
\]
where $\phi_{\lambda}$ is $\tau_{\lambda}$-tame homotopy.
Let us define a tame homotopy $F_{\lambda} \colon I^{n} \times I \to X^{n}$ relative to $\partial I^{n}$ by
\[
F_{\lambda}(t,s)=
\Phi_{\lambda}((1-T_{\epsilon, 1/2}(s))t+T_{\epsilon, 1/2}(s)T^{n}_{\sigma_{\lambda},\tau_{\lambda}}(t)).
\]
By Lemma \ref{subduction of final diffeology},
we have an $\epsilon$-tame homotopy $\tilde{F}_{\lambda} \colon X^{n} \times I \to X^{n}$ relative to $X^{n-1}$ such that 
the following diagram commutes
\[
\xymatrix@C=45pt{
(X^{n-1} \times I) \coprod_{\lambda \in \Lambda_{n}} (I^{n} \times I)
\ar[r]^(0.75){i \bigcup F_{\lambda}}
\ar[d]_{(i_{n} \times 1_{I}) \bigcup_{\lambda} (\Phi_{\lambda} \times 1_{I})}
&
X^{n}
\\
X^{n} \times I,
\ar@{.>}[ru]_{\exists \tilde{F}_{\lambda}}
}
\]
whrere $i \colon X^{n-1} \times I \to X^{n}$ is given by $i(x,t)=x$.
Let $B^{n}$ be the $D$-open set of $X^{n}$ given by the adjunction space $X^{n-1} \cup_{\phi_{\lambda}} K_{\lambda}$,
where $K_{\lambda}=I^{n} \setminus I^{n}(\sigma_{\lambda})$  
Then we have a retraction $\gamma^{n} \colon B^{n} \to X^{n-1}$ given by $\gamma^{n}(x)=\tilde{F}_{\lambda}(x,1)$.
Clearly,
$\gamma^{n} \circ i_{n}=1_{X^{n-1}}$ and $i_{n} \circ \gamma^{n} \simeq 1_{B^{n}}$ hold.
\end{proof}
\begin{lmm}\label{lmm:cellular of cube}
Let $(X^{n},A)$ be a $n$-dimensional relative gathered CW complex.
Let $n>m$ and let $f \colon I^{m} \to X^{n}$ be a smooth map such that restriction $f|\partial I^{m}$ is tame and $f(\partial I^{m}) \subset X^{m-1}$ holds.
Then there exists a smooth map $f^{\prime} \colon I^{m} \to X^{n}$ satisfying $f \simeq f^{\prime}$ rel $\partial I^{m}$ and $f^{\prime}(I^{m}) \subset X^{n-1}$.
\end{lmm}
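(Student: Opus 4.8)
The plan is to push the image of \(f\) off the open \(n\)-cells in two moves: first homotope \(f\), relative to \(\partial I^{m}\), to a map whose image avoids the closed chambers of all top cells and hence lands in the set \(B^{n}\) of Lemma \ref{retraction lemma}; then apply the deformation retraction \(\gamma^{n}\colon B^{n}\to X^{n-1}\) supplied by that lemma. Since \(f(\partial I^{m})\subset X^{m-1}\subset X^{n-1}\subset B^{n}\) already, and every homotopy produced below will fix \(X^{n-1}\) pointwise, the composite homotopy will automatically be relative to \(\partial I^{m}\). As \(f(I^{m})\) is compact in the \(D\)-topology and the open \(n\)-cells \(\Phi_{\lambda}(\Int I^{n})\) are pairwise disjoint and cover \(X^{n}\setminus X^{n-1}\), only finitely many cells \(\lambda_{1},\dots,\lambda_{r}\) meet the image, so it suffices to clear each of these one at a time.

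For a single such cell \(\lambda\), write \(g_{\lambda}=\Phi_{\lambda}^{-1}\circ f\) on the open set \(U_{\lambda}=f^{-1}(\Phi_{\lambda}(\Int I^{n}))\subset I^{m}\); this is a genuinely smooth map into \(\Int I^{n}\), and because \(m<n\) its image has \(n\)-dimensional measure zero. In particular \(g_{\lambda}\) cannot cover the open chamber \(\Int I^{n}(\sigma_{\lambda})\), so it misses points there. To turn ``missing points'' into ``missing the whole closed chamber'' I would subdivide the domain using \(\mathbb{T}^{m}_{k,\sigma,\tau}\) of Lemma \ref{lmm:locally tame map} with \(k\) large: on the compact image, uniform continuity forces \(f\) to carry each subcube \(K_{J_{k}}\) either into a neighborhood of \(X^{n-1}\) or into a set of small diameter inside a single cell, and the induced local tameness makes \(f\) locally constant near the subcube boundaries. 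On each subcube whose small image lies in the closed chamber, a missed point \(c\) lies close to that image, leaving enough room to push the image radially from \(c\) past \(\partial I^{n}(\sigma_{\lambda})\) by an expansion supported in a ball disjoint from \(\partial I^{n}\); the local constancy along subcube boundaries lets these local pushes agree where subcubes meet.

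Assembling these per-cell homotopies, defined on each \(I^{n}\times I\), equal to the identity on \(\partial I^{n}\times I\), and extended constantly over \(X^{n-1}\times I\), I would invoke the pushout together with Lemma \ref{subduction of final diffeology} (exactly as in Lemma \ref{lmm:HEP}) to obtain a single smooth homotopy \(H\colon X^{n}\times I\to X^{n}\), relative to \(X^{n-1}\), from the identity to a map whose composition \(f_{1}=H(\,\cdot\,,1)\circ f\) satisfies \(f_{1}(I^{m})\subset B^{n}\). Finally, composing \(f_{1}\) with the deformation-retraction homotopy \(i_{n}\circ\gamma^{n}\simeq 1_{B^{n}}\) of Lemma \ref{retraction lemma}, which is itself relative to \(X^{n-1}\), yields \(f'=\gamma^{n}\circ f_{1}\) with \(f'(I^{m})\subset X^{n-1}\) and \(f\simeq f_{1}\simeq f'\) relative to \(\partial I^{m}\), as required.

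The main obstacle is the chamber-clearing homotopy of the second and third paragraphs, not its two bookkeeping endpoints. A single radial projection from one interior point cannot clear an entire closed chamber while fixing a neighborhood of \(\partial I^{n}\), because the chamber is too large relative to the room available near a missed point; this is precisely the smooth analogue of the failure of the retraction \(I^{n}\to\partial I^{n}\). The device that rescues the argument is the subdivision by \(\mathbb{T}^{m}_{k,\sigma,\tau}\), which confines each local image to a small region where a boundary-supported radial push has room to clear the chamber, and whose induced tameness guarantees that the resulting local homotopies are smooth and glue along shared faces, just as tameness is used to glue the extensions in Proposition \ref{extend of J} and Lemma \ref{prp:smooth retraction}. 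Verifying this smooth assembly, and checking that the expansions can be kept away from \(\partial I^{n}\) so that \(X^{n-1}\) is fixed throughout, is the technical heart of the proof.
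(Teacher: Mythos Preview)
Your outline shares the paper's architecture: invoke Lemma~\ref{retraction lemma} for the open set $B^{n}\supset X^{n-1}$ and the retraction $\gamma^{n}$, subdivide the domain so that each subcube maps into $B^{n}$ or into $X^{n}\setminus X^{n-1}$, homotope the map so its image lies entirely in $B^{n}$, then retract via $\gamma^{n}$. The two arguments part ways at the middle step.

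You attempt the classical geometric route---a Sard-type missed-point argument followed by local radial expansions---and you rightly flag the smooth gluing of these local pushes as the unfinished ``technical heart.'' There is also an internal tension in your sketch: the radial centres depend on the \emph{domain} subcube, so they cannot be assembled into a single target-side self-homotopy $H\colon X^{n}\times I\to X^{n}$ as your third paragraph proposes; you are forced to build a domain-side homotopy $I^{m}\times I\to X^{n}$ and verify smoothness across subcube faces directly. Moreover, once you precompose with $\mathbb{T}^{m}_{k,\sigma,\tau}$ to obtain local tameness, the homotopy $f\simeq f\circ\mathbb{T}^{m}_{k,\sigma,\tau}$ moves points of $\partial I^{m}$ within $\partial I^{m}$, so the ``automatically rel $\partial I^{m}$'' claim does not hold without a further correction.

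The paper sidesteps both issues. After the same precomposition with $\mathbb{T}^{m}_{k,\sigma,\tau}$ and the formation of the subcomplexes $M_{1},M_{2}\subset I^{m}$, it observes that for each open $n$-cell
\[
\pi_{N}\bigl(\Phi_{\lambda}(\Int I^{n}),\,B^{n}\cap\Phi_{\lambda}(\Int I^{n})\bigr)\;\cong\;\pi_{N}(D^{n},S^{n-1})\;\cong\;\pi_{N-1}(S^{n-1})=0\qquad(N\le n-1),
\]
so the pair $\bigl(X^{n}\setminus X^{n-1},\,B^{n}\cap(X^{n}\setminus X^{n-1})\bigr)$ is $(n-1)$-connected, and then applies Proposition~\ref{prp:homotopic} to the relative gathered complex $(M_{2},M_{1}\cap M_{2})$ to homotope $f_{1}|_{M_{2}}$ into $B^{n}$ rel $M_{1}\cap M_{2}$. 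All of the delicate smooth-gluing work your sketch leaves open is thereby delegated to the already-established Proposition~\ref{prp:homotopic}. Finally, since the intermediate homotopies are only of pairs, the paper invokes Theorem~\ref{HEP2} at the end to upgrade to a homotopy rel $\partial I^{m}$.
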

\begin{proof}
By Lemma \ref{retraction lemma},
there exists a $D$-open set $B^{n}$ of $X^{n}$ such that $X^{n-1}$ is a deformation retract of $B^{n}$.
Then $\{f^{-1}(B^{n}),f^{-1}(X^{n} \setminus X^{n-1}) \}$ is an open cover of $I^{m}$.
By taking $k$ large enough,
we can cubically subdivide $I^{n}$ into subcubes $K_{J_{k}}$ such that $f(K_{J_{k}})$ is contained in either $B^{n}$ or $X^{n} \setminus X^{n-1}$.
Let us denote by $f_{1}$ the composition $f \circ \mathbb{T}^{n}_{k, \sigma, \tau}$.
Then we have the following conditions.
\begin{enumerate}
\item
The restriction $f_{1}|K_{J_{k}}$ is tame for each $J_{k}$.
\item
$f$ and $f_{1}$ are homotopic.
\item
$f_{1}(K_{J_{k}})$ is contained in either $B^{n}$ or $X^{n} \setminus X^{n-1}$.
\end{enumerate}
Let $M_{1}$ and $M_{2}$ be the unions of subcubes 
$K_{J_{k}}$ such that $f_{1}(K_{J_{k}}) \subset B^{n}$ and $f_{1}(K_{J_{k}}) \subset X^{n} \setminus X^{n-1}$,
respectively.
Then $M_{1}$ and $M_{2}$ are subcomplexes of $I^{m}$ such that $M_{1} \cup M_{2}=I^{m}$
For each $N \leq n-1$ and for each $\lambda \in \Lambda_{n}$,
we have
\[
\pi_{N}(\Phi_{\lambda}(\mbox{Int}I^{n}), B^{n} \cap \Phi_{\lambda}(\mbox{Int}I^{n})) \cong \pi_{N}(D^{n},S^{n-1}) \cong \pi_{N-1}(S^{n-1})=0,
\]
where $\Phi_{\lambda} \colon I^{n} \to X^{n}$ is a characteristic map.
Hence a pair $$(X^{n} \setminus X^{n-1}, B^{n} \cap (X^{n} \setminus X^{n-1}))$$ is $(n-1)$-connected.
By Proposition \ref{prp:homotopic},
there are a smooth map of pairs
\[
f_{2} \colon (M_{2},M_{1} \cap M_{2}) \to (X^{n} \setminus X^{n-1},B^{n} \cap (X^{n} \setminus X^{n-1}))
\] 
and a tame homotopy of pairs $F \colon M_{2} \times I \to X^{n} \setminus X^{n-1}$ relative to $M_{1} \cap M_{2}$
such that $f_{2}(M_{2}) \subset B^{n} \cap (X^{n} \setminus X^{n-1})$,
$F(t,0)=f_{1}(t)$ and $F(t,1)=f_{2}(t)$ hold.
Let $F^{\prime} \colon I^{m} \times I \to X^{n}$ be a tame homotopy defined by
\[
F^{\prime}(t,s)=
\left\{
\begin{array}{lll}
f_{1}(t), & (t,s) \in M_{1} \times I \\
F(t,s), & (t,s) \in M_{2} \times I.
\end{array}
\right.
\]
and let $f_{3} \colon I^{m} \to X^{n}$ be a smooth map given by $f_{3}(t)=F^{\prime}(t,1)$.
Since $f_{3}(I^{m}) \subset B^{n}$ holds,
we have
$
f \simeq f_{1} \simeq f_{3} \simeq \gamma^{n} \circ f_{3}
$
by Lemma \ref{retraction lemma}.
Hence we have a smooth map $f^{\prime} \colon I^{m} \to X^{n}$ satisfying $f^{\prime}(I^{m}) \subset X^{n-1}$ and 
$f \simeq f^{\prime}$ rel $\partial I^{n}$ by Theorem \ref{HEP2}.
\end{proof}
We are now ready to prove the cellular approximation theorem. 
\begin{proof}[\it Proof of Theorem \ref{thm:cellular approximation}]
Since $X^{0} \setminus A$ is discrete,
we can construct a tame homotopy $F^{0} \colon X^{0} \times I \to Y$ relative to $A$ satisfying $F^{0}(x,0)=f(x)$ and 
$F^{0}(x,1) \in Y^{0}$ by Lemma \ref{lmm:cellular of cube}.
Suppose we shall inductively construct a cellular map $g^{n-1} \to Y$ and a tame homotopy $F^{n-1} \colon f|X^{n-1} \simeq g^{n-1}$ 
satisfying $F^{n-1}|X^{n-2} \times I= F^{n-2}$.
For each $\lambda \in \Lambda_{n}$,
let $\phi_{\lambda} \colon \partial I^{n} \to X^{n-1}$ and $\Phi_{\lambda} \colon I^{n} \to X^{n}$ be attaching and characteristic maps,
respectively.
Let us define a smooth map $F_{\lambda} \colon L^{n} \to Y$ by putting
\[
F_{\lambda}(t,s)=
\left\{
\begin{array}{lll}
F^{n-1}(\phi_{\lambda}(t),s),
&
(t,s) \in \partial I^{n} \times I
\\
f(\Phi_{\lambda}(t)),
&
(t,s) \in I^{n} \times \{0\}.
\end{array}
\right.
\]
We have a $\tau$-tame homotopy $F^{\prime}_{\lambda} \colon I^{n} \times I \to Y$ extending $F_{\lambda}$ 
by Proposition \ref{prp:smooth retraction}.
Let $g_{\lambda} \colon (I^{n}, \partial I^{n}) \to (Y, Y^{n-1})$ be a smooth map of pairs given by $g_{\lambda}(t)=F^{\prime}_{\lambda}(t,1)$ for $t \in I^{n}$.
Since it satisfies $g_{\lambda}(I^{n}) \subset Y^{m}$ for some $m \geq n-1$,
there are a smooth map of pairs $g^{\prime}_{\lambda} \colon (I^{n}, \partial I^{n}) \to (Y,Y^{n-1})$ satisfying $g^{\prime}_{\lambda}(I^{n}) \subset Y^{n-1}$
and a tame homotopy of pairs $G_{\lambda} \colon g_{\lambda} \simeq g^{\prime}_{\lambda}$ relative to $\partial I^{n}$ by Lemma \ref{lmm:cellular of cube}.
A tame homotopy $\tilde{F}_{\lambda} \colon I^{n} \times I \to Y$ given by the formula
\[
\tilde{F}_{\lambda}(t,s)=
\left\{
\begin{array}{lll}
F^{\prime}_{\lambda}(t,s),
&
0 \leq s \leq ({2-\tau})/{2}
\\
G_{\lambda}(t,(2s-2+\tau)/\tau),
&
({2-\tau})/{2} \leq s \leq 1
\end{array}
\right.
\]
satisfies $\tilde{F}_{\lambda}(t,1)=g^{\prime}_{\lambda}(t)$ and $\tilde{F}_{\lambda}|L^{n}=F_{\lambda}$.
By Lemma \ref{subduction of final diffeology},
there exists a tame homotopy $F^{n} \colon X^{n} \times I \to Y$ such that the following diagram commutes
\[
\xymatrix@C=60pt{%
(X^{n-1} \times I)\coprod_{\lambda \in \Lambda_{n}} (I^{n} \times I)
\ar[r]^-{F^{n-1} \bigcup_{\lambda} \tilde{F}_{\lambda}} 
\ar[d]_-{(i_{n} \times 1_{I}) \bigcup_{\lambda}( \Phi_{\lambda} \times 1_{I}) } & Y
\\
X^{n} \times I
\ar@{.>}[ru]_-{\exists {F}^{n}}. }%
\]
Hence we have a cellular map of pairs $g \colon (X,A) \to (Y,B)$ and a tame homotopy $F \colon X \times I \to Y$ between $f$ and $g$ relative to $A$ given by 
$g|X^{n}=g^{n}$ and $F|X^{n} \times I=F^{n}$,
respectively.
\end{proof}
%
%
%
%
%
%

\end{document}